\documentclass[10pt]{article}
\usepackage{amssymb}
\usepackage{color}
\usepackage{relsize}
\usepackage{booktabs}
\setlength{\oddsidemargin}{0mm} \setlength{\evensidemargin}{0mm}
\setlength{\topmargin}{-15mm} \setlength{\textheight}{225mm}
\setlength{\textwidth}{155mm}
\usepackage{amsthm}
\usepackage{amsmath}
\usepackage{amsfonts}
\usepackage{enumerate}

\usepackage{graphicx}
\usepackage{float}
\usepackage{subfigure}
\usepackage{soul}
\usepackage{url}
\usepackage{color}

\newtheorem{thm}{Theorem}[section]
\newtheorem{cor}[thm]{Corollary}
\newtheorem{lem}[thm]{Lemma}
\newtheorem{prop}[thm]{Proposition}

\theoremstyle{definition}
\newtheorem{defn}[thm]{Definition}
\newtheorem{rem}[thm]{Remark}
\newtheorem{assu}[thm]{Assumption}

\numberwithin{equation}{section}


\theoremstyle{definition}

\theoremstyle{remark}

\usepackage{fancyhdr}
\pagestyle{fancy}
\makeatletter
\let\runauthor\@author
\let\runtitle\@title
\makeatother
\lhead{Z. Lei, L. Shi, and C. Zeng}
\rhead{Solving PDEs via Deep Neural Networks}

\begin{document}
	\title{Solving Parametric Partial Differential Equations with Deep Rectified Quadratic Unit Neural Networks}
	\author{Zhen Lei \footnotemark[1] \footnotemark[2] \and Lei Shi \footnotemark[1] \footnotemark[3] \and Chenyu Zeng \footnotemark[1] \footnotemark[4]
	}
	\renewcommand{\thefootnote}{\fnsymbol{footnote}}
	\footnotetext[1]{School of Mathematical Sciences; LMNS and Shanghai
		Key Laboratory for Contemporary Applied Mathematics, Fudan University, Shanghai 200433, P. R. China.}
	\footnotetext[2]{Email: zlei@fudan.edu.cn}
	\footnotetext[3]{Email: leishi@fudan.edu.cn}
	\footnotetext[4]{Correspondence author. Email: cyzeng19@fudan.edu.cn}
	\date{\today}
	\maketitle
	
	\begin{abstract}
		Implementing deep neural networks for learning the solution maps of parametric partial differential equations (PDEs) turns out to be more efficient than using many conventional numerical methods. However, limited theoretical analyses have been conducted on this approach. In this study, we investigate the expressive power of deep rectified quadratic unit (ReQU) neural networks for approximating the solution maps of parametric PDEs. The proposed approach is motivated by the recent important work of G. Kutyniok, P. Petersen, M. Raslan and R. Schneider (Gitta Kutyniok, Philipp Petersen, Mones Raslan, and Reinhold Schneider. A theoretical analysis of deep neural networks and parametric pdes. Constructive Approximation, pages 1-53, 2021), which uses deep rectified linear unit (ReLU) neural networks for solving parametric PDEs. In contrast to the previously established complexity-bound $\mathcal{O}\left(d^3\log_{2}^{q}(1/ \epsilon) \right)$ for ReLU neural networks, we derive an upper bound $\mathcal{O}\left(d^3\log_{2}^{q}\log_{2}(1/ \epsilon) \right)$ on the size of the deep ReQU neural network required to achieve accuracy $\epsilon>0$, where $d$ is the dimension of reduced basis representing the solutions. Our method takes full advantage of the inherent low-dimensionality of the solution manifolds and better approximation performance of deep ReQU neural networks. Numerical experiments are performed to verify our theoretical result.
	\end{abstract}
	
	\textbf{Keywords:} Deep neural network; Solution maps of parametric PDE; ReQU activation function; Reduced basis method; Complexity bounds.

	\maketitle
	



	\section{Introduction}	
	Solving partial differential equations (PDEs) numerically has attracted considerable research attention because of its potential application in many science and engineering problems. Conventional numerical methods, such as finite element and finite difference methods, are primarily used for solving low-dimensional PDEs. In these methods, an equation is solved by discretizing the domain, which could be sufficiently accurate with fine grids. However, conventional methods are often limited by the curse of dimensionality, as they approximate the function values of the solution on grid points which scale exponentially with the input dimension. On the other hand, in most applications, the concerned PDE typically depends on several parameters that describe the physical or geometrical constraints of the equation. Under certain circumstances, PDEs are solved repeatedly for various values of these parameters. Thus, solving PDEs using conventional methods could be time consuming.
	
	Because of the expressive power of neural networks, considerable advancements have been achieved in using deep learning methods for solving PDEs. A flourishing line of research leverages neural networks to approximate the solutions of PDEs directly (see, e.g. \cite{han2018solving,sirignano2018dgm}), producing efficient numerical methods which outperform the conventional methods in solving high-dimensional PDEs. However, this line of work still aims to build one solution from a parameter set to a state space and thus cannot avoid the trouble of retraining for different parameters. To address this problem, studies (\cite{bhattacharya2021model,DALSANTO2020109550,khoo_lu_ying_2021, li2020fourier, lu2019deeponet}) have proposed the use of neural networks for recovering the solution maps. This approach can be used to solve an entire family of PDEs. However, the theoretical foundations, including the required network size, for this approach have not yet been comprehensively investigated.
	
	In the recent important work \cite{kutyniok2021theoretical}, the authors have made a significant  breakthrough, showing that an efficient approximation of the solution map can be realized using deep neural networks and the low dimensionality of the solution manifold. The following parametric PDEs have been considered:
	\begin{equation}\label{PPDE}
		B_{y}(u_y,v)=f_y(v) \quad \text{~for all~} y\in \mathcal{Y}, v\in H,
	\end{equation}
	where $B_{y}:H\times H\to \mathbb{R}$ is a continuous bilinear form, $\mathcal{Y}$ is the set of parameters, and $H$ is a Hilbert space. The authors of \cite{kutyniok2021theoretical} focused on approximating the solution map $\mathcal{Y}\ni y \mapsto  u_y \in H$. Based on the Galerkin method and the compactness of parameter set $\mathcal{Y}$, one can assume that there exists a basis of a high-fidelity discretization of $H$ which may potentially be quite large and the corresponding  high-fidelity solutions $u^{\mathrm{h}}_y$. Combined with the reduced basis theory, the high-fidelity space could be replaced by a considerably smaller space of dimension $d$, that is, the reduced basis space. And it suffices to approximate the reduced  basis solution $u^{\mathrm{rb}}_y$. In terms of the coefficient vector, this approach transformed the original problem into solving a linear system of equations, which are presented in Section \ref{RBS}. The main contribution of \cite{kutyniok2021theoretical} is that they constructed a neural network equipped with the rectified linear unit (ReLU) activation function of size $\mathcal{O}(d^3\log^q_2(1/\epsilon))$ to approximate the matrix inversion operator up to accuracy $\epsilon>0$, where $d$ is the dimension of reduced basis representing the solutions and the ReLU activation function is defined as $x \to \max\{0,x\}$. Furthermore, they obtained the upper bounds of the complexity of using deep ReLU neural networks for approximating the solution maps of parametric PDEs. Since then, it remains a natural question whether this complexity bound could be further improved when some other neural network architectures are adopted.
	
	In this study, we propose the use of deep neural networks equipped with the rectified quadratic unit (ReQU) activation function to approximate the solution maps of parametric PDEs. Here, ReQU is defined as $x \to \max\{0,x^2\}.$ Compared with the approximation by deep ReLU neural networks, we obtain a better complexity estimate of order $\mathcal{O}\left(d^3\log_{2}^{q}\log_{2}(1/ \epsilon) \right)$ when solving the parametric PDEs with $\epsilon$-accuracy using deep ReQU neural networks. 
	
	The ReQU is considered as the activation function because as indicated in \cite{li2019better}, ReQU neural networks exhibit superior approximation of smooth functions, such as polynomials. This is achieved by the fact that functions $x, x^2$, and $xy$ can be exactly represented using ReQU neural networks with less nodes and nonzero weights, and without restriction on the norm of $x$ and $y$. Based on these properties, we propose using ReQU neural networks to represent matrix polynomials, which contributes to an approximation of the inversion operator. A key difficulty in implementing our strategy is deriving a ReQU-based neural network calculus. We overcome this problem using a novel construction of identity function and complexity estimates of operations of neural networks equipped with the ReQU activation function.
	
	Furthermore, another important ingredient of our study is the inherent low dimensionality of the solution manifold. This forms the foundation of reduced basis method \cite{hesthaven2016certified,kutyniok2021theoretical,quarteroni2015reduced}, where the parameter-dependent solutions are obtained in a low-dimensional space constructed using the snapshots of high-dimensional discrete solutions for selected parameters. Typically, this reduced space is built up in advance in a costly offline phase, and is suited to approximate all solutions for parameters stemming from a given parameter domain during the so-called online phase. We refer to \cite{quarteroni2015reduced} for an extensive survey of works on reduced basis methods.
	
	Numerous theoretical analysis based on neural networks have been proposed for solving parametric PDEs. The work \cite{schwab2019deep} was one of the first to present a method for analyzing neural network approximation rates for solutions of parametric PDEs. In their approach, the analyticity of the solution map and polynomial chaos expansion were used to approximate the solution map. In \cite{kovachki2021universal,lanthaler2021error}, detailed error estimates of neural networks were obtained for Fourier neural operators and DeepONets. We also mention that ReQU neural networks have already been used in \cite{chui2018deep, mhaskar1996neural, mhaskar1993approximation}) to provide a theoretical upper bound on deep neural network function approximations, which was improved by \cite{li2019better,tang2019chebnet} for smooth functions and in \cite{opschoor2021exponential} for holomorphic maps in high dimension. The ReQU neural network has been adapted to solve PDEs in \cite{duan2021convergence, he2018relu}. To the best of our knowledge, this study is the first to use ReQU neural networks for solving parametric PDEs.
	
	The rest of this paper is organized as follows. In Section 2, we present the framework of the parametric PDEs problem, and briefly describe the theory of reduced basis method and rectified power units (RePU) activation function. In Section 3, we introduce the ReQU-based neural network calculus, which is the basis for assembling complex neural networks. In Section 4, we construct the ReQU neural network for mapping a matrix to its approximate inverse, and prove its upper bound of complexity. We describe the final approximate solution to parametric PDEs constructed by ReQU neural networks and the corresponding complexity estimates in Section 5. In Section 6, we detail some numerical experiments to support the theoretical analysis results.
	
	\section{Preliminary}\label{section2}
	\subsection{Well-Posedness Theory of Parametric PDEs}
	
	This subsection introduces the assumptions that guarantee the well-posedness of parametric PDEs. The parametric PDEs are given in \eqref{PPDE} with a continuous bilinear form $B_{y}:H\times H\to \mathbb{R}$ and a set of parameters denoted by $\mathcal{Y}.$ Here, $H$ is a Hilbert space with norm $\|\cdot\|_{H}$, and $\mathcal{Y}$ is a compact subset of $\mathbb{R}^{p}$ with a fixed, but potentially large,  $p\in \mathbb{N}$.
	\begin{assu}\label{assu}
		The bilinear form $B_{y}$ satisfies the following conditions: 
		\begin{enumerate}[(i)]
			\item $
			B_{y}(u, v)=B_{y}(v, u), \quad \text { for all } u, v \in H, y\in \mathcal{Y}.$
			\item There exists a constant $\alpha>0$ such that $$
			\left|B_{y}(u, v)\right| \leq \alpha\|u\|_{H}\|v\|_{H}, \quad \text { for all } u \in H, v \in H, y \in \mathcal{Y}.
			$$
			\item  There exists a constant $\beta>0$ such that
			$$
			B_{y}(u, u) \geq \beta \|u\|_{H}^{2}, \quad \text { for all } u \in H, y \in \mathcal{Y}.
			$$
		\end{enumerate} 
		Moreover, we assume that there exists a constant $C_f>0$ such that
		$$
		\left\|f_{y}\right\|_{H} \leq C_f, \quad \text { for all } y \in \mathcal{Y},$$
		and the solution manifold 
		$$
		S(\mathcal{Y}):=\left\{u_{y}: u_{y} \text { is the solution of }\eqref{PPDE}, y \in \mathcal{Y}\right\}
		$$
		is compact in $H$. 
	\end{assu}
	
	Next, based on the Lax--Milgram lemma \cite{MR2597943}, the parametric PDE \eqref{PPDE} is well-posed; that is, for every $y \in \mathcal{Y}$ and every $f_{y} \in H$, there exists exactly one $u_{y} \in H$ such that \eqref{PPDE} is satisfied, and $u_{y}$ depends continuously on $f_{y}$.
	
	Parametric PDEs of form \eqref{PPDE} are widely used throughout engineering and applied sciences as models for unsteady and steady heat and mass transfer, acoustics, solid and fluid mechanics, electromagnetics, or finance problems \cite{hesthaven2016certified}. The parameters in $\mathcal{Y}$ are used to characterize a particular equation and possible variations in its geometric configuration, physical properties, boundary conditions, or source terms. 
	
	Typical examples of parametric PDEs include the boundary-value problem of linear parametric elliptic PDE with variable coefficients
	\begin{align}\label{BV}
		\begin{cases}
			-\sum_{i, j=1}^{n}\left(a_y^{i j}(x) u_{x_{i}}\right)_{x_{j}}+\sum_{i=1}^{n} b_y^{i}(x) u_{x_{i}}+c_y(x) u =f_y, & \text{~in~} U,\\
			u=0 & \text{~on~} \partial U,
		\end{cases}
	\end{align}
	where $U$ is an open bounded subset of $\mathbb{R}^n$.  We assume that, for every fixed $y\in \mathcal{Y}$, there exists a constant $\theta_y>0$ such that
	$$
	\sum_{i, j=1}^{n} a_y^{i j}(x) \xi_{i} \xi_{j} \geq \theta_y|\xi|^{2}
	$$
	for a.e. $x \in U$ and all $\xi \in \mathbb{R}^{n}$. 
	The bilinear form $B_y$ associated with \eqref{BV} is expressed as follows:
	\begin{align}\label{eq-exm-B}
		B_y(u, v):=\int_{U} \sum_{i, j=1}^{n} a_y^{i j} u_{x_{i}} v_{x_{j}}+\sum_{i=1}^{n} b_y^{i} u_{x_{i}} v+c_y u v d x
	\end{align}
	for $u, v \in H_{0}^{1}(U)$, which satisfies conditions (i)-(iii) in Assumption \ref{assu}. 

	\subsection{Reduced Basis Solution}\label{RBS}
	In practice, we cannot solve the equation \eqref{PPDE} exactly for every $y\in \mathcal{Y}$. Instead, we construct a high-fidelity solution $u_y^{\mathrm{h}}$ for every fixed $y$, which is achieved using the standard Galerkin method, that is, solving \eqref{PPDE} on a subspace $H_{m} \subset H$ of dimension $m$ such that
	\begin{equation}\label{Galerkin-sol}
		B_{y}\left(u_{y}^{m}, e_{i}\right)=f_{y}(e_{i}) \quad \text { for } i=1,2,\dots,m,
	\end{equation}
	where $\{e_{1}, e_{2}, \ldots, e_{m}\}$ is a basis of $H_{m}$, and $u_{y}^{m}$ can be represented as
	$u_{y}^{m}=\sum_{j=1}^{m} \left(\mathbf{u}^{m}_{y}\right)_{j} e_{j}.$
	We denote $\mathbf{B}^{m}_{y}:=\left(B_{y}\left(e_{j}, e_{i}\right)\right)_{i, j=1}^{m}\in \mathbb{R}^{m\times m}$ and $\mathbf{f}^{m}_{y}:=\left(f_{y}\left(e_{i}\right)\right)_{i=1}^{m} \in \mathbb{R}^{m}$, then equation \eqref{Galerkin-sol} is equivalent to the following system of linear equations:
	\begin{equation*}
		\mathbf{B}^{m}_{y} \mathbf{u}^{m}_{y} = \mathbf{f}^{m}_{y}, \quad \text{i.e.~} \ 
		\mathbf{u}^{m}_{y}=\left(\mathbf{B}^{m}_{y}\right)^{-1} \mathbf{f}^{m}_{y} \in \mathbb{R}^{m}.
	\end{equation*}
	Furthermore, based on C\'{e}a's lemma \cite{quarteroni2015reduced}, the error $u_y-u_{y}^{m}$ between the original and Galerkin solution admits the estimate
	\begin{equation}\label{cea}
		\left\|u_{y}-u_{y}^m\right\|_{H} \leq \frac{\alpha}{\beta} \inf _{w \in H_m}\left\|u_{y}-w\right\|_{H}.
	\end{equation}
	Thus, because of the compactness of the parametric set $\mathcal{Y}$, we assume that, for an arbitrarily small, but fixed $\hat{\epsilon}>0 $, there exists a high-fidelity space $H^h \subset H$ with dimension $D<$ $\infty$ and basis $\left(e_{i}\right)_{i=1}^{D},$ such that the corresponding high-fidelity solutions $u_{y}^{\mathrm{h}}$ satisfy
	$$ \sup _{y \in \mathcal{Y}}\left\|u_{y}-u_{y}^{\mathrm{h}}\right\|_{H} \leq \hat{\epsilon}.$$
	We denote $\mathbf{B}_{y}^{\mathrm{h}}:=$ $\left(B_{y}\left(e_{j}, e_{i}\right)\right)_{i, j=1}^{D} \in \mathbb{R}^{D \times D}$, $\mathbf{f}_{y}^{\mathrm{h}}:=\left(f_{y}\left(e_{i}\right)\right)_{i=1}^{D}$, and $\mathbf{u}_{y}^{\mathrm{h}}:=\left(\mathbf{B}_{y}^{\mathrm{h}}\right)^{-1} \mathbf{f}_{y}^{\mathrm{h}} \in \mathbb{R}^{D}$, then we have $$u_{y}^{\mathrm{h}}:=\sum_{i=1}^{D}\left(\mathbf{u}_{y}^{\mathrm{h}}\right)_{i} e_{i}.$$
	
	However, in practice, solving the high-fidelity solution for various parameters entails severe computational costs. Following \cite{kutyniok2021theoretical}, instead of the high-fidelity solution $u_y^{\mathrm{h}}$, we aim to learn the reduced basis solution $u_y^{\mathrm{rb}}$ of parametric PDE. The reduced basis solution is in a considerably lower dimension space than the high-fidelity solution, while the error between the original solution are comparable. 
	Specifically, we assume that, for every $\tilde{\epsilon} \geq \hat{\epsilon}$, there exists a reduced basis space  $H_{\tilde{\epsilon}}^{\mathrm{rb}}\subset H$, such that $d(\tilde{\epsilon}):=\operatorname{dim}\left(H_{\tilde{\epsilon}}^{\mathrm{rb}}\right) \ll D$ and 
	\begin{equation}\label{fe}
		\sup _{y \in \mathcal{Y}} \inf _{w \in H_{\tilde{\epsilon}}^{\mathrm{rb}}}\left\|u_{y}-w\right\|_{H} \leq \tilde{\epsilon}.
	\end{equation}
	The existence of the reduced basis is based on the compactness of the solution manifold $S(\mathcal{Y})$. A detailed discussion is presented in \cite{quarteroni2015reduced}. 
	
	A basis of $H_{\tilde{\epsilon}}^{\mathrm{rb}}$ is constructed from a set of suitable high-fidelity solutions $\left\{u^{\mathrm{h}}_{y^{1}}, \ldots, u^{\mathrm{h}}_{y^{N}}\right\}$ that we call snapshots, corresponding to a set of $N$ selected parameters
	$
	\mathcal{Y}_{N}=\left\{y^{1}, \ldots, y^{N}\right\} \subset \mathcal{Y},
	$
	such that
	\begin{equation*}
		\sup _{y \in \mathcal{Y}} \inf _{w \in \operatorname{span}\left\{u^{\mathrm{h}}_{y^{1}}, \ldots, u^{\mathrm{h}}_{y^{N}}\right\} }\left\|u_{y}-w\right\|_{H} \leq \tilde{\epsilon}.
	\end{equation*}
	Next, by orthonormalizing the snapshots, we can generate a set of $N$ functions $
	\left\{\psi_{1}, \ldots, \psi_{N}\right\}$ called the reduced basis. Through construction, the reduced basis functions are no longer solutions to the high-fidelity problem, but we have $$ H_{\tilde{\epsilon}}^{\mathrm{rb}} :=\operatorname{span}\left\{\psi_{1}, \ldots, \psi_{N}\right\}=\operatorname{span}\left\{u^{\mathrm{h}}_{y^{1}}, \ldots, u^{\mathrm{h}}_{y^{N}}\right\} $$
	and $d(\tilde{\epsilon}):=\operatorname{dim}\left(H_{\tilde{\epsilon}}^{\mathrm{rb}}\right) =N \ll D.$ The basis vectors $\left(\psi_{i}\right)_{i=1}^{d(\tilde{\epsilon})}$ are linear combinations of the high-fidelity basis vectors $\left(e_{i}\right)_{i=1}^{D}$ with a transformation matrix $\mathbf{V}_{\tilde{\epsilon}} \in \mathbb{R}^{D \times d(\tilde{\epsilon})}$, such that 
	$$
	\left(\psi_{i}\right)_{i=1}^{d(\tilde{\epsilon})}=\left(\sum_{j=1}^{D}\left(\mathbf{V}_{\tilde{\epsilon}}\right)_{j, i} e_{j}\right)_{i=1}^{d(\tilde{\epsilon})}.
	$$
	The reduced basis solution can be represented as follows:
	\begin{equation}\label{represent}
		u_{y, \tilde{\epsilon}}^{\mathrm{rb}}=\sum_{i=1}^{d(\tilde{\epsilon})}\left(\mathbf{u}_{y, \tilde{\epsilon}}^{\mathrm{rb}}\right)_{i} \psi_{i}=\sum_{j=1}^{D}\left(\mathbf{V}_{\tilde{\epsilon}} \mathbf{u}_{y, \tilde{\epsilon}}^{\mathrm{rb}}\right)_{j} e_{j}.
	\end{equation}
	We denote
	$
	\tilde{\mathbf{u}}_{y, \tilde{\epsilon}}^{\mathrm{h}}:=\mathbf{V}_{\tilde{\epsilon}} \mathbf{u}_{y, \tilde{\epsilon}}^{\mathrm{rb}} \in \mathbb{R}^{D}
	$
	the coefficient vector of the reduced basis solution if it is expanded with respect to the high-fidelity basis $\left(e_{i}\right)_{i=1}^{D} .$
	
	Given $y \in \mathcal{Y}$, the reduced basis approximation of equation \eqref{PPDE} is written as follows:
	\begin{equation}\label{rba}
		B_{y}\left(u_{y, \tilde{\epsilon}}^{\mathrm{rb}}, v\right)=f_{\mathrm{y}}(v) \quad \text { for all } v \in H_{\tilde{\epsilon}}^{\mathrm{rb}}.
	\end{equation}
	By inserting \eqref{represent} into \eqref{rba}, we have (see for example \cite{quarteroni2015reduced} Section 3.4.1) 
	\begin{equation}\label{rb}
		\mathbf{u}_{y, \tilde{\epsilon}}^{\mathrm{rb}}:=\left(\mathbf{B}_{y, \tilde{\epsilon}}^{\mathrm{rb}}\right)^{-1} \mathbf{f}_{y, \tilde{\epsilon}}^{\mathrm{rb}},
	\end{equation}
	where
	\begin{equation*}
		\mathbf{B}_{y, \tilde{\epsilon}}^{\mathrm{rb}}:=\left(B_{y}\left(\psi_{j}, \psi_{i}\right)\right)_{i, j=1}^{d(\tilde{\epsilon})}=\mathbf{V}_{\tilde{\epsilon}}^{T} \mathbf{B}_{y}^{\mathrm{h}} \mathbf{V}_{\tilde{\epsilon}} \in \mathbb{R}^{d(\tilde{\epsilon}) \times d(\tilde{\epsilon})}, \quad \text { for all } y \in \mathcal{Y}
	\end{equation*}
	and
	\begin{equation*}
		\mathbf{f}_{y, \tilde{\epsilon}}^{\mathrm{rb}}:=\left(f_{y}\left(\psi_{i}\right)\right)_{i=1}^{d(\tilde{\varepsilon})}=\mathbf{V}_{\tilde{\epsilon}}^{T} \mathbf{f}_{y, \tilde{\epsilon}}^{\mathrm{h}} \in \mathbb{R}^{d(\tilde{\epsilon})}.
	\end{equation*}
	Then, by using \eqref{cea}, we derive the following expression:
	\begin{equation*}
		\sup _{y \in \mathcal{Y}}\left\|u_{y}-u_{y, \tilde{\epsilon}}^{\mathrm{rb}}\right\|_{H} \leq  \frac{\alpha}{\beta}  \sup _{y \in \mathcal{Y}}\inf _{w \in H_{\tilde{\epsilon}}^{\mathrm{rb}}}\left\|u_{y}-w\right\|_{H} \leq \frac{\alpha}{\beta} \tilde{\epsilon}.
	\end{equation*}
	Moreover, because bilinear form $B_y(\cdot,\cdot)$ is symmetric and
	coercive and the reduced basis functions are orthonormal, we have the following expression:
	\begin{equation}\label{B}
		\beta \leq\left\|\mathbf{B}_{y, \tilde{\epsilon}}^{\mathrm{rb}}\right\|_{2} \leq \alpha, \quad \text { as well as } \quad \alpha^{-1} \leq\left\|\left(\mathbf{B}_{y, \tilde{\epsilon}}^{\mathrm{rb}}\right)^{-1}\right\|_{2} \leq \beta^{-1}, \quad \text { for all } y \in \mathcal{Y},
	\end{equation}
	and 
	$$\left|\mathbf{f}_{y, \tilde{\epsilon}}^{\mathrm{rb}}\right| \leq\left\|f_{y}\right\|_{H}  \leq C_{f},$$
	where $|\cdot|$ denotes the Euclidean norm, and $\|\cdot\|_{2}$ denotes the spectral norm.
	
	Finally, let $\mathbf{G}:=\left(\left\langle e_{i},e_{j}\right\rangle_{H}\right)_{i, j=1}^{D} \in \mathbb{R}^{D \times D}$ be the symmetric, positive definite Gram matrix of the high-fidelity basis $\left(e_{i}\right)_{i=1}^{D}$. Next, for any $v \in H_h$ with coefficient vector $\mathbf{v}$ with respect to the basis $\left(e_{i}\right)_{i=1}^{D}$, we have the following expression:
	\begin{equation*}
		|\mathbf{v}|_{\mathbf{G}}:=\left|\mathbf{G}^{1 / 2} \mathbf{v}\right|=\|v\|_{H}.
	\end{equation*}
	Because the reduced basis functions are orthonormal, we have the following expression:
	\begin{equation}\label{G1/2}
		\left\|\mathbf{G}^{1 / 2} \mathbf{V}_{\tilde{\epsilon}}\right\|_{2}=1, \quad \text { for all } \tilde{\epsilon} \geq \hat{\epsilon},  
	\end{equation}
	and 
	$$\left\|\sum_{i=1}^{d(\tilde{\epsilon})} \mathbf{c}_{i} \psi_{i}\right\|_{H}=|\mathbf{c}|, \quad \text { for all } \mathbf{c} \in \mathbb{R}^{d(\tilde{\epsilon})}.$$
	\subsection{Rectified Power Units}
	In this subsection, we briefly introduce rectified power unit (RePU) activation function, which is the power of the ReLU activation functions. The RePU function is defined as follows:
	\begin{equation*}
		\sigma_{s}(x)= \begin{cases}x^{s}, & x \geq 0, \\ 0, & x<0,\end{cases}
	\end{equation*}
	where $s$ is a non-negative integer. The RePU function with $s=1$ is the commonly used ReLU function $\sigma_{1}$. We denote $\sigma_{2}, \sigma_{3}$ ReQU and ReCU for $s=2,3$. 
	
	Deep neural networks with the ReLU activation function are becoming increasingly popular because of their high efficiency and versatility. In this study, we performed numerous matrix multiplications based on scalar multiplication and addition. The work \cite{yarotsky2017error} has revealed that $x^{2}, xy$ can be approximated using a ReLU neural network with depth, the number of weights, and computation units of order $\mathcal{O}(\log \frac{1}{\epsilon})$. However, studies have indicated that deep neural networks using RePUs ($s \ge 2$ ) as the activation functions exhibit superior approximation property for smooth functions than those using ReLUs \cite{li2019better, opschoor2021exponential}. By replacing the ReLU with the RePU($s \ge 2$), the bivariate functions $x^2$ and $xy$ can be represented with no approximation error using networks with a few nodes and nonzero weights. To be more precise, we introduce the following lemma (see \cite{li2019better})
	
	\begin{lem}\label{x}
		For any $x, y \in \mathbb{R}$, the following identities hold
		
		\begin{equation*}
			x^{2} =\mathbf{\beta}_{2}^{T} \sigma_{2}\left(\mathbf{\omega}_{2} x\right),
		\end{equation*}
		\begin{equation*}
			x =\mathbf{\beta}_{1}^{T} \sigma_{2}\left(\mathbf{\omega}_{1} x+\mathbf{\gamma}_{1}\right), 
		\end{equation*}
		\begin{equation*}
			x y =\mathbf{\beta}_{1}^{T} \sigma_{2}\left(\mathbf{\omega}_{1} x+\mathbf{\gamma}_{1} y\right),
		\end{equation*}
		where
		$$
		\mathbf{\beta}_{2}=[1,1]^{T}, \mathbf{\omega}_{2}=[1,-1]^{T}, \mathbf{\beta}_{1}=\frac{1}{4}[1,1,-1,-1]^{T}, \mathbf{\omega}_{1}=[1,-1,1,-1]^{T}, \mathbf{\gamma}_{1}=[1,-1,-1,1]^{T}.
		$$
		and $\sigma_{2}$ acts componentwise.
	\end{lem}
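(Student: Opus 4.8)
The plan is to verify all three identities by direct computation, the only ingredient being the elementary observation that for every real number $t$ one has $\sigma_2(t)+\sigma_2(-t)=t^2$. This follows at once by distinguishing the cases $t\ge 0$ and $t<0$: if $t\ge 0$ then $\sigma_2(t)=t^2$ and $\sigma_2(-t)=0$, while if $t<0$ then $\sigma_2(t)=0$ and $\sigma_2(-t)=(-t)^2=t^2$. Equivalently, the single hidden layer $t\mapsto[\sigma_2(t),\sigma_2(-t)]^T$ followed by summation reproduces the square function exactly, with no constraint on $|t|$. I would record this as the first step since all three identities reduce to it.

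For the first identity, since $\sigma_2$ acts componentwise, $\sigma_2(\omega_2 x)=\sigma_2([x,-x]^T)=[\sigma_2(x),\sigma_2(-x)]^T$, so that $\beta_2^T\sigma_2(\omega_2 x)=\sigma_2(x)+\sigma_2(-x)=x^2$ by the elementary identity. For the third identity, I would first expand the affine argument componentwise: with $\omega_1=[1,-1,1,-1]^T$ and $\gamma_1=[1,-1,-1,1]^T$ one has $\omega_1 x+\gamma_1 y=[\,x+y,\ -(x+y),\ x-y,\ -(x-y)\,]^T$. Applying $\sigma_2$ componentwise and then pairing the first two entries and the last two entries using $\sigma_2(t)+\sigma_2(-t)=t^2$ yields $\beta_1^T\sigma_2(\omega_1 x+\gamma_1 y)=\tfrac14\big[(x+y)^2-(x-y)^2\big]=xy$, which is just the polarization identity. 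The second identity is then the special case $y=1$ of the third; alternatively one checks directly that $\omega_1 x+\gamma_1=[\,x+1,\ -(x+1),\ x-1,\ -(x-1)\,]^T$ gives $\beta_1^T\sigma_2(\omega_1 x+\gamma_1)=\tfrac14\big[(x+1)^2-(x-1)^2\big]=x$.

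There is no genuine obstacle here: the statement is a routine verification, already recorded in \cite{li2019better}, and the only point requiring care is the sign bookkeeping when forming the vectors $\omega_1 x+\gamma_1 y$ and taking the inner product with $\beta_1$. The conceptual remark worth emphasizing — and the reason this lemma is the right starting point for what follows — is that, in contrast to the ReLU constructions of \cite{yarotsky2017error}, these representations are \emph{exact} and valid for all $x,y\in\mathbb{R}$ using a fixed, finite number of nodes and nonzero weights; this exactness is precisely what will let us assemble matrix products and matrix polynomials with ReQU networks without incurring magnitude restrictions or accumulating approximation error.
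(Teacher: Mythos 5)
Your verification is correct and complete: the key identity $\sigma_2(t)+\sigma_2(-t)=t^2$ for all $t\in\mathbb{R}$, combined with the polarization identity $\tfrac14[(x+y)^2-(x-y)^2]=xy$, is exactly what makes all three representations work, and your sign bookkeeping for $\omega_1 x+\gamma_1 y$ checks out. The paper itself states this lemma without proof, deferring to \cite{li2019better}, so there is no in-paper argument to compare against; your direct computation is the standard one and your closing remark about exactness (versus the approximate ReLU constructions of \cite{yarotsky2017error}) correctly identifies why this lemma is the foundation of the subsequent network calculus.
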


	\section{ReQU-Based Neural Network Calculus}
	
	\subsection{Basic Definitions and Operations}
	For convenience, we follow the notations of neural networks presented in \cite{Elbrachter2018DNNER, kutyniok2021theoretical}. Next, we introduce concatenation and parallelization operations for neural networks, which may be used to construct complex neural networks from simple networks. Furthermore, we derive the complexity estimates of operations of ReQU neural networks.
	\begin{defn}
		Let $n, L\in \mathbb{N}$. A neural network $\Phi$ with input dimension $\mathrm{dim}_{\mathrm{in}}(\Phi) = n$ and number of layers $L(\Phi)=L$ is a matrix-vector sequence
		$$
		\Phi=\left(\left(\textbf{A}_{1}, \textbf{b}_{1}\right), \cdots,\left(\textbf{A}_{L}, \textbf{b}_{L}\right)\right),
		$$
		where $\textbf{A}_{k}$ are $N_{k} \times N_{k-1}$ matrices, and $\textbf{b}_{k} \in \mathbb{R}^{N_{k}}, N_{0}=n, N_{1}, \cdots, N_{L} \in \mathbb{N}$. Let $m\in \mathbb{N}$ and $\rho: \mathbb{R} \rightarrow \mathbb{R}$ be an arbitrary activation function. We define
		\begin{equation*}
			\mathrm{R}_{\rho}(\Phi): \mathbb{R}^{n} \rightarrow \mathbb{R}^{N_{L}}, \quad R_{\rho}(\Phi)(\textbf{x})=\textbf{x}_{L},
		\end{equation*}
		where $\textbf{x}_{L}$ is expressed as follows:
		$$
		\left\{\begin{array}{l}
			\textbf{x}_{0}:=\textbf{x}, \\
			\textbf{x}_{k}:=\rho\left(\textbf{A}_{k} \textbf{x}_{k-1}+\textbf{b}_{k}\right), \quad k=1,2, \ldots, L-1, \\
			\textbf{x}_{L}:=\textbf{A}_{L} \textbf{x}_{L-1}+\textbf{b}_{L},
		\end{array}\right.
		$$
		and
		$$
		\rho(\boldsymbol{y}):=\left(\rho\left(y^{1}\right), \cdots, \rho\left(y^{m}\right)\right)^{T}, \quad \forall \boldsymbol{y}=\left(y^{1}, \cdots, y^{m}\right)^{T} \in \mathbb{R}^{m}.
		$$
	\end{defn}
	We denote $N(\Phi):=n+\sum_{k=1}^{L} N_{j}$ number of nodes, $M_{k}(\Phi):=\left\|\mathbf{A}_k\right\|_{0}+\left\|\mathbf{b}_k\right\|_{0}$ the number of nonzero weights in $k$-th layer for $k \leq L$ and $M(\Phi):=\sum_{k=1}^{L} M_{k}(\Phi)$ the total number of nonzero weights of $\Phi$. Here, $\left\|\textbf{A}\right\|_{0}$ denotes the number of nonzero entries of matrix A.
	We use the number of layers, number of nodes, and number of nonzero weights to measure the complexity of the neural networks.
	
	Next, we define the concatenation of two neural networks as follows:
	\begin{defn}\label{neural networkdefinition}
		Let $L_{1}, L_{2} \in \mathbb{N}$ and let 
		$\Phi^{1}=\left(\left(\textbf{A}_{1}^{1}, \textbf{b}_{1}^{1}\right), \ldots,\left(\textbf{A}_{L_{1}}^{1}, \textbf{b}_{L_{1}}^{1}\right)\right)$, $\Phi^{2}=\left(\left(\textbf{A}_{1}^{2}, \textbf{b}_{1}^{2}\right), \ldots\right.$, $\left.\left(\textbf{A}_{L_{2}}^{2}, \textbf{b}_{L_{2}}^{2}\right)\right)$ be two neural networks with activation function $\rho$. The input dimension of $\Phi^{1}$ is the same as the output dimension of $\Phi^{2}$. Next, we denote $\Phi^{1} \bullet \Phi^{2}$ as the concatenation of $\Phi_1$, $\Phi_2$ as follows:
		$$
		\Phi^{1} \bullet \Phi^{2}:=\left(\left(\textbf{A}_{1}^{2}, \textbf{b}_{1}^{2}\right), \ldots,\left(\textbf{A}_{L_{2}-1}^{2}, \textbf{b}_{L_{2}-1}^{2}\right),\left(\textbf{A}_{1}^{1} \textbf{A}_{L_{2}}^{2}, \textbf{A}_{1}^{1} \textbf{b}_{L_{2}}^{2}+\textbf{b}_{1}^{1}\right),\left(\textbf{A}_{2}^{1}, \textbf{b}_{2}^{1}\right), \ldots,\left(\textbf{A}_{L_{1}}^{1}, \textbf{b}_{L_{1}}^{1}\right)\right),
		$$
		where $L(\Phi^{1} \bullet \Phi^{2}) = L_1 + L_2 -1$.
	\end{defn}
	
	Next, a lemma, which shows that $M\left(\Phi^{1} \bullet \Phi^{2}\right)$ can be estimated by $\max \left\{M\left(\Phi^{1}\right), M\left(\Phi^{2}\right)\right\}$ in some special cases, is presented (see also \cite{kutyniok2021theoretical}).
	\begin{lem}\label{1}
		Let $\Phi$ be a neural network with $m$ dimensional output and $d$ dimensional input. If $\mathbf{a} \in \mathbb{R}^{1 \times m}$, then, for all $k=1, \ldots, L(\Phi)$,
		$$
		M_{k}(((\mathbf{a}, 0)) \bullet \Phi) \leq M_{k}(\Phi).
		$$
		In particular, $M((\mathbf{a}, 0) \bullet \Phi) \leq M(\Phi) .$ Moreover, if $\mathbf{D} \in \mathbb{R}^{d \times n}$ such that, for every $k \leq d$ there is at most one $l_{k} \leq n$, such that $\mathbf{D}_{k, l_{k}} \neq 0$, then, for all $k=1, \ldots, L(\Phi)$,
		$$
		M_{k}\left(\Phi \bullet\left(\left(\mathbf{D}, \mathbf{0}_{\mathbb{R}^{d}}\right)\right)\right) \leq M_{k}(\Phi).
		$$
		In particular, it holds that $M\left(\Phi \bullet\left(\left(\mathbf{D}, \mathbf{0}_{\mathbb{R}^{d}}\right)\right)\right) \leq M(\Phi)$.
	\end{lem}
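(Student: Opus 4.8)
The plan is to unfold the concatenation formula of Definition \ref{neural networkdefinition} in the two special cases at hand --- in each of which one factor is a single-layer network --- and then reduce both inequalities to two elementary sparsity facts for matrix products: left-multiplication by a fixed row vector, and right-multiplication by a matrix with at most one nonzero entry per row, can never increase the number of nonzero entries.

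For the first assertion I would take $\Phi^1 = ((\mathbf{a},0))$, so that $L_1 = 1$ and $L((\mathbf{a},0)\bullet\Phi) = L(\Phi) =: L$. Writing $\Phi = ((\mathbf{A}_1,\mathbf{b}_1),\dots,(\mathbf{A}_L,\mathbf{b}_L))$, Definition \ref{neural networkdefinition} gives
\[
(\mathbf{a},0)\bullet\Phi = \bigl((\mathbf{A}_1,\mathbf{b}_1),\dots,(\mathbf{A}_{L-1},\mathbf{b}_{L-1}),(\mathbf{a}\mathbf{A}_L,\,\mathbf{a}\mathbf{b}_L)\bigr),
\]
so for $k \le L-1$ the $k$-th layer is unchanged, whence $M_k((\mathbf{a},0)\bullet\Phi) = M_k(\Phi)$, and only $k = L$ needs attention. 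There I would note that the $j$-th entry of the row vector $\mathbf{a}\mathbf{A}_L$ vanishes whenever the $j$-th column of $\mathbf{A}_L$ is zero, so $\|\mathbf{a}\mathbf{A}_L\|_0$ is bounded by the number of nonzero columns of $\mathbf{A}_L$, hence by $\|\mathbf{A}_L\|_0$; likewise the scalar $\mathbf{a}\mathbf{b}_L$ vanishes when $\mathbf{b}_L = \mathbf{0}$, so $\|\mathbf{a}\mathbf{b}_L\|_0 \le \|\mathbf{b}_L\|_0$. Adding these gives $M_L((\mathbf{a},0)\bullet\Phi) \le M_L(\Phi)$, and then summing over all layers yields the ``in particular'' statement.

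For the second assertion I would take $\Phi^2 = ((\mathbf{D},\mathbf{0}_{\mathbb{R}^d}))$, so that $L_2 = 1$ and $L(\Phi\bullet((\mathbf{D},\mathbf{0}_{\mathbb{R}^d}))) = L(\Phi) = L$. Then Definition \ref{neural networkdefinition} gives
\[
\Phi\bullet((\mathbf{D},\mathbf{0}_{\mathbb{R}^d})) = \bigl((\mathbf{A}_1\mathbf{D},\,\mathbf{b}_1),(\mathbf{A}_2,\mathbf{b}_2),\dots,(\mathbf{A}_L,\mathbf{b}_L)\bigr),
\]
so only the first layer changes and, since $\mathbf{b}_1$ is untouched, it suffices to show $\|\mathbf{A}_1\mathbf{D}\|_0 \le \|\mathbf{A}_1\|_0$. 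I would argue row by row: for a fixed row $i$, the entry $(\mathbf{A}_1\mathbf{D})_{i,j} = \sum_k (\mathbf{A}_1)_{i,k}\,\mathbf{D}_{k,j}$ can be nonzero only if $(\mathbf{A}_1)_{i,k}\ne 0$ and $\mathbf{D}_{k,j}\ne 0$ for some $k$, and by hypothesis such a $k$ forces $j = l_k$; thus each $k$ with $(\mathbf{A}_1)_{i,k}\ne 0$ produces a nonzero entry in at most one column of row $i$ of $\mathbf{A}_1\mathbf{D}$, so that row of $\mathbf{A}_1\mathbf{D}$ has no more nonzero entries than the corresponding row of $\mathbf{A}_1$. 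Summing over $i$ gives $\|\mathbf{A}_1\mathbf{D}\|_0 \le \|\mathbf{A}_1\|_0$, hence $M_1(\Phi\bullet((\mathbf{D},\mathbf{0}_{\mathbb{R}^d}))) \le M_1(\Phi)$, and summing over layers gives the remaining bound.

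I do not expect a genuine obstacle: the argument is a careful transcription of the concatenation bookkeeping together with the two one-line sparsity estimates. The only points requiring care are that cancellations occurring in the products $\mathbf{a}\mathbf{A}_L$ or $\mathbf{A}_1\mathbf{D}$ can only lower $\|\cdot\|_0$ (hence are harmless), and that the ``at most one nonzero entry per row of $\mathbf{D}$'' hypothesis is used precisely to ensure that each surviving weight of $\mathbf{A}_1$ feeds into at most one new weight of $\mathbf{A}_1\mathbf{D}$ --- without it the second inequality can fail.
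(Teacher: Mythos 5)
Your proof is correct and is exactly the intended argument: the paper itself omits the proof of this lemma (deferring to the cited reference of Kutyniok et al.), and the standard proof there is precisely this unfolding of the concatenation formula in the two one-layer cases, followed by the two sparsity observations that $\|\mathbf{a}\mathbf{A}_L\|_0$ is bounded by the number of nonzero columns of $\mathbf{A}_L$ and that the one-nonzero-entry-per-row hypothesis on $\mathbf{D}$ makes each nonzero weight of a row of $\mathbf{A}_1$ contribute to at most one entry of the corresponding row of $\mathbf{A}_1\mathbf{D}$. Nothing is missing.
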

	In general, there is no bound on $M\left(\Phi^{1} \bullet \Phi^{2}\right)$ that is linear in $M\left(\Phi^{1}\right)$ and $M\left(\Phi^{2}\right)$. To overcome this problem, we introduce an alternative concatenation to control the number of nonzero weights. Before proceeding, we detail the following lemma, constructing the ReQU neural network of the identity function based on Definition \ref{neural networkdefinition} and Lemma \ref{x}. Hereafter, we take $\sigma_{2}$ as the activation function.
	\begin{lem}\label{ID}
		For any $n, L \in \mathbb{N}$ and $L\geq 2, $ there exists a ReQU neural network $ \Phi_{n, L}^{\mathbf{Id}}$ with n dimensional input and output, satisfying
		$$
		\mathrm{R}_{\sigma_{2}}\left(\Phi_{n, L}^{\mathbf{Id}}\right)= \mathbf{Id}_{\mathbb{R}^{n}}
		$$
		and
		$$M(\Phi_{n, L}^{\mathbf{Id}})= 20nL - 28n,$$
		where $\mathbf{Id}_{\mathbb{R}^{n}}$ is identity matrix.
		Moreover, let $\mathbf{\omega}_1 =(1,-1,1,-1)^T$, $\mathbf{\gamma}_{1} =(1,-1,1,1)^T$, $\mathbf{\beta}_1 =\frac{1}{4}(1,1,-1,-1)^T,$ and denote $\mathbf{W}\in \mathbb{R}^{4n \times n}, \mathbf{\Gamma}\in\mathbb{R}^{4n \times 1}, \mathbf{B}\in \mathbb{R}^{n \times 4n}$ as follows :
		\begin{equation*}
			\mathbf{W}:=\begin{pmatrix}
				\mathbf{\omega}_1  &0 &\cdots  &0\\ 
				0& \mathbf{\omega}_1 &\cdots &0 \\ 
				\vdots& \vdots & \ddots&\vdots\\
				0&0&\cdots&\mathbf{\omega}_1
			\end{pmatrix}, \  
			\mathbf{\Gamma}:=\begin{pmatrix}
				\mathbf{\gamma}_{1}\\ 
				\mathbf{\gamma}_{1}\\ 
				\vdots\\ 
				\mathbf{\gamma}_{1}
			\end{pmatrix}, \
			\mathbf{B}:=\begin{pmatrix}
				\mathbf{\beta}_1^T  &0 &\cdots  &0\\ 
				0& \mathbf{\beta}_1^T &\cdots &0 \\ 
				\vdots& \vdots & \ddots&\vdots\\
				0&0&\cdots&\mathbf{\beta}_1^T
			\end{pmatrix},
		\end{equation*}
		Then $\Phi_{n, L}^{\mathbf{Id}} $ can be expressed by the following expression:
		\begin{equation}\label{IDL}
			\Phi_{n, L}^{\mathbf{Id}}=(\left(\mathbf{W}, \Gamma\right), \underbrace{\left(\mathbf{WB}, \Gamma\right), \ldots,\left(\mathbf{WB}, \Gamma\right)}_{L-2 \text { times }}, \left(\mathbf{B},0 \right)).
		\end{equation}
		For $L = 1$, we have the following expression:
		\begin{equation}\label{IDD}
			\Phi_{n, 1}^{\mathbf{Id}}=\left(\left(\mathbf{Id}_{\mathbb{R}^{n}}, 0\right)\right)
		\end{equation}
		and $M(\Phi_{n, 1}^{\mathbf{Id}}) = n$.
	\end{lem}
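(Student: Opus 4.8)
The plan is to assemble $\Phi_{n,L}^{\mathbf{Id}}$ by concatenating $L-1$ copies of the depth-two ReQU network that realizes the identity on $\mathbb{R}^n$ coordinate-wise, and then to read off the exact weight count from the resulting normal form \eqref{IDL}. The starting point is the scalar identity $x=\beta_1^T\sigma_2(\omega_1 x+\gamma_1)$ from Lemma \ref{x}: stacking this over the $n$ coordinates shows that the two-layer network $\Phi_{n,2}^{\mathbf{Id}}:=((\mathbf{W},\mathbf{\Gamma}),(\mathbf{B},0))$ satisfies $R_{\sigma_2}(\Phi_{n,2}^{\mathbf{Id}})(\mathbf{x})=\mathbf{B}\,\sigma_2(\mathbf{W}\mathbf{x}+\mathbf{\Gamma})=\mathbf{x}$, because the block-diagonal structure of $\mathbf{W}$ and $\mathbf{B}$ makes the $i$-th output equal to $\beta_1^T\sigma_2(\omega_1 x_i+\gamma_1)=x_i$. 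This settles $L=2$, and $L=1$ is the trivial single affine layer $((\mathbf{Id}_{\mathbb{R}^n},0))$ of \eqref{IDD}, whose realization is $\mathbf{x}\mapsto\mathbf{x}$ and which has $\|\mathbf{Id}_{\mathbb{R}^n}\|_0=n$ nonzero weights.

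For general $L$ I would set $\Phi_{n,L}^{\mathbf{Id}}:=\Phi_{n,2}^{\mathbf{Id}}\bullet\Phi_{n,L-1}^{\mathbf{Id}}$ and induct on $L$. Applying Definition \ref{neural networkdefinition}, the last layer $(\mathbf{B},0)$ of $\Phi_{n,L-1}^{\mathbf{Id}}$ and the first layer $(\mathbf{W},\mathbf{\Gamma})$ of $\Phi_{n,2}^{\mathbf{Id}}$ merge into the single affine layer $(\mathbf{W}\mathbf{B},\,\mathbf{W}\cdot 0+\mathbf{\Gamma})=(\mathbf{W}\mathbf{B},\mathbf{\Gamma})$; this raises the number of internal $(\mathbf{W}\mathbf{B},\mathbf{\Gamma})$-blocks by exactly one, so by the inductive hypothesis the result is precisely the form \eqref{IDL} with $L-2$ middle blocks. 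Since the concatenation composes realizations — the merged layer is exactly the affine map $\mathbf{A}_1^1(\cdot)+\mathbf{b}_1^1$ applied to the un-activated output of the second network, which holds for any activation — and each of the $L-1$ factors realizes $\mathbf{Id}_{\mathbb{R}^n}$, the composition realizes $\mathbf{Id}_{\mathbb{R}^n}$ as well.

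It remains to count nonzero weights in \eqref{IDL} layer by layer. The first layer contributes $\|\mathbf{W}\|_0+\|\mathbf{\Gamma}\|_0=4n+4n=8n$ (each of the $n$ diagonal blocks of $\mathbf{W}$ is the full vector $\omega_1$, and each block of $\mathbf{\Gamma}$ is the full vector $\gamma_1$); the last layer contributes $\|\mathbf{B}\|_0=4n$; and each of the $L-2$ middle layers contributes $\|\mathbf{W}\mathbf{B}\|_0+\|\mathbf{\Gamma}\|_0$. The key point is that $\mathbf{W}\mathbf{B}$ is block-diagonal with $n$ blocks equal to the rank-one matrix $\omega_1\beta_1^T$, and since neither $\omega_1$ nor $\beta_1$ has a zero entry, every one of the $16$ entries of $\omega_1\beta_1^T$ is nonzero; hence $\|\mathbf{W}\mathbf{B}\|_0=16n$ and each middle layer has $20n$ weights. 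Summing, $M(\Phi_{n,L}^{\mathbf{Id}})=8n+(L-2)\cdot 20n+4n=20nL-28n$, which also matches $L=2$ (no middle layers, $12n$ weights).

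There is no deep obstacle here; the two places needing care are (i) checking that the iterated concatenation collapses exactly to the normal form \eqref{IDL} with $L-2$ middle blocks, which is a short induction once the merge rule of Definition \ref{neural networkdefinition} is written out explicitly, and (ii) confirming that no cancellation occurs in the product $\mathbf{W}\mathbf{B}$ so that the per-middle-layer count is genuinely $20n$ and not smaller — both are direct computations with the fixed vectors $\omega_1$, $\gamma_1$, $\beta_1$.
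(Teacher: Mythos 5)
Your proof is correct and takes essentially the same route as the paper's: the paper's own proof consists only of the layer-by-layer weight count from the normal form \eqref{IDL}, and your count $8n+4n+(L-2)\cdot 20n=20nL-28n$ agrees with it, while your verification that the iterated $\bullet$-concatenation collapses to \eqref{IDL} and that the realization is the identity (via Lemma \ref{x}) supplies details the paper leaves implicit. One minor remark: the bias vector you need is $\gamma_1=(1,-1,-1,1)^T$ as in Lemma \ref{x} (the $(1,-1,1,1)^T$ appearing in the statement of Lemma \ref{ID} is a typo, since it does not reproduce the identity), and that is the version your argument in effect uses.
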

	\begin{proof}
		When L = 1, the proof is true. When $L \in \mathbb{N} \cap[2, \infty)$, for every $n \in \mathbb{N}$, based on \eqref{IDL}, we have the following expression:
		\begin{equation*}
			\begin{aligned}
				M(\Phi_{n, L}^{\mathbf{Id}}) &= \|\mathbf{W}\|_0 + \|\mathbf{\Gamma}\|_0 + \|\mathbf{B}\|_0 + (L-2)(\|\mathbf{WB}\|_0+\|\mathbf{\Gamma}\|_0)\\
				&= 12n + (L-2)(16n + 4n)\\
				& = 20nL - 28n.
			\end{aligned} 
		\end{equation*}The proof is completed.
	\end{proof}

	The sparse concatenation of two neural networks is introduced below. 
	
	\begin{defn}
		Let $\Phi^{1} $ and $\Phi^{2}$ be two ReQU neural networks, such that the output dimension of $\Phi^{2}$ and input dimension of $\Phi^{1}$ equal $n$. We define the sparse concatenation of $\Phi^{1}$ and $\Phi^{2}$ as
		$$
		\Phi^{1} \odot \Phi^{2}:=\Phi^{1} \bullet \Phi_{n, 2}^{\mathbf{Id}} \bullet \Phi^{2}.
		$$
	\end{defn}
	
	We now introduce another operation, which we call parallelization.
	
	\begin{defn}
		Let $\Phi^{1}, \ldots, \Phi^{k}$ be neural networks that have equal input dimension, such that $\Phi^{i}=\left(\left(\mathbf{A}_{1}^{i}, \mathbf{b}_{1}^{i}\right), \ldots,\left(\mathbf{A}_{L}^{i}, \mathbf{b}_{L}^{i}\right)\right)$ holds for some $L \in \mathbb{N} .$ Next, we define the parallelization of $\Phi^{1}, \ldots, \Phi^{k}$ with
		\begin{equation}\label{PK}
			\mathrm{P}\left(\Phi^{1}, \ldots, \Phi^{k}\right):=\left(\left(\left(\begin{array}{cccc}
				\mathbf{A}_{1}^{1} &0 &\cdots  &0 \\
				0& \mathbf{A}_{1}^{2} &\cdots &0 \\
				\vdots& \vdots & \ddots&\vdots\\
				0&0&\cdots& \mathbf{A}_{1}^{k}
			\end{array}\right),\left(\begin{array}{c}
				\mathbf{b}_{1}^{1} \\
				\mathbf{b}_{1}^{2} \\
				\vdots \\
				\mathbf{b}_{1}^{k}
			\end{array}\right)\right), \ldots,\left(\left(\begin{array}{cccc}
				\mathbf{A}_{L}^{1} &0 &\cdots  &0  \\
				0& \mathbf{A}_{L}^{2} &\cdots &0  \\
				\vdots& \vdots & \ddots&\vdots\\
				0&0&\cdots& \mathbf{A}_{L}^{k}
			\end{array}\right),\left(\begin{array}{c}
				\mathbf{b}_{L}^{1} \\
				\mathbf{b}_{L}^{2} \\
				\vdots \\
				\mathbf{b}_{L}^{k}
			\end{array}\right)\right)\right).
		\end{equation}
		Now, let $\Phi$ be a neural network and $L \in \mathbb{N}$ such that $L(\Phi) \leq L .$ Next, define the neural network
		\begin{equation}\label{EL}
			E_{L}(\Phi):=\left\{\begin{array}{ll}
				\Phi, & \text { if } L(\Phi)=L, \\
				\Phi_{ \operatorname{dim}_{\text {out }}(\Phi), L-L(\Phi) }^{\mathbf{Id}}\odot \Phi, & \text { if } L(\Phi)<L.
			\end{array}\right.
		\end{equation}
		Finally, let $\tilde{\Phi}^{1}, \ldots, \tilde{\Phi}^{k}$ be neural networks that have the same input dimension, and let
		\begin{equation*} 
			\tilde{L}:=\max \left\{L\left(\tilde{\Phi}^{1}\right), \ldots, L\left(\tilde{\Phi}^{k}\right)\right\}.
		\end{equation*}
		Next, we define
		\begin{equation}\label{P}
			\mathrm{P}\left(\tilde{\Phi}^{1}, \ldots, \tilde{\Phi}^{k}\right):=\mathrm{P}\left(E_{\tilde{L}}\left(\tilde{\Phi}^{1}\right), \ldots, E_{\tilde{L}}\left(\tilde{\Phi}^{k}\right)\right).
		\end{equation}
		We call $\mathrm{P}\left(\tilde{\Phi}^{1}, \ldots, \tilde{\Phi}^{k}\right)$ the parallelization of $\tilde{\Phi}^{1}, \ldots, \tilde{\Phi}^{k}$.
	\end{defn}
	
	The following two lemmas provide the properties of the sparse concatenation and the parallelization of neural networks. 
	
	\begin{lem}\label{11}
		Let $\Phi^{1}, \Phi^{2}$ be  ReQU neural networks. The input dimension of $\Phi^{1}$ equals the output dimension of $\Phi^{2}$, then, for the neural network $\Phi^{1} \odot \Phi^{2}$ it holds
		\begin{enumerate}[(i)]
			\item $[\mathrm{R}_{\sigma_{2}}\left(\Phi^{1}\right) \circ \mathrm{R}_{\sigma_{2}}\left(\Phi^{2}\right)] (\textbf{x})= [\mathrm{R}_{\sigma_{2}}\left(\Phi^{1} \odot \Phi^{2}\right)] (\mathbf{x}),$ for every $\textbf{x} \in \mathbb{R}^{\mathrm{dim}_{\mathrm{in }}\left(\Phi^{2}\right)}$,
			\item $L\left(\Phi^{1} \odot \Phi^{2}\right) = L\left(\Phi^{1}\right)+L\left(\Phi^{2}\right)$,
			\item $M\left(\Phi^{1} \odot \Phi^{2}\right) \leq M\left(\Phi^{1}\right)+M\left(\Phi^{2}\right)+4M_{1}\left(\Phi^{1}\right)+4M_{L\left(\Phi^{2}\right)}\left(\Phi^{2}\right) +4 	\mathrm{dim}_{\mathrm{out }}(\Phi^{2})\leq 5 M\left(\Phi^{1}\right)+5 M\left(\Phi^{2}\right) + 4 	\mathrm{dim}_{\mathrm{out }}(\Phi^{2}) ,$
			\item $M_{1}\left(\Phi^{1} \odot \Phi^{2}\right)=M_{1}\left(\Phi^{2}\right)$, if $L(\Phi_{2})\geq2$,
			\item $M_{L\left(\Phi^{1} \odot \Phi^{2}\right)}\left(\Phi^{1} \odot \Phi^{2}\right)=M_{L\left(\Phi^{1}\right)}\left(\Phi^{1}\right)$, if $L(\Phi_{2})\geq2$. 
		\end{enumerate}
	\end{lem}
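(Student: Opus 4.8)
The plan is to reduce the entire lemma to a single bookkeeping exercise: unwind the definition $\Phi^1\odot\Phi^2=\Phi^1\bullet\Phi^{\mathbf{Id}}_{n,2}\bullet\Phi^2$ into one explicit matrix--vector sequence and then read off (i)--(v). Set $L_i:=L(\Phi^i)$ and write $\Phi^i=((\mathbf{A}^i_1,\mathbf{b}^i_1),\ldots,(\mathbf{A}^i_{L_i},\mathbf{b}^i_{L_i}))$; by Lemma~\ref{ID}, $\Phi^{\mathbf{Id}}_{n,2}=((\mathbf{W},\mathbf{\Gamma}),(\mathbf{B},0))$ with $\mathbf{W}\in\mathbb{R}^{4n\times n}$, $\mathbf{\Gamma}\in\mathbb{R}^{4n}$, $\mathbf{B}\in\mathbb{R}^{n\times 4n}$ and $\mathrm{R}_{\sigma_2}(\Phi^{\mathbf{Id}}_{n,2})=\mathbf{Id}_{\mathbb{R}^n}$. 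Using the associativity of $\bullet$ and applying the concatenation rule of Definition~\ref{neural networkdefinition} twice --- first to $\Phi^{\mathbf{Id}}_{n,2}\bullet\Phi^2$, then to $\Phi^1\bullet(\Phi^{\mathbf{Id}}_{n,2}\bullet\Phi^2)$ --- one obtains
\begin{equation*}
\Phi^1\odot\Phi^2=\left(\left(\mathbf{A}^2_1,\mathbf{b}^2_1\right),\ldots,\left(\mathbf{A}^2_{L_2-1},\mathbf{b}^2_{L_2-1}\right),\left(\mathbf{W}\mathbf{A}^2_{L_2},\mathbf{W}\mathbf{b}^2_{L_2}+\mathbf{\Gamma}\right),\left(\mathbf{A}^1_1\mathbf{B},\mathbf{b}^1_1\right),\left(\mathbf{A}^1_2,\mathbf{b}^1_2\right),\ldots,\left(\mathbf{A}^1_{L_1},\mathbf{b}^1_{L_1}\right)\right),
\end{equation*}
in which exactly the two middle layers --- call them the \emph{seam} layers --- differ from the layers of $\Phi^1$ and $\Phi^2$.

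From this list, (i) and (ii) are essentially free. Since the realization map sends $\bullet$ to ordinary composition, $\mathrm{R}_{\sigma_2}(\Phi^1\odot\Phi^2)=\mathrm{R}_{\sigma_2}(\Phi^1)\circ\mathrm{R}_{\sigma_2}(\Phi^{\mathbf{Id}}_{n,2})\circ\mathrm{R}_{\sigma_2}(\Phi^2)=\mathrm{R}_{\sigma_2}(\Phi^1)\circ\mathbf{Id}_{\mathbb{R}^n}\circ\mathrm{R}_{\sigma_2}(\Phi^2)$, which is (i); and applying $L(\Phi\bullet\Psi)=L(\Phi)+L(\Psi)-1$ twice with $L(\Phi^{\mathbf{Id}}_{n,2})=2$ gives $L(\Phi^1\odot\Phi^2)=L_1+L_2$, which is (ii) and also confirms the length of the displayed list.

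The substance is (iii), a careful count of nonzero weights along this list. The structural facts I would use, all read off from the explicit $\mathbf{W},\mathbf{\Gamma},\mathbf{B}$ of Lemma~\ref{ID} (which come from the $x$, $x^2$, $xy$ identities of Lemma~\ref{x}), are: every row of $\mathbf{W}$ has exactly one nonzero entry, every column of $\mathbf{B}$ has exactly one nonzero entry, and $\mathbf{\Gamma}$ has $4n$ entries. Consequently left-multiplication by $\mathbf{W}$ and right-multiplication by $\mathbf{B}$ each quadruple the number of nonzero entries of a matrix, with no cancellation since each output entry is a single product; hence $\|\mathbf{W}\mathbf{A}^2_{L_2}\|_0=4\|\mathbf{A}^2_{L_2}\|_0$ and $\|\mathbf{A}^1_1\mathbf{B}\|_0=4\|\mathbf{A}^1_1\|_0$, while $\|\mathbf{W}\mathbf{b}^2_{L_2}+\mathbf{\Gamma}\|_0\le 4n=4\,\mathrm{dim}_{\mathrm{out}}(\Phi^2)$ trivially because the vector lives in $\mathbb{R}^{4n}$. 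Summing over the list: the non-seam layers are verbatim copies of layers $1,\ldots,L_2-1$ of $\Phi^2$ and layers $2,\ldots,L_1$ of $\Phi^1$, contributing $[M(\Phi^2)-M_{L_2}(\Phi^2)]+[M(\Phi^1)-M_1(\Phi^1)]$; the first seam layer contributes $4\|\mathbf{A}^2_{L_2}\|_0+\|\mathbf{W}\mathbf{b}^2_{L_2}+\mathbf{\Gamma}\|_0\le 4M_{L_2}(\Phi^2)+4\,\mathrm{dim}_{\mathrm{out}}(\Phi^2)$; and the second seam layer contributes $4\|\mathbf{A}^1_1\|_0+\|\mathbf{b}^1_1\|_0\le 4M_1(\Phi^1)$. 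Adding these yields exactly $M(\Phi^1)+M(\Phi^2)+4M_1(\Phi^1)+4M_{L_2}(\Phi^2)+4\,\mathrm{dim}_{\mathrm{out}}(\Phi^2)$, and the final inequality in (iii) follows from $M_1(\Phi^1)\le M(\Phi^1)$ and $M_{L_2}(\Phi^2)\le M(\Phi^2)$.

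For (iv) and (v) I would just look at the two ends of the list: when $L_2\ge 2$ it begins with the untouched layer $(\mathbf{A}^2_1,\mathbf{b}^2_1)$, so $M_1(\Phi^1\odot\Phi^2)=M_1(\Phi^2)$; and when the last layer of $\Phi^1$ is not absorbed into the seam with $\mathbf{B}$ --- i.e.\ when $\Phi^1$ has at least two layers --- the list ends with $(\mathbf{A}^1_{L_1},\mathbf{b}^1_{L_1})$, so $M_{L(\Phi^1\odot\Phi^2)}(\Phi^1\odot\Phi^2)=M_{L(\Phi^1)}(\Phi^1)$. I expect the only genuine obstacle to be the bookkeeping in (iii): correctly identifying which two layers are the seams and handling the degenerate cases $L_1=1$ or $L_2=1$, where a seam layer slides to the very start or end of the list; in each such case one checks that the same collection of contributions still adds up, so the bound is unaffected.
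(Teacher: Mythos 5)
Your proof is correct and follows essentially the same route as the paper: write out the explicit layer list of $\Phi^1\bullet\Phi^{\mathbf{Id}}_{n,2}\bullet\Phi^2$, note that only the two seam layers differ from the original layers of $\Phi^1$ and $\Phi^2$, and count nonzero weights there using the sparsity of $\mathbf{W}$ and $\mathbf{B}$ (your seam count is in fact more explicit than the paper's, and your itemized contributions actually sum to $M(\Phi^1)+M(\Phi^2)+3M_1(\Phi^1)+3M_{L_2}(\Phi^2)+4\,\mathrm{dim}_{\mathrm{out}}(\Phi^2)$ rather than the ``exactly'' you claim, which only strengthens the stated bound). One remark worth keeping: your hypothesis for (v), namely $L(\Phi^1)\ge 2$, is the mathematically correct one --- if $L(\Phi^1)=1$ the final layer is the seam $(\mathbf{A}^1_1\mathbf{B},\mathbf{b}^1_1)$ and the equality fails --- so the condition $L(\Phi^2)\ge 2$ printed in the lemma appears to be a typo that your argument silently corrects.
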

	\begin{proof}
		For every $i \in\{1,2\}$, let $L_{i} , N_{1}^{i}, N_{2}^{i}, \ldots, N_{L_{i}}^{i}\in \mathbb{N},  \left(\mathbf{A}_{k}^{i}, \mathbf{b}_{k}^{i}\right) \in \mathbb{R}^{N_{k}^{i} \times N_{k-1}^{i}} \times \mathbb{R}^{N_{k}^{i}}, k\in$ $\left\{1,2, \ldots, L_{i}\right\}$, such that 
		$$\Phi^{i}=\left(\left(\mathbf{A}_{1}^{i}, \mathbf{b}_{1}^{i}\right), \ldots,\left(\mathbf{A}_{L_{i}}^{i}, \mathbf{b}_{L_{i}}^{i}\right)\right).$$
		For $k \in \left\{1,2, \ldots, L_{1}+L_{2}\right\}$, let $\left(\mathbf{A}_{k}, \mathbf{b}_{k}\right) \in \mathbb{R}^{N_{k} \times N_{k-1}} \times \mathbb{R}^{N_{k}}$ be the matrix-vector tuples that satisfy $$\Phi^{1} \odot \Phi^{2}=\left(\left(\mathbf{A}_{1}, \mathbf{b}_{1}\right), \ldots,\left(\mathbf{A}_{L_{1}+L_{2}}, \mathbf{b}_{L_{1}+L_{2}}\right)\right).$$ 
		Furthermore, for every $\mathbf{x} \in \mathbb{R}^{N_{0}}$, let $\mathbf{x}_k$ be expressed as follows:
		\begin{equation}\label{3.8}
			\mathbf{x}_{k}(\mathbf{x})= \begin{cases}\sigma_2 \left(\mathbf{A}_{1} \mathbf{x}+\mathbf{b}_{1}\right), & \text { if } k=1,\\ 
				\sigma_2 \left(\mathbf{A}_{k} \mathbf{x}_{k-1}(\mathbf{x})+\mathbf{b}_{k}\right), & \text { if }1<k<L_{1}+L_{2}, \\ 
				\mathbf{A}_{L_1 + L_2} \mathbf{x}_{L_1 + L_2-1}(\mathbf{x})+\mathbf{b}_{L_1 + L_2}, & \text { if } k=L_{1}+L_{2}.\end{cases}
		\end{equation}
		By definition, we have the following expression:
		\begin{equation}\label{3.9}
			\Phi^{1} \odot    \Phi^{2}:=\left(\left(\mathbf{A}_{1}^{2}, \mathbf{b}_{1}^{2}\right), \ldots,\left(\mathbf{A}_{L_{2}-1}^{2}, \mathbf{b}_{L_{2}-1}^{2}\right),\left(\mathbf{W} \mathbf{A}_{L_{2}}^{2}, \mathbf{W} \mathbf{b}_{L_{2}}^{2}+\mathbf{\Gamma}\right),\left(\mathbf{A}_{1}^{1}\mathbf{B}, \mathbf{b}_{1}^{1}\right), \left(\mathbf{A}_{2}^{1}, \mathbf{b}_{2}^{1}\right), \ldots,\left(\mathbf{A}_{L_{1}}^{1}, \mathbf{b}_{L_{1}}^{1}\right)\right).
		\end{equation}
		For every $k \in\left\{1,2, \ldots, L_{2}-1\right\}$, $\left(A_{k}, b_{k}\right)=\left(A_{k}^{2}, b_{k}^{2}\right) .$ This implies that 
		\begin{equation}\label{3.10}
			\mathbf{A}_{L_{2}}^{2} \mathbf{x}_{L_{2}-1}(\mathbf{x})+\mathbf{b}_{L_{2}}^{2}=\left[R_{\sigma_{2}}\left(\Phi^{2}\right)\right](\mathbf{x}).
		\end{equation}
		Thus, we have
		\begin{equation}\label{L2}
			\mathbf{x}_{L_{2}}(\mathbf{x}) = \sigma_{2}\left(\textbf{A}_{L_{2}} \mathbf{x}_{L_{2}-1}(\mathbf{x})+\textbf{b}_{L_{2}}\right)=\sigma_2 \left(\mathbf{W} \mathbf{A}_{L_{2}}^{2} \mathbf{x}_{L_{2}-1}(\mathbf{x})+(\mathbf{W} \mathbf{b}_{L_{2}}^{2}+\mathbf{\Gamma})\right).
		\end{equation}
		Next \eqref{3.9},\eqref{3.10},  \eqref{L2}, and Lemma \ref{ID} ensure that, for every $\mathbf{x} \in \mathbb{R}^{N_{0}}$
		\begin{equation*}
			\begin{aligned}
				\mathbf{x}_{L_{2}+1}(\mathbf{x}) &=\sigma_{2}(\textbf{A}_{L_{2}+1}\left(\sigma_{2}\left(\textbf{A}_{L_{2}} \mathbf{x}_{L_{2}-1}(\mathbf{x})+\textbf{b}_{L_{2}}\right)\right)+\textbf{b}_{L_2 + 1})\\
				&=\sigma_2(\mathbf{A}_{1}^{1}\mathbf{B}\sigma_2 \left(\mathbf{W} \mathbf{A}_{L_{2}}^{2} \mathbf{x}_{L_{2}-1}(\mathbf{x})+(\mathbf{W} \mathbf{b}_{L_{2}}^{2}+\mathbf{\Gamma})\right)
				+\mathbf{b}_{1}^{1}) \\
				&=\sigma_2(\mathbf{A}_{1}^{1}\mathbf{B}\sigma_{2}(\mathbf{W}\left[R_{\sigma_{2}}\left(\Phi^{2}\right)\right](\mathbf{x})+\mathbf{\Gamma})+\mathbf{b}_{1}^{1})\\ 
				&=\sigma_2(\mathbf{A}_{1}^{1}\left[R_{\sigma_{2}}\left(\Phi^{2}\right)\right](\mathbf{x})+\mathbf{b}_{1}^{1}) \\
			\end{aligned}
		\end{equation*} 
		holds. Combining this with \eqref{3.8}, we obtain (i). Moreover, (ii), (iv), and (v) follow directly from \eqref{3.9}.  
		
		To prove (iii), note that
		$$M_{L_2}(\Phi^{1} \odot    \Phi^{2})\leq 4 M_{L_2}(\Phi^2) + 4 \mathrm{dim}_{\mathrm{out}}(\Phi^2)$$
		$$M_{L_2 + 1}(\Phi^1 \odot \Phi^2) \leq 4M_1(\Phi^1),$$
		then, by \eqref{3.9} we have
		\begin{align*}
			M(\Phi^1 \odot \Phi^2)&\leq M(\Phi^1) + M(\Phi^2) + M_{L_2}(\Phi^{1} \odot    \Phi^{2}) + M_{L_2 + 1}(\Phi^1 \odot \Phi^2)\\
			&\leq M(\Phi^1) + M(\Phi^2) + 4M_1(\Phi^1) +4 M_{L_2}(\Phi^2) + 4 \mathrm{dim}_{\mathrm{out}}(\Phi^2) .
		\end{align*}
		This completes the proof of Lemma \ref{11}.
	\end{proof}
	\begin{lem}\label{11b} Let $\Phi^{1}, \ldots, \Phi^{k}$ be ReQU neural networks. If the input dimension of $\Phi^{i}$ equals the input dimension of $\Phi^{j}$ for all $i, j$, then, for the neural network $ \mathrm{P}\left(\Phi^{1}, \Phi^{2}, \ldots, \Phi^{k}\right)$, we have the following expression:
		\begin{enumerate}[(i)]
			\item$\mathrm{R}_{\sigma_{2}}\left(\mathrm{P}\left(\Phi^{1}, \Phi^{2}, \ldots, \Phi^{k}\right)\right)\left(\mathbf{x}_{1}, \ldots, \mathbf{x}_{k}\right) = \left(\mathrm{R}_{\sigma_{2}}\left(\Phi^{1}\right)\left(\mathbf{x}_{1}\right), \mathrm{R}_{\sigma_{2}}\left(\Phi^{2}\right)\left(\mathbf{x}_{2}\right), \ldots, \mathrm{R}_{\sigma_{2}}\left(\Phi^{k}\right)\left(\mathbf{x}_{k}\right)\right),$ for all $\mathbf{x}_{1}, \ldots \mathbf{x}_{k} \in R^{n},$ 
			\item$L\left(\mathrm{P}\left(\Phi^{1}, \Phi^{2}, \ldots, \Phi^{k}\right)\right) = \max _{i=1, \ldots, k} L\left(\Phi^{i}\right)$,
			\item $M\left(\mathrm{P}\left(\Phi^{1}, \Phi^{2}, \ldots, \Phi^{k}\right)\right) \leq \sum_{i=1}^{k}\left( M\left(\Phi^{i}\right) + 4 M_{L\left(\Phi^{i}\right)}\left(\Phi^{i}\right)
			+\mathrm{dim}_{\mathrm{out}}(\Phi^{i})\big(20L(\mathrm{P}\left(\Phi^{1}, \Phi^{2}, \ldots, \Phi^{k}\right))+8 \big) \right),$
			\item $M\left(\mathrm{P}\left(\Phi^{1}, \Phi^{2}, \ldots, \Phi^{k}\right)\right)=\sum_{i=1}^{k} M\left(\Phi^{i}\right)$, if $L\left(\Phi^{1}\right) = L\left(\Phi^{2}\right) = \ldots = L\left(\Phi^{k}\right)$,
			\item $M_{1}\left(\mathrm{P}\left(\Phi^{1}, \Phi^{2}, \ldots, \Phi^{k}\right)\right)=\sum_{i=1}^{k} M_{1}\left(\Phi^{i}\right),$
			\item $M_{L\left(\mathrm{P}\left(\Phi^{1}, \Phi^{2}, \ldots, \Phi^{k}\right)\right)}\left(\mathrm{P}\left(\Phi^{1}, \Phi^{2}, \ldots, \Phi^{k}\right)\right) \leq \sum_{i=1}^{k} \max \left\{4 \mathrm{dim}_{\mathrm{out }}\left(\Phi^{i}\right), M_{L\left(\Phi^{i}\right)}\left(\Phi^{i}\right)\right\},$
			\item $M_{L\left(\mathrm{P}\left(\Phi^{1}, \Phi^{2}, \ldots, \Phi^{k}\right)\right)}\left(\mathrm{P}\left(\Phi^{1}, \Phi^{2}, \ldots, \Phi^{k}\right)\right)=\sum_{i=1}^{k} M_{L\left(\Phi^{i}\right)}\left(\Phi^{i}\right)$, if $L\left(\Phi^{1}\right)=L\left(\Phi^{2}\right)=\ldots=L\left(\Phi^{k}\right).$
		\end{enumerate}
		
	\end{lem}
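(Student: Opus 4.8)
The plan is to reduce everything to the case of subnetworks of equal depth and then to read the conclusions off the explicit block-diagonal form \eqref{PK}. So I would first establish all seven assertions under the extra hypothesis $L(\Phi^{1})=\cdots=L(\Phi^{k})=:L$. In that situation $\mathrm{P}(\Phi^{1},\ldots,\Phi^{k})$ is literally the network written out in \eqref{PK}: its $j$-th weight matrix is the block-diagonal matrix with diagonal blocks $\mathbf{A}_{j}^{1},\ldots,\mathbf{A}_{j}^{k}$ and its $j$-th bias is the stack of $\mathbf{b}_{j}^{1},\ldots,\mathbf{b}_{j}^{k}$. For (i) I would prove, by induction on $j$, that on the stacked input $(\mathbf{x}_{1},\ldots,\mathbf{x}_{k})$ the vector produced after $j$ layers of $\mathrm{P}(\Phi^{1},\ldots,\Phi^{k})$ is the stack of the vectors produced after $j$ layers of the individual $\Phi^{i}$ applied to $\mathbf{x}_{i}$; the induction step only uses that a block-diagonal matrix times a stacked vector acts blockwise, that $\sigma_{2}$ is applied componentwise, and that the last layer is affine. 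Assertions (ii), (iv), (v), (vii) then follow by inspection: the depth equals the common value $L$, and a block-diagonal matrix (resp.\ a stacked bias vector) has exactly as many nonzero entries as the sum of those of its blocks (resp.\ components), so $M_{j}(\mathrm{P}(\Phi^{1},\ldots,\Phi^{k}))=\sum_{i=1}^{k}M_{j}(\Phi^{i})$ for every $j$; summing over $j$ gives (iv), while (v) and (vii) are the cases $j=1$ and $j=L$. In this equal-depth case (iii) and (vi) also reduce to these equalities.

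Next I would pass to the general case via the extension operator. By the definition of parallelization \eqref{P}, $\mathrm{P}(\Phi^{1},\ldots,\Phi^{k})=\mathrm{P}\bigl(E_{\tilde L}(\Phi^{1}),\ldots,E_{\tilde L}(\Phi^{k})\bigr)$ with $\tilde L=\max_{i}L(\Phi^{i})$. Writing $n_{i}:=\mathrm{dim}_{\mathrm{out}}(\Phi^{i})$, from \eqref{EL} we have $E_{\tilde L}(\Phi^{i})=\Phi^{i}$ when $L(\Phi^{i})=\tilde L$ and $E_{\tilde L}(\Phi^{i})=\Phi_{n_{i},\,\tilde L-L(\Phi^{i})}^{\mathbf{Id}}\odot\Phi^{i}$ otherwise. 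Since Lemma~\ref{ID} gives $\mathrm{R}_{\sigma_{2}}(\Phi_{n,m}^{\mathbf{Id}})=\mathbf{Id}_{\mathbb{R}^{n}}$ and, via \eqref{IDL}--\eqref{IDD}, $L(\Phi_{n,m}^{\mathbf{Id}})=m$, Lemma~\ref{11}(i),(ii) imply $\mathrm{R}_{\sigma_{2}}(E_{\tilde L}(\Phi^{i}))=\mathrm{R}_{\sigma_{2}}(\Phi^{i})$ and $L(E_{\tilde L}(\Phi^{i}))=\tilde L$. Hence all the $E_{\tilde L}(\Phi^{i})$ have the common depth $\tilde L$, and applying the equal-depth step to this family produces (i) and (ii) for $\Phi^{1},\ldots,\Phi^{k}$, together with $M_{j}(\mathrm{P}(\Phi^{1},\ldots,\Phi^{k}))=\sum_{i=1}^{k}M_{j}(E_{\tilde L}(\Phi^{i}))$ for every layer $j$, in particular for $j=1$ and $j=\tilde L$.

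It then remains to compare $M(E_{\tilde L}(\Phi^{i}))$, $M_{1}(E_{\tilde L}(\Phi^{i}))$ and $M_{\tilde L}(E_{\tilde L}(\Phi^{i}))$ with the corresponding quantities for $\Phi^{i}$. When $L(\Phi^{i})=\tilde L$ these coincide and the asserted inequalities hold trivially. When $L(\Phi^{i})<\tilde L$, set $m:=\tilde L-L(\Phi^{i})\ge 1$ and feed $E_{\tilde L}(\Phi^{i})=\Phi_{n_{i},m}^{\mathbf{Id}}\odot\Phi^{i}$ into Lemma~\ref{11}(iii),(iv),(v); inserting the explicit counts of Lemma~\ref{ID} — namely $M(\Phi_{n_{i},m}^{\mathbf{Id}})=20n_{i}m-28n_{i}$ (and $=n_{i}$ if $m=1$), $M_{1}(\Phi_{n_{i},m}^{\mathbf{Id}})\le 8n_{i}$, and the fact that the last layer $(\mathbf{B},0)$ of $\Phi_{n_{i},m}^{\mathbf{Id}}$ carries $4n_{i}$ nonzero weights — and using $m\le\tilde L$, one collects terms to obtain
\begin{equation*}
M(E_{\tilde L}(\Phi^{i}))\le M(\Phi^{i})+4M_{L(\Phi^{i})}(\Phi^{i})+n_{i}(20\tilde L+8),\qquad M_{1}(E_{\tilde L}(\Phi^{i}))=M_{1}(\Phi^{i}),
\end{equation*}
and $M_{\tilde L}(E_{\tilde L}(\Phi^{i}))\le\max\{4n_{i},\,M_{L(\Phi^{i})}(\Phi^{i})\}$, all with equality when the depths coincide. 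Summing over $i$ and recalling $\tilde L=L(\mathrm{P}(\Phi^{1},\ldots,\Phi^{k}))$ from (ii) yields (iii), (v), (vi) and (vii).

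The only genuinely delicate part is the weight bookkeeping in this last step, because $E_{\tilde L}$ conceals a sparse concatenation that splices the two-layer identity block $\Phi_{n_{i},2}^{\mathbf{Id}}$ between $\Phi_{n_{i},m}^{\mathbf{Id}}$ and $\Phi^{i}$ and composes weight matrices at the two junctions, so the per-layer counts must be tracked carefully; in particular the subcase $m=1$ (and, for the first-layer identity in (v), the subcase $L(\Phi^{i})=1$) falls outside the hypotheses of Lemma~\ref{11}(iv),(v), and there I would instead read the first and last layers of $\Phi_{n_{i},1}^{\mathbf{Id}}\bullet\Phi_{n_{i},2}^{\mathbf{Id}}\bullet\Phi^{i}$ directly from Definition~\ref{neural networkdefinition}, using that $\Phi_{n_{i},1}^{\mathbf{Id}}$ merely left-multiplies the final weight matrix by the identity (so the last-layer count stays $4n_{i}$), and dispose of the remaining depth-one corner cases of Lemma~\ref{11} by the same direct inspection.
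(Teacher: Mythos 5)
Your proposal is correct and follows essentially the same route as the paper: establish the claims for equal depths from the block-diagonal form \eqref{PK}, then reduce the general case via $E_{\tilde L}$ using Lemma \ref{ID} and Lemma \ref{11} to bound $M(E_{\tilde L}(\Phi^{i}))$, $M_{1}$, and $M_{\tilde L}$ in terms of the corresponding quantities for $\Phi^{i}$, arriving at the same estimate $M(E_{\tilde L}(\Phi^{i}))\le M(\Phi^{i})+4M_{L(\Phi^{i})}(\Phi^{i})+(20\tilde L+8)\,\mathrm{dim}_{\mathrm{out}}(\Phi^{i})$ as the paper's \eqref{weight-Phi-j}. You are in fact more explicit than the paper about the depth-one corner cases that fall outside Lemma \ref{11}(iv),(v), which the paper disposes of with "a simple calculation."
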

	
	\begin{proof}
		By Lemma \ref{11}, for every $i \in\{1,2, \ldots, n\}$ we have the following expression:
		$$
		R_{\sigma_{2}}\left(E_{\tilde{L}}\left(\Phi^{i}\right)\right)=R_{\sigma_{2}}\left(\Phi^{i}\right).
		$$
		Combining this with \eqref{EL} and \eqref{P} establishes (i). Furthermore, (ii), (iv), (v), and (vii) follow directly from \eqref{PK}, \eqref{EL}, and \eqref{P}. In addition, note that \eqref{IDL}, \eqref{ID}, and \eqref{EL} ensure for every $i \in\{1,2, \ldots, k\}$ that
		$$
		M_{L(\Phi)}\left(E_{L(\Phi)}\left(\Phi^{i}\right)\right) \leq \max \left\{4 \operatorname{dim}_{\text {out }}\left(\Phi^{i}\right), M_{L\left(\Phi^{i}\right)}\left(\Phi^{i}\right)\right\},
		$$
		which implies $(vi).$ 
		
		Now, we need to show $(iii).$ Let $\Phi = \mathrm{P}\left(\Phi^{1}, \Phi^{2}, \ldots, \Phi^{k}\right)$. By Lemma \ref{ID}, Lemma \ref{11}, and \eqref{EL}, for every $i \in\{1,2, \ldots, k\}$, if $L(\Phi)-L(\Phi^i) \ge 2$, we have the following expression:
		\begin{align}\label{weight-Phi-j}
			M\left(E_{L(\Phi)}\left(\Phi^{i}\right)\right) 
			& \leq  M\left(\Phi_{\mathrm{dim}_{\text {out }}(\Phi^i), L(\Phi)-L(\Phi^i)}^{\mathrm{Id}}\right)+ M\left(\Phi^{i}\right)+4M_1 \left(\Phi_{\mathrm{dim}_{\text {out }}(\Phi^i), L(\Phi)-L(\Phi^i)}^{\mathrm{Id}}\right) \nonumber\\
			&\quad +4M_{L\left(\Phi^{i}\right)}\left(\Phi^{i}\right) + 4\mathrm{dim}_{\text {out }}(\Phi^i) \nonumber\\
			&\leq 20 \mathrm{dim}_{\text {out }}(\Phi^i) L(\Phi) - 28\mathrm{dim}_{\text {out }}(\Phi^i) + M(\Phi^i)+32 \mathrm{dim}_{\text {out }}(\Phi^i) \nonumber\\
			&\quad +4M_{L\left(\Phi^{i}\right)}\left(\Phi^{i}\right) + 4\mathrm{dim}_{\text {out }}(\Phi^i) \nonumber\\
			&= M(\Phi^i) + 4M_{L\left(\Phi^{i}\right)}\left(\Phi^{i}\right) + (20 L(\Phi)+ 8)\mathrm{dim}_{\text {out }}(\Phi^i).
		\end{align}
		For  $L(\Phi)-L(\Phi^i) = 1$, by Lemma \ref{ID} and a simple calculation, \eqref{weight-Phi-j} still holds. Combining this with (iv) implies (iii). 
		The proof of Lemma \ref{11b} is thus completed.
	\end{proof}

	\section{Complexity Bound of ReQU Neural Network to Approximate Matrix Inversion}
	To proceed, we first introduce vectorized matrix hereinafter to stay in the classical neural network setup.
	
	\begin{defn}
		Let $\mathbf{A} \in \mathbb{R}^{d \times l}$. We denote
		$$
		\operatorname{vec}(\mathbf{A}):=\left(\mathbf{A}_{1,1}, \ldots, \mathbf{A}_{d, 1}, \ldots, \mathbf{A}_{1, l}, \ldots, \mathbf{A}_{d, l}\right)^{T} \in \mathbb{R}^{d l}
		$$
		Moreover, for a vector $\mathbf{v}=\left(\mathbf{v}_{1,1}, \ldots, \mathbf{v}_{d, 1}, \ldots, \mathbf{v}_{1, d}, \ldots, \mathbf{v}_{d, l}\right)^{T} \in \mathbb{R}^{d l}$, we set
		$$
		\operatorname{matr}(\mathbf{v}):=\left(\mathbf{v}_{i, j}\right)_{i=1, \ldots, d, j=1, \ldots, l} \in \mathbb{R}^{d \times l}
		$$
	\end{defn}
	In this section, we prove the following theorem.
	\begin{thm}\label{3.4}
		For $\epsilon, \delta \in(0,1)$, we define
		$$
		l = l(\epsilon, \delta):=\left\lceil\log_{2}(\log_{1-\delta}(\delta\epsilon)+1)\right\rceil,
		$$
		where $\lceil a\rceil:=\min \{b \in \mathbb{Z}: b \geq a\} $ for $a \in \mathbb{R}$. Let $d\in \mathbb{N}$,  there exists a ReQU neural network $\Phi_{\mathrm{inv} ; \epsilon}^{d}$ with $d^{2}$ dimensional input and $d^{2}$ dimensional output satisfying the following properties:
		\begin{enumerate}[(i)]
			\item$\sup_{\mathbf{A}\in\mathbb{R}^{d\times d},\|\mathbf{A}\|_2\leq 1-\delta}\left\|\left(\mathbf{I} \mathbf{d}_{\mathbb{R}^{d}}-\mathbf{A}\right)^{-1}-\operatorname{matr}\left(\mathrm{R}_{\sigma_{2}}\left(\Phi_{\mathrm{inv}; \epsilon}^{d}\right)(\operatorname{vec}(\mathbf{A}))\right)\right\|_{2} \leq \epsilon,$
			\item $L\left(\Phi_{\mathrm {inv};\epsilon}^{d}\right) = 2l+1, $
			\item there exists a universal constant $C_{\mathrm{inv}}>0$ such that
			$$M\left(\Phi_{\mathrm{inv} ; \epsilon}^{d}\right) \leq C_{\mathrm{inv}}d^3l^2.$$ 
		\end{enumerate}
		
	\end{thm}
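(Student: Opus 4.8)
The plan is to exploit the Neumann series identity $(\mathbf{Id}-\mathbf{A})^{-1}=\sum_{k\geq 0}\mathbf{A}^k$, together with the classical doubling trick that writes a truncated Neumann sum as an iterated product. Specifically, for $\|\mathbf{A}\|_2\leq 1-\delta$ one has
\begin{equation*}
\Big\|(\mathbf{Id}-\mathbf{A})^{-1}-\sum_{k=0}^{2^l-1}\mathbf{A}^k\Big\|_2\leq\sum_{k\geq 2^l}(1-\delta)^k=\frac{(1-\delta)^{2^l}}{\delta},
\end{equation*}
and the choice of $l=l(\epsilon,\delta)$ is precisely arranged so this tail is $\leq\epsilon$. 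The key algebraic observation is the factorization
\begin{equation*}
\sum_{k=0}^{2^l-1}\mathbf{A}^k=\prod_{j=0}^{l-1}\big(\mathbf{Id}+\mathbf{A}^{2^j}\big),
\end{equation*}
so that the truncated inverse can be computed by $l$ rounds of "square the current matrix power, and multiply the running partial product by $(\mathbf{Id}+\text{that power})$." First I would realize one such round as a small ReQU sub-network: matrix multiplication $\mathbf{X}\mapsto\mathbf{X}\mathbf{Y}$ on $d\times d$ matrices is a collection of $d^2$ inner products, each a sum of $d$ scalar products $x_{ik}y_{kj}$, and by Lemma~\ref{x} each scalar product $xy$ is represented \emph{exactly} by a fixed-size ReQU network with no norm restriction; summing and adding the identity is an affine map. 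Thus one doubling round is a fixed-depth (depth $2$) ReQU network with $\mathcal{O}(d^3)$ nonzero weights, taking the pair $(\mathbf{P},\mathbf{Q})$ (running product, running power) to the next pair.

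Next I would assemble $\Phi^d_{\mathrm{inv};\epsilon}$ by composing these $l$ rounds using the sparse concatenation $\odot$ and parallelization $\mathrm{P}$ from Section~3, carrying the pair $(\mathbf{P},\mathbf{Q})\in\mathbb{R}^{d^2}\times\mathbb{R}^{d^2}$ forward and, at each stage, feeding $\mathbf{Q}$ into a squaring block (using the $xy$ gadget) while feeding $(\mathbf{P},\mathbf{Q})$ into a block that outputs $\mathbf{P}(\mathbf{Id}+\mathbf{Q})$ — both of which are handled by $\mathrm{P}$ of scalar-multiplication gadgets followed by affine summation. Initializing with $\mathbf{P}_0=\mathbf{Id}$, $\mathbf{Q}_0=\mathbf{A}$ and reading off $\mathbf{P}_l$ at the end gives exactly $\prod_{j=0}^{l-1}(\mathbf{Id}+\mathbf{A}^{2^j})$, establishing property (i) via the tail bound above. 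The depth count: each round contributes a constant number of layers, and one checks the bookkeeping gives $L(\Phi^d_{\mathrm{inv};\epsilon})=2l+1$, which is property (ii); this should fall out directly from Lemma~\ref{11}(ii) and Lemma~\ref{11b}(ii) once the per-round depth is pinned to the right constant (depth $2$ per round, with the $\odot$ operation adding the layers of the interposed identity network).

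For the complexity bound (iii), I would track $M$ through the composition. A single round has $\mathcal{O}(d^3)$ weights since it is a parallelization of $\mathcal{O}(d^3)$ constant-size scalar gadgets plus $\mathcal{O}(d^2)$-sized affine maps; Lemma~\ref{11b}(iii) inflates this by the additive term $\mathrm{dim}_{\mathrm{out}}\cdot(20L(\Phi)+8)$, and since the output dimension of each intermediate block is $\mathcal{O}(d^2)$ and the global depth is $L(\Phi)=2l+1=\mathcal{O}(l)$, this extra term is $\mathcal{O}(d^2 l)$ per round. Composing $l$ rounds via $\odot$, Lemma~\ref{11}(iii) only multiplies constants by $5$ and adds $\mathcal{O}(d^2)$ per concatenation, so summing over the $l$ rounds yields $M(\Phi^d_{\mathrm{inv};\epsilon})\leq l\cdot\mathcal{O}(d^3)+l\cdot\mathcal{O}(d^2 l)=\mathcal{O}(d^3 l)+\mathcal{O}(d^2 l^2)\leq C_{\mathrm{inv}}d^3 l^2$, with a universal constant $C_{\mathrm{inv}}$. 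The main obstacle I anticipate is the bookkeeping in this last step: because sparse concatenation and parallelization interact with the first- and last-layer weight counts $M_1$, $M_{L(\cdot)}$ and with the padding identity networks $\Phi^{\mathbf{Id}}_{n,L}$ (whose size is linear in both $n$ and the padding depth), one has to be careful that the $\mathcal{O}(l)$ padding layers inserted to synchronize depths across parallel branches do not blow up the $d^3$ coefficient — keeping the interface dimensions at $\mathcal{O}(d^2)$ rather than $\mathcal{O}(d^3)$ throughout is what keeps the identity-padding cost at $\mathcal{O}(d^2 l^2)$ and hence subsumed in $C_{\mathrm{inv}}d^3l^2$. A secondary subtlety is verifying that the exact representability from Lemma~\ref{x} really does hold with \emph{no} restriction on the magnitude of the matrix entries (so that intermediate powers $\mathbf{A}^{2^j}$, whose entries may be larger than those of $\mathbf{A}$, cause no trouble), but this is exactly the feature of ReQU emphasized in Section~2.
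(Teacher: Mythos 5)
Your proposal is correct and rests on exactly the same mathematical skeleton as the paper's proof: the Neumann tail bound $\|(\mathbf{Id}-\mathbf{A})^{-1}-\sum_{k=0}^{2^l-1}\mathbf{A}^k\|_2\le (1-\delta)^{2^l}/\delta\le\epsilon$, the factorization $\sum_{k=0}^{2^l-1}\mathbf{A}^k=\prod_{j=0}^{l-1}(\mathbf{Id}+\mathbf{A}^{2^j})$, the exact depth-$2$ ReQU multiplication gadgets of Proposition \ref{p}, and the calculus of Lemmas \ref{11}--\ref{11b}. Where you genuinely diverge is in how the product is wired up. The paper builds $\pi_l=\Phi_{\mathrm{mult}}^{d,d,d}\odot \mathrm{P}(\pi_{l-1},\Phi_1\odot\Phi_{2^{l-1}}^d)$, i.e.\ at each level it spawns a fresh branch that recomputes $\mathbf{A}^{2^{l-1}}$ from the raw input via $\Phi_{2^{l-1}}^d$ (costing $\mathcal{O}(ld^3)$ weights per level), and summing over levels is what produces the $d^3l^2$ bound. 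You instead pipeline a state $(\mathbf{P},\mathbf{Q})\mapsto(\mathbf{P}(\mathbf{Id}+\mathbf{Q}),\mathbf{Q}^2)$ through $l$ identical depth-$2$ rounds, reusing the running power rather than recomputing it; since both parallel branches of a round have equal depth, Lemma \ref{11b}(iv) applies with no identity padding, and your construction actually yields the sharper count $\mathcal{O}(d^3l+d^2l^2)$, which is of course subsumed by $C_{\mathrm{inv}}d^3l^2$. Two small bookkeeping points to tighten: (a) the map $\mathbf{P}(\mathbf{Id}+\mathbf{Q})$ should be realized by \emph{pre}composing $\Phi_{\mathrm{mult}}^{d,d,d}$ with the affine shift $\mathbf{Q}\mapsto\mathbf{Q}+\mathrm{vec}(\mathbf{Id})$ via $\bullet$ (this is exactly the role of $\Phi_1$ in the paper), not by adding $\mathbf{P}$ after the multiplication block, since $\mathbf{P}$ is not available at the output layer of a depth-$2$ block without an extra identity channel; (b) composing $l$ depth-$2$ rounds via $\odot$ and absorbing the initialization and read-out into affine $\bullet$-concatenations gives depth $2l$, not $2l+1$, so to match the stated equality in (ii) you would either pad with one identity layer or simply record your (slightly better) depth; this does not affect (i) or (iii).
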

	Here, $\|\cdot\|_{2}$ denotes the spectral norm of a matrix. The main purpose of our proof is based on the fact that \textit{Neumann series}  $\sum_{k=0}^{m} \mathbf{A}^{k}$ converges exponentially fast to $\left(\mathbf{I d}_{\mathbb{R}^{d}}-\mathbf{A}\right)^{-1}$ as $m \rightarrow \infty$, for $\mathbf{A} \in \mathbb{R}^{d \times d}$, satisfying $\|\mathbf{A}\|_{2} \leq 1-\delta$ for some $\delta \in(0,1)$. To simplify the structure of the ReQU neural network, we consider $m=2^{l-1}$, and then, factorize polynomial $\sum_{k=0}^{2^l-1} \mathbf{A}^{k} = \prod_{k=0}^{l-1}(\textbf{A}^{2^k}+\mathbf{Id}_{\mathbb{R}^{d}})$. This reduces the problem to use a neural network to approximate the matrix multiplication.
	
	\subsection{ReQU Neural Network to Represent Matrix Multiplication}
	
	\begin{prop}\label{p}
		Let $d, n, l \in \mathbb{N}.$  There exists a ReQU neural network $\Phi_{\mathrm{mult} }^{d, n, l}$ with $n\cdot(d+l)$ dimensional input and $dl$ dimensional output, such that it satisfies the following properties:
		\begin{enumerate}[(i)]
			\item $\operatorname{matr}\left(\mathrm{R}_{\sigma_{2}}\left(\Phi_{\mathrm{mult}}^{d, n, l}\right)(\operatorname{vec}(\mathbf{A}), \operatorname{vec}(\mathbf{B}))\right)=\mathbf{A B}, $ for any $\mathbf{A}\in\mathbb{R}^{d\times n }, \mathbf{B}\in \mathbb{R}^{n\times l}$,
			\item $L\left(\Phi_{\mathrm{mult}}^{d, n, l}\right) =2 $,
			\item $M\left(\Phi_{\mathrm{mult}}^{d, n, l}\right) \leq 12 d n l$,
			\item $M_{1}\left(\Phi_{\mathrm{mult}}^{d, n, l}\right) \leq 8 d n l, \quad$ as well as $\quad M_{L\left(\Phi_{\mathrm{mult}}^{d, n, l}\right)}\left(\Phi_{\text {mult }}^{d, n, l}\right) \leq 4 d n l$.
		\end{enumerate}
	\end{prop}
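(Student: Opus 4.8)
The plan is to build $\Phi_{\mathrm{mult}}^{d,n,l}$ as a two-layer ReQU network that computes, for each output entry $(\mathbf{AB})_{i,j} = \sum_{k=1}^n \mathbf{A}_{i,k}\mathbf{B}_{k,j}$, the $dnl$ products $\mathbf{A}_{i,k}\mathbf{B}_{k,j}$ in the first (ReQU) layer and then sums the appropriate $n$ of them in the second (affine) layer. The key tool is Lemma \ref{x}, specifically the identity $xy = \boldsymbol{\beta}_1^T\sigma_2(\boldsymbol{\omega}_1 x + \boldsymbol{\gamma}_1 y)$, which represents a single product $xy$ exactly using $\sigma_2$ applied to $4$ nodes, with no restriction on the magnitudes of $x,y$. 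So for each triple $(i,k,j)$ with $i\le d$, $k\le n$, $j\le l$, I would devote a block of $4$ hidden nodes computing $\sigma_2(\boldsymbol{\omega}_1 \mathbf{A}_{i,k} + \boldsymbol{\gamma}_1 \mathbf{B}_{k,j})$; reading off $\operatorname{vec}(\mathbf{A})$ and $\operatorname{vec}(\mathbf{B})$ from the $n(d+l)$-dimensional input just means selecting the right two input coordinates into each block, i.e.\ the first weight matrix $\mathbf{A}_1$ has exactly $2$ nonzero entries per hidden node.

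First I would fix the layout: the input is $(\operatorname{vec}(\mathbf{A}),\operatorname{vec}(\mathbf{B}))\in\mathbb{R}^{nd}\times\mathbb{R}^{nl}$, the hidden layer has $4dnl$ nodes organized into $dnl$ blocks of size $4$ indexed by $(i,k,j)$, and the output has $dl$ coordinates indexed by $(i,j)$. The first-layer map sends block $(i,k,j)$ to $\boldsymbol{\omega}_1\mathbf{A}_{i,k}+\boldsymbol{\gamma}_1\mathbf{B}_{k,j}$; since $\boldsymbol{\omega}_1,\boldsymbol{\gamma}_1\in\mathbb{R}^4$ are $\pm1$ vectors, each of the $4dnl$ hidden nodes receives exactly $2$ nonzero weights and the first-layer bias is zero, giving $M_1 = 2\cdot 4dnl = 8dnl$. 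The second-layer map is affine with zero bias: output coordinate $(i,j)$ equals $\sum_{k=1}^n \boldsymbol{\beta}_1^T\sigma_2(\text{block }(i,k,j))$, so each output coordinate reads from $4n$ hidden nodes, giving $M_L = M_2 = 4n\cdot dl = 4dnl$. Summing, $M(\Phi_{\mathrm{mult}}^{d,n,l}) = M_1 + M_2 = 8dnl + 4dnl = 12dnl$, and $L = 2$ by construction; the correctness claim (i) follows immediately by applying Lemma \ref{x} entrywise and then $\operatorname{matr}$.

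There is essentially no deep obstacle here — the statement is a bookkeeping lemma — but the one point requiring care is the counting of nonzero weights, and in particular making sure the zero biases are genuinely zero so that $\mathbf{b}_1$ and $\mathbf{b}_2$ contribute nothing to $M_1$ and $M_2$. This works precisely because the $xy$ identity in Lemma \ref{x} needs no bias term (unlike the $x$ identity, which uses $\boldsymbol{\gamma}_1$ as a genuine bias); exploiting this is what keeps the bounds clean and linear in $dnl$. I would also note explicitly that the bound $M \le 12dnl$ in (iii) is consistent with (iv) since $M = M_1 + M_2 \le 8dnl + 4dnl$, and that the construction does not require $\mathbf{A},\mathbf{B}$ to have bounded norm — a property that will be essential later when this proposition is iterated $l-1$ times inside the Neumann-series factorization $\sum_{k=0}^{2^l-1}\mathbf{A}^k = \prod_{k=0}^{l-1}(\mathbf{A}^{2^k}+\mathbf{Id}_{\mathbb{R}^d})$ to build $\Phi_{\mathrm{inv};\epsilon}^d$ in Theorem \ref{3.4}.
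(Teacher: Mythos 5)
Your construction is correct and is essentially the same as the paper's: both devote one block of four ReQU nodes to each scalar product $\mathbf{A}_{i,k}\mathbf{B}_{k,j}$ via the $xy$ identity of Lemma \ref{x} and sum over $k$ in the affine output layer, yielding $M_1\le 8dnl$, $M_2\le 4dnl$, $L=2$. The only difference is presentational: the paper assembles the same network through its concatenation/parallelization calculus (Lemmas \ref{1} and \ref{11b}), whereas you write the weight matrices explicitly, which gives the identical bounds.
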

	\begin{proof}
		By Lemma \ref{x}, we can realize scalar multiplication with a ReQU neural network $\Phi$, satisfying the following equation:
		\begin{equation}\label{2}
			x y=R_{\sigma_2}(\Phi)(x,y)=\mathbf{\beta}_{1}^{T} \sigma_{2}\left(\mathbf{\omega}_{1} x+\mathbf{\gamma}_{1} y\right)
		\end{equation}
		and
		\begin{align}
			L\left(\Phi\right) &= 2,\label{3}\\
			M\left(\Phi\right) &\leq 12,\label{4}\\
			M_1\left(\Phi\right) &\leq 8,\label{5}\\
			M_{L\left(\Phi\right)}\left(\Phi\right) &\leq 4.\label{6}
		\end{align}
		
		First, for $i \in\{1, \ldots, d\}, k \in\{1, \ldots, n\}, j \in\{1, \ldots, l\}$, we define matrix $\mathbf{D}_{i, k, j}$ such that
		$$
		\mathbf{D}_{i, k, j}(\operatorname{vec}(\mathbf{A}), \operatorname{vec}(\mathbf{B}))=\left(\mathbf{A}_{i, k}, \mathbf{B}_{k, j}\right), \quad \forall  \mathbf{A} \in \mathbb{R}^{d \times n}, \mathbf{B} \in \mathbb{R}^{n \times l},
		$$
		and denote
		$$
		\Phi_{i, k, j }:=\Phi \bullet\left(\left(\mathbf{D}_{i, k, j}, \mathbf{0}_{\mathbb{R}^{2}}\right)\right).
		$$
		Then, by \eqref{2}, we have the following expression:
		\begin{equation}\label{66}
			\mathbf{A}_{i, k} \mathbf{B}_{k, j}=\mathrm{R}_{\sigma_{2}}\left(\Phi_{i, k, j }\right)(\operatorname{vec}(\mathbf{A}), \operatorname{vec}(\mathbf{B}))
		\end{equation}
		and  $L\left(\Phi_{i, k, j }\right)=L\left(\Phi\right)$. By Lemma \ref{1},  $\Phi_{i, k, j }$  also satisfies \eqref{4}-\eqref{6} with $\Phi$ replaced by $\Phi_{i,j,k}$. 
		
		Next, we define
		\begin{equation*}
			\Phi_{i, j}:=\left(\left(\mathbf{1}_{\mathbb{R}^{n}}, 0\right)\right) \bullet \mathrm{P}\left(\Phi_{i, 1, j }, \ldots, \Phi_{i, n, j }\right) \bullet\left(\left(\left(\begin{array}{c}
				\mathbf{I} \mathbf{d}_{\mathbb{R}^{n(d+l)}} \\
				\vdots \\
				\mathbf{I} \mathbf{d}_{\mathbb{R}^{n(d+l)}}
			\end{array}\right), \mathbf{0}_{\mathbb{R}^{n^{2}(d+l)}}\right)\right),
		\end{equation*}
		where $\mathbf{1}_{\mathbb{R}^{n}} \in \mathbb{R}^{n}$ is a vector with each entry equal to 1. Next, it follows that $\Phi_{i, j }$ is a neural network with $n \cdot(d+l)$ dimensional input and one-dimensional output, satisfying the following expression:
		\begin{equation}\label{ab}
			\mathrm{R}_{\sigma_{2}}\left(\Phi_{i, j }\right)(\operatorname{vec}(\mathbf{A}), \operatorname{vec}(\mathbf{B}))=\sum_{k=1}^{n} \mathrm{R}_{\sigma_{2}}\left(\Phi_{i, k, j }\right)(\operatorname{vec}(\mathbf{A}), \operatorname{vec}(\mathbf{B}))=\sum_{k=1}^{n} \mathbf{A}_{i, k} \mathbf{B}_{k, j} .
		\end{equation}
		and \eqref{3} with $\Phi$ replaced by $\Phi_{i, j}$. Moreover, by Lemma \ref{1}, Lemma \ref{11b},  and \eqref{4}-\eqref{6}, we obtain the following expression:
		\begin{equation}\label{33}
			M\left(\Phi_{i, j }\right) \leq M\left(\mathrm{P}\left(\Phi_{i, 1, j} , \ldots, \Phi_{i, n, j }\right)\right) \leq 12n,
		\end{equation}
		\begin{equation}\label{55}
			M_{1}\left(\Phi_{i, j }\right) \leq M_{1}\left(\mathrm{P}\left(\Phi_{i, 1, j }, \ldots, \Phi_{i, n, j }\right)\right) \leq 8n,
		\end{equation}
		\begin{equation}\label{54}
			M_{L\left(\Phi_{i, j }\right)}\left(\Phi_{i, j }\right) \leq M_{L\left(\Phi_{i, j }\right)}\left(\mathrm{P}\left(\Phi_{i, 1, j }, \ldots, \Phi_{i, n, j }\right)\right) \leq 4 n.
		\end{equation}
		Finally, we define the ReQU neural network $\Phi_{\mathrm{mult} }^{d, n, l}$ with $n\cdot(d+l)$ dimensional input and $dl$ dimensional output as follows:
		\begin{equation}\label{234}
			\Phi_{\mathrm{mult}}^{d, n, l}:=\mathrm{P}\left(\Phi_{1,1 }, \ldots, \Phi_{d, 1 }, \ldots, \Phi_{1, l }, \ldots, \Phi_{d, l }\right) \bullet\left(\left(\left(\begin{array}{c}
				\mathbf{Id}_{\mathbb{R}^{n(d+l)}} \\
				\vdots \\
				\mathbf{Id}_{\mathbb{R}^{n(d+l)}}
			\end{array}\right), \mathbf{0}_{\mathbb{R}^{d l n(d+l)}}\right)\right).
		\end{equation}
		Then, by \eqref{ab}, we have the following expression:
		\begin{equation*}
			\operatorname{matr}\left(\mathrm{R}_{\sigma_{2}}\left(\Phi_{\mathrm{mult}}^{d, n, l}\right)(\operatorname{vec}(\mathbf{A}), \operatorname{vec}(\mathbf{B}))\right) = \mathbf{A B}.
		\end{equation*}
		By Lemma \ref{11b},  \eqref{3} is satisfied with $\Phi$ replaced by $\Phi_{\mathrm{mult}}^{d, n, l}$. Then, by Lemma \ref{1},  Lemma \ref{11b}, and \eqref{33}, we have the following expression:
		\begin{equation*}
			M\left(\Phi_{\mathrm{mult}}^{d, n, l}\right) \leq 12 d n l.
		\end{equation*}
		Furthermore, by Lemma \ref{11b}, \eqref{55}, and \eqref{54}, we derive
		$$
		M_{1}\left(\Phi_{\mathrm{mult}}^{d, n, l}\right) \leq 8 d n l \text { and } M_{L\left(\Phi_{\mathrm{mult}}^{d, n, l}\right)}\left(\Phi_{\mathrm{mult}}^{d, n, l}\right) \leq 4 d n l.
		$$
		Thus, we finish the proof.
	\end{proof}
	
	In particular, let $d = n = l$; then, we have the following corollary.
	\begin{cor}\label{2.3}
		Let $d \in \mathbb{N}$.   There exists a  ReQU neural network $\Phi_{\mathrm{mult}}^{d, d, d}$ with $2d^2$ dimensional input, $d^2$ dimensional output such that the following properties are satisfied:
		\begin{enumerate}[(i)]
			\item$\operatorname{matr}\left(\mathrm{R}_{\sigma_{2}}\left(\Phi_{\mathrm{mult}}^{d, d, d}\right)(\operatorname{vec}(\mathbf{A}), \operatorname{vec}(\mathbf{B}))\right) = \mathbf{A B}$, for any $\mathbf{A}, \mathbf{B}\in\mathbb{R}^{d\times d }$,
			\item $L\left(\Phi_{\mathrm{mult}}^{d, d, d}\right)=2$,
			\item $M\left(\Phi_{\mathrm{mult}}^{d, d, d}\right) \leq 12 d^{3}$,
			\item $M_{1}\left(\Phi_{\mathrm{mult}}^{d, d, d}\right) \leq 8 d^{3}, \quad$ as well as $M_{L\left(\Phi_{\mathrm{mult}}^{d, d, d}\right)}\left(\Phi_{\mathrm{mult}}^{d, d, d}\right) \leq 4 d^{3}$.
		\end{enumerate}			
	\end{cor}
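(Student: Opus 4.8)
The plan is simply to invoke Proposition~\ref{p} with the parameter choice $n = l = d$. Under this substitution the input dimension $n\cdot(d+l)$ collapses to $2d^2$ and the output dimension $dl$ to $d^2$, which matches the dimensions asserted in the corollary; part~(i) of Proposition~\ref{p} then yields $\operatorname{matr}\big(\mathrm{R}_{\sigma_2}(\Phi_{\mathrm{mult}}^{d,d,d})(\operatorname{vec}(\mathbf{A}),\operatorname{vec}(\mathbf{B}))\big) = \mathbf{AB}$ for all $\mathbf{A},\mathbf{B}\in\mathbb{R}^{d\times d}$, part~(ii) gives $L = 2$, and the bounds $M \le 12dnl$, $M_1 \le 8dnl$, $M_{L} \le 4dnl$ of parts~(iii)--(iv) become exactly $M \le 12d^3$, $M_1 \le 8d^3$, $M_{L} \le 4d^3$. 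Thus nothing new needs to be constructed, and the proof is a one-line specialization.

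The reason to isolate this square case is that it is the only instance of matrix multiplication that appears in the sequel: the Neumann-series factorization $\sum_{k=0}^{2^l-1}\mathbf{A}^k = \prod_{k=0}^{l-1}(\mathbf{A}^{2^k}+\mathbf{Id}_{\mathbb{R}^d})$ used to build $\Phi_{\mathrm{inv};\epsilon}^{d}$ only ever multiplies a $d\times d$ matrix by a $d\times d$ matrix. Consequently there is no genuine obstacle here; the only thing to verify is the bookkeeping of the dimension and weight counts under $n=l=d$, which is immediate.

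If one preferred a self-contained argument one could instead repeat the proof of Proposition~\ref{p} verbatim with $n=l=d$: realize each of the $d^3$ scalar products $\mathbf{A}_{i,k}\mathbf{B}_{k,j}$ by the two-layer ReQU network of Lemma~\ref{x}, parallelize them via Lemma~\ref{11b}, and contract over $k$ to obtain the $d^2$ entries of $\mathbf{AB}$; the depth stays $2$ and the weight counts aggregate to $12d^3$, $8d^3$, $4d^3$ exactly as in the general case, with no approximation error since ReQU represents $xy$ exactly. Either route, the take-away is that the square matrix product is an atomic operation of constant depth and $\mathcal{O}(d^3)$ weights.
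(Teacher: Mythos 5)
Your proposal is correct and matches the paper exactly: the paper introduces Corollary \ref{2.3} with the single line ``In particular, let $d = n = l$; then, we have the following corollary,'' i.e., it is precisely the specialization of Proposition \ref{p} that you describe. The dimension and weight-count bookkeeping under $n=l=d$ is all that is needed, and your verification of it is accurate.
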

	Therefore, we can simulate matrix squaring as follows: 
	\begin{defn}\label{d}
		For $d \in \mathbb{N}$, we define the ReQU neural network
		$$
		\Phi_{2 }^{d}:=\Phi_{\mathrm{mult}}^{d, d, d} \bullet\left(\left(\left(\begin{array}{l}
			\mathbf{I} \mathbf{d}_{\mathbb{R}^{d^{2}}} \\
			\mathbf{I} \mathbf{d}_{\mathbb{R}^{d^{2}}}
		\end{array}\right), \mathbf{0}_{\mathbb{R}^{2 d^{2}}}\right)\right),
		$$
		which has $d^{2}$ dimensional input and $d^{2}$ dimensional output.
	\end{defn}
	Then, according to Proposition \ref{p}, for all $d \in \mathbb{N}$, we have
	\begin{prop} The ReQU neural network $\Phi_{2 }^{d}$ satisfies the following properties:
		\begin{enumerate}[(i)]
			\item$\operatorname{matr}\left(\mathrm{R}_{\sigma_{2}}\left(\Phi_{2 }^{d}\right)(\operatorname{vec}(\mathbf{A}))\right) = \mathbf{A}^{2},$  for any $\mathbf{A}\in\mathbb{R}^{d\times d }$,
			\item $L\left(\Phi_{2 }^{d}\right) =2$,
			\item$M\left(\Phi_{2 }^{d}\right) \leq 12 d^{3}, $
			\item$M_{1}\left(\Phi_{2 }^{d}\right) \leq 8 d^{3}, \quad$ as well as $\quad M_{L\left(\Phi_{2 }^{d}\right)}\left(\Phi_{2 }^{d}\right) \leq 4 d^{3}$.
		\end{enumerate}
	\end{prop}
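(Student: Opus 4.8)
The plan is to verify each of the four properties directly from Definition~\ref{d}, which expresses $\Phi_2^d$ as the ordinary concatenation $\Phi_{\mathrm{mult}}^{d,d,d} \bullet \Phi_0$, where $\Phi_0 := \left(\left(\left(\begin{smallmatrix}\mathbf{Id}_{\mathbb{R}^{d^2}}\\ \mathbf{Id}_{\mathbb{R}^{d^2}}\end{smallmatrix}\right), \mathbf{0}_{\mathbb{R}^{2d^2}}\right)\right)$ is a single-layer network mapping $\mathbf{x}\mapsto(\mathbf{x},\mathbf{x})$. Property~(i) follows immediately: for $\mathbf{A}\in\mathbb{R}^{d\times d}$, the prepended layer sends $\operatorname{vec}(\mathbf{A})$ to $(\operatorname{vec}(\mathbf{A}),\operatorname{vec}(\mathbf{A}))$, and then Corollary~\ref{2.3}(i) with $\mathbf{B}=\mathbf{A}$ gives $\operatorname{matr}\!\big(\mathrm{R}_{\sigma_2}(\Phi_2^d)(\operatorname{vec}(\mathbf{A}))\big)=\mathbf{A}\mathbf{A}=\mathbf{A}^2$.

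For property~(ii), recall that ordinary concatenation of networks with $L_1$ and $L_2$ layers yields $L_1+L_2-1$ layers (Definition~\ref{neural networkdefinition}). Here $\Phi_0$ has one layer and $\Phi_{\mathrm{mult}}^{d,d,d}$ has two layers by Corollary~\ref{2.3}(ii), so $L(\Phi_2^d)=2+1-1=2$. The key point for properties~(iii) and~(iv) is that the prepended matrix $\mathbf{D}:=\left(\begin{smallmatrix}\mathbf{Id}_{\mathbb{R}^{d^2}}\\ \mathbf{Id}_{\mathbb{R}^{d^2}}\end{smallmatrix}\right)\in\mathbb{R}^{2d^2\times d^2}$ has exactly one nonzero entry in each row, so Lemma~\ref{1} (the second part, with $\Phi=\Phi_{\mathrm{mult}}^{d,d,d}$) applies and gives $M_k\big(\Phi_{\mathrm{mult}}^{d,d,d}\bullet((\mathbf{D},\mathbf{0}))\big)\le M_k(\Phi_{\mathrm{mult}}^{d,d,d})$ for every layer $k$, hence in particular $M(\Phi_2^d)\le M(\Phi_{\mathrm{mult}}^{d,d,d})$, $M_1(\Phi_2^d)\le M_1(\Phi_{\mathrm{mult}}^{d,d,d})$, and $M_{L(\Phi_2^d)}(\Phi_2^d)\le M_{L(\Phi_{\mathrm{mult}}^{d,d,d})}(\Phi_{\mathrm{mult}}^{d,d,d})$. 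Combining with the bounds $M(\Phi_{\mathrm{mult}}^{d,d,d})\le 12d^3$, $M_1(\Phi_{\mathrm{mult}}^{d,d,d})\le 8d^3$, and $M_{L}(\Phi_{\mathrm{mult}}^{d,d,d})\le 4d^3$ from Corollary~\ref{2.3}(iii)--(iv) yields (iii) and (iv).

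There is no real obstacle here; the only thing to check carefully is the applicability of the second half of Lemma~\ref{1}. One must confirm that each row of $\mathbf{D}$ indeed contains at most one nonzero entry: the $j$-th row of the upper block is $\mathbf{e}_j^T$ and the $j$-th row of the lower block is again $\mathbf{e}_j^T$, so every one of the $2d^2$ rows has exactly one nonzero entry, and the hypothesis of Lemma~\ref{1} is met with $n=d^2$, $d\rightsquigarrow 2d^2$. Since the concatenation $\Phi_{\mathrm{mult}}^{d,d,d}\bullet((\mathbf{D},\mathbf{0}))$ merges $\mathbf{D}$ into the first layer of $\Phi_{\mathrm{mult}}^{d,d,d}$, which by Proposition~\ref{p}(iv) has the correct input dimension $2d^2=n(d+l)$ with $n=l=d$, the composition is well-defined. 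This completes the proof.
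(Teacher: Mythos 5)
Your proof is correct and follows essentially the same route the paper intends: the paper states this proposition as an immediate consequence of Proposition~\ref{p} (via Corollary~\ref{2.3}), and your argument simply makes explicit the two ingredients that justify this, namely that the prepended duplication matrix realizes $\operatorname{vec}(\mathbf{A})\mapsto(\operatorname{vec}(\mathbf{A}),\operatorname{vec}(\mathbf{A}))$ so that (i) and (ii) follow from Corollary~\ref{2.3} and Definition~\ref{neural networkdefinition}, and that each row of $\bigl(\begin{smallmatrix}\mathbf{Id}_{\mathbb{R}^{d^2}}\\ \mathbf{Id}_{\mathbb{R}^{d^2}}\end{smallmatrix}\bigr)$ has exactly one nonzero entry so the second part of Lemma~\ref{1} transfers the layerwise weight bounds, giving (iii) and (iv). No gaps.
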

	
	\subsection{ReQU Neural Network to Represent the Power Function of the Matrix}
	Based on the aforementioned results, we can represent map $\mathbf{A} \mapsto \mathbf{A}^{2^j}$ using a ReQU neural network, for arbitrary $j \in \mathbb{N}$.
	\begin{prop}\label{345}
		Let $d \in \mathbb{N}, j \in \mathbb{N}.$ There exists a ReQU neural network $ \Phi_{2^{j} }^{d}$ with $d^{2}$ dimensional input and output that satisfies the following properties:
		\begin{enumerate}[(i)]
			\item$\operatorname{matr}\left(\mathrm{R}_{\sigma_{2}}\left(\Phi_{2^{j}}^{d}\right)(\operatorname{vec}(\mathbf{A}))\right)=\mathbf{A}^{2^{j}},$ for any $\mathbf{A}\in\mathbb{R}^{d\times d }$,
			\item $L\left(\Phi_{2^{j} }^{d}\right) = 2 j,$
			\item $M\left(\Phi_{2^{j} }^{d}\right) \leq 64j d^{3},$
			\item $M_{1}\left(\Phi_{2^{j} }^{d}\right) \leq 8 d^{3}, \quad$ as well as $\quad M_{L\left(\Phi^{d}_{2^ j}\right)}\left(\Phi_{2^{j} }^{d}\right) \leq 4 d^{3}.$
		\end{enumerate}

	\end{prop}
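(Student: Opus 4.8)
The plan is to argue by induction on $j$, using the matrix-squaring network $\Phi_2^d$ of Definition \ref{d} together with the complexity bounds established for it in the proposition preceding the current statement, and combining copies of it via the sparse concatenation $\odot$ whose calculus is given in Lemma \ref{11}. The recursion is powered by the elementary identity $\mathbf{A}^{2^{j+1}}=\bigl(\mathbf{A}^{2^{j}}\bigr)^{2}$: iterating the squaring map $j$ times produces $\mathbf{A}\mapsto\mathbf{A}^{2^{j}}$.

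For the base case $j=1$ I would simply take $\Phi_{2^{1}}^{d}:=\Phi_2^{d}$. The cited proposition for $\Phi_2^d$ gives $L(\Phi_2^d)=2=2\cdot 1$, $M(\Phi_2^d)\le 12 d^3\le 64 d^3$, $M_1(\Phi_2^d)\le 8 d^3$, and $M_{L(\Phi_2^d)}(\Phi_2^d)\le 4 d^3$, so all four assertions (i)--(iv) hold at $j=1$.

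For the inductive step, assume $\Phi_{2^{j}}^{d}$ has been constructed satisfying (i)--(iv), and set
\[
\Phi_{2^{j+1}}^{d}:=\Phi_{2}^{d}\odot\Phi_{2^{j}}^{d}.
\]
Property (i) follows from Lemma \ref{11}(i) together with the inductive hypothesis: $\operatorname{matr}\bigl(\mathrm{R}_{\sigma_2}(\Phi_{2^{j+1}}^{d})(\operatorname{vec}(\mathbf{A}))\bigr)=\operatorname{matr}\bigl(\mathrm{R}_{\sigma_2}(\Phi_2^d)(\operatorname{vec}(\mathbf{A}^{2^{j}}))\bigr)=(\mathbf{A}^{2^{j}})^{2}=\mathbf{A}^{2^{j+1}}$. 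Property (ii) is immediate from Lemma \ref{11}(ii): $L(\Phi_{2^{j+1}}^{d})=L(\Phi_2^d)+L(\Phi_{2^{j}}^{d})=2+2j=2(j+1)$. For the weight count I would apply Lemma \ref{11}(iii) with $\Phi^{1}=\Phi_2^d$ and $\Phi^{2}=\Phi_{2^{j}}^{d}$, noting $\dim_{\mathrm{out}}(\Phi_{2^{j}}^{d})=d^{2}$:
\[
M(\Phi_{2^{j+1}}^{d})\le M(\Phi_2^d)+M(\Phi_{2^{j}}^{d})+4M_1(\Phi_2^d)+4M_{L(\Phi_{2^{j}}^{d})}(\Phi_{2^{j}}^{d})+4d^{2}\le 12d^{3}+64jd^{3}+32d^{3}+16d^{3}+4d^{2},
\]
and since $60d^{3}+4d^{2}\le 64 d^{3}$ for every $d\ge 1$, the right-hand side is at most $64(j+1)d^{3}$, which is (iii). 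Finally, because $L(\Phi_{2^{j}}^{d})=2j\ge 2$, parts (iv) and (v) of Lemma \ref{11} apply and yield $M_1(\Phi_{2^{j+1}}^{d})=M_1(\Phi_{2^{j}}^{d})\le 8d^{3}$ and $M_{L(\Phi_{2^{j+1}}^{d})}(\Phi_{2^{j+1}}^{d})=M_{L(\Phi_2^d)}(\Phi_2^d)\le 4d^{3}$, establishing (iv).

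The computation is essentially mechanical, and I do not anticipate a genuine obstacle. The only points meriting a moment of care are: (a) that the first- and last-layer weight counts of $\Phi_{2^{j}}^{d}$ stay bounded by $8d^{3}$ and $4d^{3}$ uniformly in $j$ --- this is precisely why the $4M_1+4M_L$ correction in Lemma \ref{11}(iii) contributes only a constant multiple of $d^{3}$ at each step rather than accumulating; (b) that the hypothesis $L(\Phi_{2^{j}}^{d})\ge 2$ needed to invoke Lemma \ref{11}(iv)--(v) holds, which is guaranteed by (ii) of the inductive hypothesis since $j\ge 1$; and (c) the bookkeeping of constants so that the clean bounds $8d^{3}$, $4d^{3}$ and $64jd^{3}$ are preserved exactly through the induction, for which the inequality $60d^{3}+4d^{2}\le 64d^{3}$ is the key slack.
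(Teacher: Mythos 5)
Your proposal is correct and takes essentially the same route as the paper's proof: induction on $j$ with base case $\Phi_{2}^{d}$ and inductive step $\Phi_{2^{j+1}}^{d}:=\Phi_{2}^{d}\odot\Phi_{2^{j}}^{d}$, invoking Lemma \ref{11} in exactly the same way for the layer count, the total weight bound, and the first/last-layer weight counts. The arithmetic ($64jd^{3}+60d^{3}+4d^{2}\le 64(j+1)d^{3}$) matches the paper's computation.
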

	\begin{proof}
		We prove the statement by induction. For $j=1$, the statement proceeds by selecting $\Phi_{2 }^{d}$ as in Definition \ref{d}. Next, we assume that the statement holds for an arbitrary, but fixed $j \in \mathbb{N}$; that is, there exists a neural network $\Phi_{2^{j} }^{d}$ satisfying (i)-(iv). Next, we define the following expression:
		\begin{equation}\label{2^j}
			\Phi_{2^{j+1}}^{d}:=\Phi_{2 }^{d} \odot \Phi_{2^{j}}^{d}.
		\end{equation}
		By inductive hypothesis,
		\begin{equation*}
			\operatorname{matr}\left(\mathrm{R}_{\sigma_{2}}\left(\Phi_{2^{j+1} }^{d}\right)(\operatorname{vec}(\mathbf{A}))\right)=\left(\operatorname{matr}\left(\mathrm{R}_{\sigma_{2}}\left(\Phi_{2^{j} }^{d}\right)(\operatorname{vec}(\mathbf{A}))\right)\right)^{2}=\mathbf{A}^{2^{j+1}}.
		\end{equation*}
		We estimate the size of $\Phi_{2^{j+1} }^{d}$. By Lemma \ref{11}, we obtain the following expression:
		\begin{equation*}
			\begin{aligned}
				L\left(\Phi_{2^{j+1} }^{d}\right)
				&=L\left(\Phi_{2}^{d}\right)+L\left(\Phi_{2^{j} }^{d}\right)= 2+2j =2(j+1),
			\end{aligned}
		\end{equation*}
		and
		\begin{equation*}
			\begin{aligned}
				M\left(\Phi_{2^{j+1}}^{d}\right) &\leq M\left(\Phi_{2}^{d}\right)+M\left(\Phi_{2^{j} }^{d}\right)+4M_{1}\left(\Phi_{2 }^{d}\right)+4M_{L\left(\Phi_{2 ^ j }^{d}\right)}\left(\Phi_{2^{j} }^{d}\right)+4d^2\\
				&\leq 12d^3+64j d^3 + 4(4d^3 + 8d^3)+4d^2\\
				&\leq 64(j+1)d^3.
			\end{aligned}
		\end{equation*}
		Moreover,
		\begin{equation*}
			M_{1}\left(\Phi_{2^{j+1} }^{d}\right)=M_{1}\left(\Phi_{2^{j} }^{d}\right) \leq 8 d^{3},
		\end{equation*}
		\begin{equation*}
			M_{L\left(\Phi_{2^{j+1}}^{ d}\right)}\left(\Phi_{2^{j+1} }^{ d}\right)=M_{L\left(\Phi_{2 }^{d}\right)}\left(\Phi_{2 }^{d}\right) \leq 4 d^{3}.
		\end{equation*}
		The proof is completed.
	\end{proof}

	\subsection{Proof of Theorem \ref{3.4}}
	Next, we construct a neural network $\Phi_{\mathrm {inv}; \epsilon}^{d}$ that approximates the inversion operator, that is, the map $\mathbf{A} \mapsto \left(\mathbf{I} \mathbf{d}_{\mathbb{R}^{d}}-\mathbf{A}\right)^{-1}$ up to accuracy $\epsilon>0$. Based on the properties of the partial sums of the \textit{Neumann series}, for every $\epsilon,\delta\in(0,1)$ and $\mathbf{A}\in\mathbb{R}^{d\times d}$ satisfying $\|\mathbf{A}\|_2\leq 1-\delta$, we have the following expression:
	\begin{equation*}
		\begin{aligned}
			\left\|\left(\mathbf{I} \mathbf{d}_{\mathbb{R}^{d}}-\mathbf{A}\right)^{-1}-\sum_{k=0}^{2^l-1} \mathbf{A}^{k}\right\|_{2} &=\left\|\left(\mathbf{I} \mathbf{d}_{\mathbb{R}^{d}}-\mathbf{A}\right)^{-1} \mathbf{A}^{2^l}\right\|_{2} \leq\left\|\left(\mathbf{Id}_{\mathbb{R}^{d}}-\mathbf{A}\right)^{-1}\right\|_{2}\|\mathbf{A}\|_{2}^{2^l} \\
			& \leq \frac{1}{1-(1-\delta)} \cdot(1-\delta)^{2^l}=\frac{(1-\delta)^{2^l}}{\delta}\leq \epsilon
		\end{aligned}.
	\end{equation*}
	Here, we have used the fact
	\begin{equation*}
		l(\epsilon ,\delta )=\left\lceil\log_{2}(\log_{1-\delta}(\delta\epsilon)+1)\right\rceil\geq \log_{2}(\log_{1-\delta}(\delta\epsilon)+1).
	\end{equation*}
	Thus, it suffices to construct a ReQU neural network representing $\sum_{k=0}^{2^l-1} \mathbf{A}^{k}$.
	Note that
	\begin{equation*}
		\sum_{k=0}^{2^l-1} \mathbf{A}^{k}=\prod_{i=0}^{l-1}(\mathbf{A}^{2^i}+I);
	\end{equation*}
	by Proposition \ref{345} and Corollary \ref{2.3}, we have ReQU neural networks $ \Phi_{2^{j} }^{d} $ and $\Phi_{\mathrm{mult}}^{d, d, d}$ such that
	\begin{equation*}
		\operatorname{matr}\left(\mathrm{R}_{\sigma_{2}}\left(\Phi_{2^{j}}^{d}\right)(\operatorname{vec}(\mathbf{A}))\right) = \mathbf{A}^{2^{j}},\quad \forall j\in \mathbb{N},
	\end{equation*}
	\begin{equation*}
		\operatorname{matr}\left(R_{\sigma_{2}}\left(\Phi_{\mathrm{mult}}^{d, d, d}\right)(\operatorname{vec}(\mathbf{A}), \operatorname{vec}(\mathbf{B}))\right) = \mathbf{A B}.
	\end{equation*}
	Let  $ \textbf{b} =\textbf{vec}(\mathbf{I} \mathbf{d}_{\mathbb{R}^{d^{2}}})  $ and define $ \Phi_{1}:=((\mathbf{I} \mathbf{d}_{\mathbb{R}^{d^2}},\textbf{b})) $ and
	
	\begin{equation*}
		\pi_{l}:=\left\{\begin{array}{ll}
			\Phi_{1}, & \text{if}~l = 1,  \\
			\Phi_{\mathrm{mult}}^{d, d, d}\odot P(\pi_{l-1},\Phi_{1}\odot\Phi_{2^{l-1}}^{d}), & \text{if}~l \geq2.
		\end{array}\right.
	\end{equation*}
	Next, we set
	
	\begin{equation*}
		\Phi_{\mathrm{inv} }^{d}:=\pi_{l}\bullet \left(\left(\left(\begin{array}{c}
			\mathbf{I d}_{\mathbb{R}^{d^{2}}} \\
			\vdots \\
			\mathbf{I d}_{\mathbb{R}^{d^{2}}}
		\end{array}\right), \mathbf{0}_{\mathbb{R}^{l d^{2}}}\right)\right),
	\end{equation*}
	which is a ReQU neural network with $d^2$ dimensional input and output.
	By the definition of $\pi_{l}$, for every $\mathbf{A}\in\mathbb{R}^{d\times d}$ such that $\|\mathbf{A}\|_2\leq 1-\delta$, we have
	$$\operatorname{matr}\left(\mathrm{R}_{\sigma_{2}}\left(\Phi_{\mathrm{inv} }^{d}\right)(\operatorname{vec}(\mathbf{A}))\right) = \sum_{k=0}^{2^l-1} \mathrm{A}^{k},$$
	which implies (i) of Theorem \ref{3.4}.
	
	Next, we analyze the size of the resulting neural network $\Phi_{\mathrm{inv} }^{d}$. First, by Lemma \ref{11}, Lemma \ref{11b}, and Proposition \ref{345}, we have the following expression:
	\begin{align*}
		\quad L\left(\Phi_{\mathrm{inv} }^{d}\right)= L(\pi_{l})=L(\Phi_{\mathrm{mult}}^{d, d, d})+\max\{L(\pi_{l-1}),L(\Phi_{1} \odot \Phi_{2^{l-1}}^{d})\}=2+\max\{L(\pi_{l-1}),2l-1\}
	\end{align*}
	Note that $L(\pi_{2}) = 5$, by induction, we have
	$$L(\pi_{l}) = 2l+1.$$
	For the nonzero weights, we have
	\begin{equation}\label{weight}
		\begin{aligned} M\left(\Phi_{\mathrm{inv} }^{d}\right)&\leq M(\pi_{l})=M(\Phi_{\mathrm{mult}}^{d, d, d}\odot P(\pi_{l-1},\Phi_{1}\odot\Phi_{2^{l-1}}^{d}))\\
			&\leq M(\Phi_{\mathrm{mult}}^{d, d, d})+M(P(\pi_{l-1},\Phi_{1}\odot\Phi_{2^{l-1}}^{d}))\\
			&\quad+4M_1(\Phi_{\mathrm{mult}}^{d, d, d})+4M_{L(P(\pi_{l-1},\Phi_{1}\odot\Phi_{2^{l-1}}^{d}))}(P(\pi_{l-1},\Phi_{1}\odot\Phi_{2^{l-1}}^{d}))+4d^2\\
			&\leq 12d^3+M(P(\pi_{l-1},\Phi_{1}\odot\Phi_{2^{l-1}}^{d}))+4\times 8d^3\\
			&\quad+4M_{L(P(\pi_{l-1},\Phi_{1}\odot\Phi_{2^{l-1}}^{d}))}(P(\pi_{l-1},\Phi_{1}\odot\Phi_{2^{l-1}}^{d}))+4d^2\\
			&\leq 44d^3+4d^2+M(P(\pi_{l-1},\Phi_{1}\odot\Phi_{2^{l-1}}^{d}))\\
			&\quad +4 \max\{4\mathrm{dim}_{\mathrm{out}}(\pi_{l-1}),M_{L(\pi_{l-1})}(\pi_{l-1})\}\\
			&\quad+ 4 \max\{4\mathrm{dim}_{\mathrm{out}}(\Phi_{1}\odot\Phi^{d}_{2 ^ {l-1} }),M_{L(\Phi_{1}\odot\Phi^{d}_{2 ^ {l-1} })}(\Phi_{1}\odot\Phi^{d}_{2 ^ {l-1} })\}.\\
		\end{aligned}
	\end{equation}
	Here, by \eqref{3.9} and the definition of $\Phi_{1}$, we have
	\begin{align*}
		M_{L(\Phi_{1}\odot\Phi^{d}_{2 ^ {l-1} })}(\Phi_{1}\odot\Phi^{d}_{2 ^ {l-1} }) = 4 \| \mathbf{Id}_{\mathbb{R}^{d^2}}\|_0 +\|\mathbf{b}\|_0 = 4d^2 + d^2 =5 d^2,
	\end{align*}
	and
	\begin{align}\label{weight3}
		\max\{4\mathrm{dim}_{\mathrm{out}}(\Phi_{1}\odot\Phi^{d}_{2 ^ {l-1} }),M_{L(\Phi_{1}\odot\Phi^{d}_{2 ^ {l-1} })}(\Phi_{1}\odot\Phi^{d}_{2 ^ {l-1} })\} = \max\{4d^2,5d^2\} = 5d^2.
	\end{align}
	By the definition of $\pi_{l-1}$, Lemma \ref{11}, and Corollary \ref{2.3}, we have
	\begin{align}\label{weight2}
		\max\{4\mathrm{dim}_{\mathrm{out}}(\pi_{l-1}),M_{L(\pi_{l-1})}(\pi_{l-1})\} = \max\{4d^2,M_{L\left(\Phi_{\mathrm{mult}}^{d, d, d}\right)}\left(\Phi_{\mathrm{mult}}^{d, d, d}\right) \} \leq 4d^3.
	\end{align}
	Moreover, 
	\begin{equation}\label{weight1}
		\begin{aligned}
			&M(P(\pi_{l-1},\Phi_{1}\odot\Phi_{2^{l-1}}^{d}))\\
			\leq &M(\pi_{l-1}) + M(\Phi_{1}\odot\Phi_{2^{l-1}}^{d}) + 4M_{L(\pi_{l-1})}(\pi_{l-1})+ 4M_{L(\Phi_{1}\odot\Phi_{2^{l-1}}^{Z, d})}(\Phi_{1}\odot\Phi_{2^{l-1}}^{d})\\
			& +\mathrm{dim}_{\text {out }}(\pi_{l-1})(20L(\pi_{l-1})+8)+\mathrm{dim}_{\text {out}}(\Phi_{1}\odot\Phi_{2^{l-1}}^{d})(20L(\Phi_{1}\odot\Phi_{2^{l-1}}^{d})+8)\\
			\leq& M(\pi_{l-1})+(2d^2+64(l-1)d^3+4\times 2d^2+4\time 4\times 4d^3+4d^2)+4(4d^3+5d^2)\\
			&+80ld^2-24d^2\\
			=& M(\pi_{l-1}) +64ld^3 -32d^3 +80ld^2  + 10d^2.
		\end{aligned}
	\end{equation}
	
	By substituting \eqref{weight3}-\eqref{weight1} into \eqref{weight}, we have 
	\begin{equation*}
		\begin{aligned}
			M(\pi_{l})&\leq(64l+28)d^3+(80l+34)d^2+M(\pi_{l-1})\\
			&\leq \sum_{j=2}^{l}((64j+28)d^3+(80j+34)d^2)+M(\pi_{1})\\
			&\leq \left [64\frac{(2+l)(l-1)}{2} +28(l-1)\right ]d^3+\left [ 80\frac{(2+l)(l-1)}{2}+34(l-1)\right ]d^2+2d^2\\
			&=(32l^2+60l-80)d^3+(40l^2-44l-112)d^2\\
			&\leq C_{\mathrm{inv}}d^3l^2,
		\end{aligned}
	\end{equation*}
	where $C_{\mathrm{inv}}$ is a universal constant. The proof is completed.

	\section{Theoretical Analysis of the ReQU Neural Network to Solve Parametric PDE}
	In this section, we use ReQU neural networks to approximate the parameter-dependent solution of parametric PDEs, and derive the complexity bounds. As mentioned previously, for any $\tilde{\epsilon} \geq \hat{\epsilon}>0$, there exists reduced basis space $H_{\tilde{\epsilon}}^{\mathrm{rb}} = \operatorname{span}\{\psi_i\}_{i=1}^{d(\tilde{\epsilon})}$ and corresponding reduced basis solution
	$$u_{y, \tilde{\epsilon}}^{\mathrm{rb}}=\sum_{i=1}^{d(\tilde{\epsilon})}\left(\mathbf{u}_{y, \tilde{\epsilon}}^{\mathrm{rb}}\right)_{i} \psi_{i}=\sum_{j=1}^{D}\left(\tilde{\mathbf{u}}^{\mathrm{h}}_{y, \tilde{\epsilon}}\right)_{j} e_{j},$$
	such that
	\begin{equation*}
		\sup _{y \in \mathcal{Y}}\left\|u_{y}-u_{y, \tilde{\epsilon}}^{\mathrm{rb}}\right\|_{H} \leq \frac{\alpha}{\beta} \tilde{\epsilon},
	\end{equation*}
	Thus, it suffices to construct ReQU neural networks approximating the maps 
	\begin{equation*}
		\mathcal{Y} \rightarrow \mathbb{R}^{d(\tilde{\epsilon})}: \quad y \mapsto \mathbf{u}_{y, \tilde{\epsilon}}^{\mathrm{rb}}, \text { and }\mathcal{Y} \rightarrow \mathbb{R}^{m}: \quad y \mapsto \tilde{\mathbf{u}}^{\mathrm{h}}_{y, \tilde{\epsilon}} .
	\end{equation*}
	Based on the discussion in Section \ref{section2}, we have the following expression:
	\begin{equation*}
		\mathbf{u}_{y, \tilde{\epsilon}}^{\mathrm{rb}}:=\left(\mathbf{B}_{y, \tilde{\epsilon}}^{\mathrm{rb}}\right)^{-1} \mathbf{f}_{y, \tilde{\epsilon}}^{\mathrm{rb}},
	\end{equation*}
	where
	\begin{equation*}
		\mathbf{B}_{y, \tilde{\epsilon}}^{\mathrm{rb}}:=\left(B_{y}\left(\psi_{j}, \psi_{i}\right)\right)_{i, j=1}^{d(\tilde{\epsilon})}, \quad \mathbf{f}_{y, \tilde{\epsilon}}^{\mathrm{rb}}:=\left(f_{y}\left(\psi_{i}\right)\right)_{i=1}^{d(\tilde{\varepsilon})} \quad \text { for all } y \in \mathcal{Y}.
	\end{equation*}
	
	Our strategy is to first approximate $(\mathbf{B}_{y, \tilde{\epsilon}}^{\mathrm{rb}})^{-1}$, $\mathbf{f}_{y, \tilde{\epsilon}}^{\mathrm{rb}}$, and then, approximate the multiplication of them. By Theorem \ref{3.4}, we can approximate $(\mathbf{B}_{y, \tilde{\epsilon}}^{\mathrm{rb}})^{-1}$ by applying the neural network $\Phi_{\mathrm{inv} ; \epsilon}^{d}$ to the matrix $\mathbf{I d}_{\mathbb{R}^{d(\tilde{\epsilon})}}-\lambda \mathbf{B}_{y, \tilde{\epsilon}}^{\mathrm{rb}}$. Here, $\lambda$ is the scaling factor to ensure $\left\|\mathbf{I d}_{\mathbb{R}^{d(\tilde{\epsilon})}}-\lambda \mathbf{B}_{y, \tilde{\epsilon}}^{\mathrm{rb}}\right\|_{2} <1$. We can select $\lambda:=\left(\alpha + \beta\right)^{-1}$ (independent of $y$ and $d(\tilde{\epsilon}))$ such that with $\delta:=\lambda \beta$
	$$
	\left\|\mathbf{I} \mathbf{d}_{\mathbb{R}^{d(\tilde{\varepsilon})}}-\lambda \mathbf{B}_{y, \tilde{\epsilon}}^{\mathrm{rb}}\right\|_{2}=\max _{\mu \in \sigma\left(\mathbf{B}_{y, \tilde{\varepsilon}}^{\mathrm{rb}}\right)}|1-\lambda \mu| \leq \max _{\mu \in\left[\beta, \alpha\right]}|1-\lambda \mu|=1-\lambda \beta=1-\delta<1
	$$
	for all $y \in \mathcal{Y}, \tilde{\epsilon}>0$.
	We fix these values of $\lambda$ and $\delta$ for the remainder of this paper. To proceed, we state the assumptions on the approximability of the map $\mathbf{B}_{\cdot, \tilde{\epsilon}}^{\mathrm{rb}}$ and $\mathbf{f}_{\cdot, \tilde{\epsilon}}^{\mathrm{rb}}$.
	\begin{assu}\label{4.1}
		For any $\tilde{\epsilon} \geq \hat{\epsilon}, \epsilon>0$, and a corresponding reduced basis $\left(\psi_{i}\right)_{i=1}^{d(\tilde{\epsilon})}$, there exists a neural network $ \Phi_{\tilde{\epsilon}, \epsilon}^{\mathbf{B}}$ with $p$ dimensional input and $d(\tilde{\epsilon})^{2}$ dimensional output such that
		$$
		\sup _{y \in \mathcal{Y}}\left\|\lambda \mathbf{B}_{y, \tilde{\epsilon}}^{\mathrm{rb}}-\operatorname{matr}\left(\mathrm{R}_{\sigma_{2}}\left(\Phi_{\tilde{\epsilon}, \epsilon}^{\mathbf{B}}\right)(y)\right)\right\|_{2} \leq \epsilon.
		$$
		We set $B_{L}(\tilde{\epsilon}, \epsilon):=L\left(\Phi_{\tilde{\epsilon}, \epsilon}^{\mathbf{B}}\right)$
		and 
		$B_{M}(\tilde{\epsilon}, \epsilon):=M\left(\Phi_{\tilde{\epsilon}, \epsilon}^{\mathbf{B}}\right) $.
	\end{assu}
	\begin{assu}\label{4.2}
		For every $\tilde{\epsilon} \geq \hat{\epsilon}, \epsilon>0$, and a corresponding reduced basis $\left(\psi_{i}\right)_{i=1}^{d(\tilde{\epsilon})}$, there exists a neural network $\Phi_{\tilde{\epsilon}, \epsilon}^{\mathbf{f}}$ with $p$ dimensional input and $d(\tilde{\epsilon})$ dimensional output such that
		$$
		\sup _{y \in \mathcal{Y}}\left|\mathbf{f}_{y, \tilde{\epsilon}}^{\mathrm{rb}}-\mathrm{R}_{\sigma_{2}}\left(\Phi_{\tilde{\epsilon}, \epsilon}^{\mathbf{f}}\right)(y)\right| \leq \epsilon.
		$$
		We set $F_{L}(\tilde{\epsilon}, \epsilon):=L\left(\Phi_{\tilde{\epsilon}, \epsilon}^{\mathbf{f}}\right)$ and $F_{M}(\tilde{\epsilon}, \epsilon):=M\left(\Phi_{\tilde{\epsilon}, \epsilon}^{\mathbf{f}}\right).$
	\end{assu}
	
	Next, we present the construction of the neural network emulating $y \mapsto\left(\mathbf{B}_{y, \tilde{\epsilon}}^{\mathrm{rb}}\right)^{-1}$.
	\begin{prop}\label{4.7}
		Let $\tilde{\epsilon} \geq \hat{\epsilon}, \epsilon >0,$ and $\epsilon^{\prime}:=\min\{\frac{3 \epsilon \lambda \beta^{2}}{8}, \frac{\lambda\beta}{4}\}.$ Under  Assumption
		\ref{4.1}, there exists a ReQU neural network $\Phi_{\mathrm{inv} ;\tilde{\epsilon},\epsilon}^{\textbf{B}}$ with $p$ dimensional input and $d(\tilde{\epsilon})^{2}$ dimensional output that satisfies the following properties:
		\begin{enumerate}[(i)]
			\item $\sup _{y \in \mathcal{Y}}\left\|\left(\mathbf{B}_{y, \tilde{\epsilon}}^{\mathrm{rb}}\right)^{-1}-\operatorname{matr}\left(\mathrm{R}_{\sigma_{2}}\left(\Phi_{\mathrm{inv} ; \tilde{\epsilon}, \epsilon}^{\textbf{B}}\right)(y)\right)\right\|_{2} \leq \epsilon,$
			\item there exists a constant $C^{B}_{L}=C^{B}_{L}(\alpha,\beta)>0$  such that 
			$$L\left(\Phi_{\mathrm{inv} ; \tilde{\epsilon}, \epsilon}^		\textbf{B}\right) \leq C_{B} \log _{2}(\log_{2}(1/\epsilon))+B_{L}(\tilde{\epsilon},\epsilon^{\prime}),$$
			\item there exists a constant $C^{B}_{M}=C^{B}_{M}(\alpha,\beta)>0$  such that 
			$$M\left(\Phi_{\mathrm{inv}; \tilde{\epsilon}, \epsilon}^{\textbf{B}}\right) \leq C_B d^3(\tilde{\epsilon})\log^{2}_{2}(\log_{2}(1/\epsilon))+5B_M(\tilde{\epsilon},\epsilon^{\prime}).$$ 
		\end{enumerate}
	\end{prop}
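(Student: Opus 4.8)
The plan is to assemble $\Phi_{\mathrm{inv};\tilde\epsilon,\epsilon}^{\mathbf{B}}$ out of the network $\Phi_{\tilde\epsilon,\epsilon'}^{\mathbf{B}}$ of Assumption~\ref{4.1}, which emulates $y\mapsto\lambda\mathbf{B}_{y,\tilde\epsilon}^{\mathrm{rb}}$, and the matrix-inversion network of Theorem~\ref{3.4}, exploiting the identity $(\mathbf{B}_{y,\tilde\epsilon}^{\mathrm{rb}})^{-1}=\lambda\bigl(\mathbf{Id}_{\mathbb{R}^{d(\tilde\epsilon)}}-(\mathbf{Id}_{\mathbb{R}^{d(\tilde\epsilon)}}-\lambda\mathbf{B}_{y,\tilde\epsilon}^{\mathrm{rb}})\bigr)^{-1}$. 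Concretely, I would absorb the affine map $\mathbf{v}\mapsto\operatorname{vec}(\mathbf{Id}_{\mathbb{R}^{d(\tilde\epsilon)}})-\mathbf{v}$ into the last, activation-free layer of $\Phi_{\tilde\epsilon,\epsilon'}^{\mathbf{B}}$ to get $\widetilde\Phi^{\mathbf{B}}$ (this leaves $L$ unchanged and raises $M$ by at most $d(\tilde\epsilon)$), absorb the final scaling $\mathbf{v}\mapsto\lambda\mathbf{v}$ into the last layer of $\Phi_{\mathrm{inv};\epsilon''}^{d(\tilde\epsilon)}$ to get $\widetilde\Phi^{\mathrm{inv}}$ (at no cost in $L$ or $M$), and set $\Phi_{\mathrm{inv};\tilde\epsilon,\epsilon}^{\mathbf{B}}:=\widetilde\Phi^{\mathrm{inv}}\odot\widetilde\Phi^{\mathbf{B}}$, with $\epsilon'$ as in the statement and $\epsilon'':=\min\{\tfrac{\epsilon}{2\lambda},\tfrac12\}\in(0,1)$, which enjoys $\lambda\epsilon''\le\tfrac{\epsilon}{2}$.

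Next comes the error estimate. Write $\widehat{\mathbf{B}}_y:=\operatorname{matr}(\mathrm{R}_{\sigma_2}(\Phi_{\tilde\epsilon,\epsilon'}^{\mathbf{B}})(y))$, so Assumption~\ref{4.1} gives $\|\lambda\mathbf{B}_{y,\tilde\epsilon}^{\mathrm{rb}}-\widehat{\mathbf{B}}_y\|_2\le\epsilon'$; combined with $\|\mathbf{Id}_{\mathbb{R}^{d(\tilde\epsilon)}}-\lambda\mathbf{B}_{y,\tilde\epsilon}^{\mathrm{rb}}\|_2\le1-\delta$ and $\epsilon'\le\tfrac{\lambda\beta}{4}=\tfrac{\delta}{4}$ this yields $\|\mathbf{Id}_{\mathbb{R}^{d(\tilde\epsilon)}}-\widehat{\mathbf{B}}_y\|_2\le1-\tfrac{3\delta}{4}=:1-\delta'$ (and $\delta'\in(0,1)$), so Theorem~\ref{3.4} may be invoked with parameters $(\delta',\epsilon'')$ and dimension $d(\tilde\epsilon)$. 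With $\widehat{\mathbf{C}}_y:=\operatorname{matr}(\mathrm{R}_{\sigma_2}(\Phi_{\mathrm{inv};\epsilon''}^{d(\tilde\epsilon)})(\operatorname{vec}(\mathbf{Id}_{\mathbb{R}^{d(\tilde\epsilon)}}-\widehat{\mathbf{B}}_y)))$ the network output is $\lambda\widehat{\mathbf{C}}_y$, and I would split
\begin{equation*}
\bigl\|(\mathbf{B}_{y,\tilde\epsilon}^{\mathrm{rb}})^{-1}-\lambda\widehat{\mathbf{C}}_y\bigr\|_2\le\lambda\bigl\|(\lambda\mathbf{B}_{y,\tilde\epsilon}^{\mathrm{rb}})^{-1}-\widehat{\mathbf{B}}_y^{-1}\bigr\|_2+\lambda\bigl\|\widehat{\mathbf{B}}_y^{-1}-\widehat{\mathbf{C}}_y\bigr\|_2 .
\end{equation*}
The second summand is $\le\lambda\epsilon''\le\tfrac{\epsilon}{2}$ by Theorem~\ref{3.4}(i), since $\mathbf{Id}_{\mathbb{R}^{d(\tilde\epsilon)}}-(\mathbf{Id}_{\mathbb{R}^{d(\tilde\epsilon)}}-\widehat{\mathbf{B}}_y)=\widehat{\mathbf{B}}_y$. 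For the first, the resolvent identity $(\lambda\mathbf{B}_{y,\tilde\epsilon}^{\mathrm{rb}})^{-1}-\widehat{\mathbf{B}}_y^{-1}=(\lambda\mathbf{B}_{y,\tilde\epsilon}^{\mathrm{rb}})^{-1}(\widehat{\mathbf{B}}_y-\lambda\mathbf{B}_{y,\tilde\epsilon}^{\mathrm{rb}})\widehat{\mathbf{B}}_y^{-1}$, together with $\|(\lambda\mathbf{B}_{y,\tilde\epsilon}^{\mathrm{rb}})^{-1}\|_2\le(\lambda\beta)^{-1}$ from \eqref{B}, the Neumann bound $\|\widehat{\mathbf{B}}_y^{-1}\|_2\le(\delta')^{-1}=\tfrac{4}{3\delta}$, and $\|\widehat{\mathbf{B}}_y-\lambda\mathbf{B}_{y,\tilde\epsilon}^{\mathrm{rb}}\|_2\le\epsilon'$, produces the bound $\lambda\cdot\tfrac{1}{\lambda\beta}\cdot\epsilon'\cdot\tfrac{4}{3\delta}=\tfrac{4\epsilon'}{3\lambda\beta^2}\le\tfrac{\epsilon}{2}$ by the choice of $\epsilon'$. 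Adding the two halves and taking $\sup_{y\in\mathcal{Y}}$ gives~(i).

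For the complexity, Lemma~\ref{11}(ii) gives $L(\Phi_{\mathrm{inv};\tilde\epsilon,\epsilon}^{\mathbf{B}})=L(\widetilde\Phi^{\mathrm{inv}})+L(\widetilde\Phi^{\mathbf{B}})=(2l''+1)+B_L(\tilde\epsilon,\epsilon')$ with $l'':=l(\epsilon'',\delta')$, while Lemma~\ref{11}(iii) gives $M(\Phi_{\mathrm{inv};\tilde\epsilon,\epsilon}^{\mathbf{B}})\le5M(\widetilde\Phi^{\mathrm{inv}})+5M(\widetilde\Phi^{\mathbf{B}})+4d(\tilde\epsilon)^2\le5C_{\mathrm{inv}}d(\tilde\epsilon)^3(l'')^2+5B_M(\tilde\epsilon,\epsilon')+\mathcal{O}(d(\tilde\epsilon)^2)$, using Theorem~\ref{3.4}(ii)-(iii). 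The remaining point is bounding $l''$: since $\delta'=\tfrac{3\lambda\beta}{4}$ depends only on $\alpha,\beta$, the base $1-\delta'$ of the logarithm in $l''=\lceil\log_2(\log_{1-\delta'}(\delta'\epsilon'')+1)\rceil$ is a fixed constant, and $\epsilon''$ agrees with $\epsilon$ up to an $(\alpha,\beta)$-dependent factor, so $\log_{1-\delta'}(\delta'\epsilon'')+1$ is comparable to $\log_2(1/\epsilon)$ and hence $l''\le C_B\log_2(\log_2(1/\epsilon))$ in the regime of small $\epsilon$; since $l''\ge1$ the lower-order $d(\tilde\epsilon)^2$ and $d(\tilde\epsilon)$ terms fold into $C_Bd(\tilde\epsilon)^3\log_2^2(\log_2(1/\epsilon))$, establishing (ii) and (iii). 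I expect the main obstacle to be the two-sided bookkeeping: verifying that the perturbed matrix $\widehat{\mathbf{B}}_y$ remains in the regime $\|\mathbf{Id}_{\mathbb{R}^{d(\tilde\epsilon)}}-\widehat{\mathbf{B}}_y\|_2\le1-\delta'$ where Theorem~\ref{3.4} is valid (the very reason the constraint $\epsilon'\le\tfrac{\lambda\beta}{4}$ is imposed), together with propagating the $(\alpha,\beta)$-dependent constants through the sparse concatenation so that they collapse into the single constants $C_B=C_B(\alpha,\beta)$ asserted in (ii)-(iii).
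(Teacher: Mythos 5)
Your proposal is correct and follows essentially the same route as the paper: negate and shift the last layer of $\Phi_{\tilde\epsilon,\epsilon'}^{\mathbf{B}}$ to emulate $y\mapsto\mathbf{Id}_{\mathbb{R}^{d(\tilde\epsilon)}}-\lambda\mathbf{B}_{y,\tilde\epsilon}^{\mathrm{rb}}$, sparsely concatenate with the Neumann-series inversion network of Theorem~\ref{3.4}, rescale by $\lambda$, and split the error into a perturbation-of-the-inverse term (controlled via the resolvent identity and the choice of $\epsilon'$) plus a truncation term (controlled by Theorem~\ref{3.4} after checking the perturbed matrix stays in the contraction regime). Your bookkeeping is in fact slightly more careful than the paper's at one point: you bound $\|\widehat{\mathbf{B}}_y^{-1}\|_2$ by the Neumann estimate $(1-\|\mathbf{Id}-\widehat{\mathbf{B}}_y\|_2)^{-1}\le(\delta')^{-1}$, whereas the paper infers the same bound from a lower bound on $\|\mathbf{Id}-\operatorname{matr}(\mathrm{R}_{\sigma_2}(\Phi_{\tilde\epsilon,\epsilon'}^{\mathbf{B},\mathbf{Id}})(y))\|_2$, which as stated does not by itself control the inverse.
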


	\begin{proof}
		Let $\left(\left(\mathbf{A}_{\tilde{\epsilon}, \epsilon'}^{1}, \mathbf{b}_{\tilde{\epsilon}, \epsilon'}^{1}\right), \ldots,\left(\mathbf{A}_{\tilde{\epsilon}, \epsilon'}^{L}, \mathbf{b}_{\tilde{\epsilon}, \epsilon'}^{L}\right)\right):=\Phi_{\tilde{\epsilon}, \epsilon'}^{\mathbf{B}}$ be the neural network in Assumption \ref{4.1}, then, for
		$$
		\Phi_{\tilde{\epsilon}, \epsilon'}^{\mathbf{B}, \mathbf{I d}}:=\left(\left(\mathbf{A}_{\tilde{\epsilon}, \epsilon'}^{1}, \mathbf{b}_{\tilde{\epsilon}, \epsilon'}^{1}\right), \ldots,\left(-\mathbf{A}_{\tilde{\epsilon}, \epsilon'}^{L}, -\mathbf{b}_{\tilde{\epsilon}, \epsilon'}^{L}+\operatorname{vec}\left(\mathbf{I} \mathbf{d}_{\mathbb{R}^{d(\tilde{\epsilon})}}\right)\right)\right),
		$$
		we have 
		\begin{align}\label{eq-esti-Phi-BId}
			\sup _{y \in \mathcal{Y}}\left\|\mathbf{I d}_{\mathbb{R}^{d(\tilde{\epsilon})}}-\lambda \mathbf{B}_{y, \tilde{\epsilon}}^{\mathrm{rb}}-\operatorname{matr}\left(\mathrm{R}_{\sigma_{2}}\left(\Phi_{\tilde{\epsilon}, \epsilon'}^{\mathbf{B}, \mathbf{I d}}\right)(y)\right)\right\|_{2} \leq \epsilon',
		\end{align}
		as well as $M\left(\Phi_{\tilde{\epsilon}, \epsilon'}^{\mathbf{B}, \mathbf{I d}}\right) \leq B_{M}(\tilde{\epsilon}, \epsilon')+d^{2}(\tilde{\epsilon})$ and $L\left(\Phi_{\tilde{\epsilon}, \epsilon'}^{\mathbf{B}, \mathbf{I d}}\right)=B_{L}(\tilde{\epsilon}, \epsilon')$.
		Next, we define the ReQU neural network
		$$
		\Phi_{\mathrm{inv} ; \tilde{\epsilon}, \epsilon}^{\textbf{B}}:=\left(\left(\lambda \mathbf{I} \mathbf{d}_{\mathbb{R}^{d(\tilde{\epsilon})}}, \mathbf{0}_{\mathbb{R}^{d(\tilde{\epsilon})}}\right)\right) \bullet \Phi_{\mathrm{inv} ; \frac{\epsilon}{2 \lambda}}^{ d(\tilde{\epsilon})} \odot \Phi_{\tilde{\epsilon}, \epsilon^{\prime}}^{\mathbf{B}, \mathbf{I d}},
		$$
		with $p$ dimensional input and $d(\tilde{\epsilon})^{2}$ dimensional output. To prove $(i)$, it suffices to estimate
		\begin{equation}\label{4.5}
			\begin{aligned}
				&\left\| \left( \lambda\mathbf{B}_{y, \tilde{\epsilon}}^{\mathrm{rb}}\right)^{-1} - \operatorname{matr} \left( \mathrm{R}_{\sigma_{2}} \left( \Phi_{\mathrm{inv} ; \frac{\epsilon}{2 \lambda}}^{d(\tilde{\epsilon})} \odot \Phi_{\tilde{\epsilon}, \epsilon^{\prime}}^{\textbf{B}, \textbf{Id}} \right) (y) \right)\right\|_{2} \\
				\leq &  \left\|\left(\lambda\mathbf{B}_{y, \tilde{\epsilon}}^{\mathrm{rb}}\right)^{-1}-\left( \mathbf{I d}_{\mathbb{R}^{d(\tilde{\epsilon})}} - \operatorname{matr} \left( \mathrm{R}_{\sigma_{2}} \left(\Phi_{\tilde{\epsilon}, \epsilon^{\prime}}^{\textbf{B}, \textbf{Id}}\right)(y)\right)\right)^{-1}\right\|_{2} \\
				&+\left\| \left( \mathbf{I d}_{\mathbb{R}^{d(\tilde{\epsilon})}} - \operatorname{matr} \left( \mathrm{R}_{\sigma_{2}} \left(\Phi_{\tilde{\epsilon}, \epsilon^{\prime}}^{\textbf{B}, \textbf{Id}}\right)(y)\right)\right)^{-1} - \operatorname{matr}\left(\mathrm{R}_{\sigma_{2}}\left(\Phi_{\mathrm{inv} ; \frac{\epsilon}{2 \lambda}}^{d(\tilde{\epsilon})} \odot \Phi_{\tilde{\epsilon}, \epsilon^{\prime}}^{\textbf{B}, \textbf{Id}}\right)(y)\right)\right\|_{2}\\
				=:& \mathrm{I}+\mathrm{II}.
			\end{aligned}
		\end{equation}
		
		For term $\mathrm{I}$, by \eqref{eq-esti-Phi-BId}, we have the following expression:
		\begin{align*}
			\left\|\mathbf{I d}_{\mathbb{R}^{d(\tilde{\epsilon})}} - \operatorname{matr} \left( \mathrm{R}_{\sigma_{2}} \left(\Phi_{\tilde{\epsilon}, \epsilon^{\prime}}^{\textbf{B}, \textbf{Id}}\right)(y)\right)\right\|_2 \ge \|\lambda \mathbf{B}_{y, \tilde{\epsilon}}^{\mathrm{rb}}\|_2 -\epsilon' \ge \lambda \beta - \frac{\lambda \beta}{4} =  \frac{3}{4}\lambda \beta.
		\end{align*}
		By combining Assumption \ref{4.1}, \eqref{B}, and \eqref{4.5}, we obtain
		\begin{equation*}
			\begin{aligned}
				\mathrm{I} & \leq\left\|\left(\mathbf{I d}_{\mathbb{R}^{d(\tilde{\epsilon})}} - \operatorname{matr} \left( \mathrm{R}_{\sigma_{2}} \left(\Phi_{\tilde{\epsilon}, \epsilon^{\prime}}^{\textbf{B}, \textbf{Id}}\right)(y)\right)\right)^{-1}\right\|_{2} \left\|\left(\lambda \mathbf{B}_{y, \tilde{\epsilon}}^{\mathrm{rb}}\right)^{-1}\right\|_{2} \left\|\mathbf{I d}_{\mathbb{R}^{d(\tilde{\epsilon})}}-\lambda \mathbf{B}_{y, \tilde{\epsilon}}^{\mathrm{rb}}-\operatorname{matr}\left(\mathrm{R}_{\sigma_{2}}\left(\Phi_{\tilde{\epsilon}, \epsilon'}^{\mathbf{B}, \mathbf{I d}}\right)(y)\right)\right\|_{2}\\
				& \leq \frac{4}{3} \frac{1}{\lambda\beta } \frac{1}{\lambda \beta} \epsilon'  \le \frac{\epsilon}{2 \lambda}.
			\end{aligned}
		\end{equation*}
		
		For term II, by the triangle inequality, for every $y \in \mathcal{Y}$, we have the following expression:
		\begin{equation}\label{I}
			\begin{aligned}
				\left\|\operatorname{matr}\left(\mathrm{R}_{\sigma_{2}}\left(\Phi_{\tilde{\epsilon}, \epsilon^{\prime}}^{\textbf{B}, \textbf{Id}}\right)(y)\right)\right\|_{2} & \leq\left\|\operatorname{matr}\left(\mathrm{R}_{\sigma_{2}}\left(\Phi_{\tilde{\epsilon}, \epsilon^{\prime}}^{\textbf{B}, \textbf{Id}}\right)(y)\right)-\left(\textbf{Id}_{\mathbb{R}^{d(\bar{\epsilon})}}-\lambda \mathbf{B}_{y, \tilde{\epsilon}}^{\mathrm{rb}}\right)\right\|_{2}+\left\|\mathbf{I d}_{\mathbb{R}^{d(\bar{\epsilon})}}-\lambda \mathbf{B}_{y, \tilde{\epsilon}}^{\mathrm{rb}}\right\|_{2} \\
				& \leq \epsilon^{\prime}+1-\delta \leq 1-\delta+\frac{\lambda \beta}{4} \leq  1-\delta+\frac{\delta}{2}=1-\frac{\delta}{2} .
			\end{aligned}
		\end{equation}
		Thus, by Theorem \ref{3.4}, we derive that II $\leq \epsilon / 2 \lambda$. Combining I and II implies the following:
		\begin{equation*}
			\sup _{y \in \mathcal{Y}}\left\|\left(\mathbf{B}_{y, \tilde{\epsilon}}^{\mathrm{rb}}\right)^{-1}-\operatorname{matr}\left(\mathrm{R}_{\sigma_{2}}\left(\Phi_{\mathrm{inv} ; \tilde{\epsilon}, \epsilon}^{\textbf{B}}\right)(y)\right)\right\|_{2} \leq \epsilon.
		\end{equation*}
		For the size of neural network $\Phi_{\mathrm{inv} ; \tilde{\epsilon}, \epsilon}^{\textbf{B}}$, by Lemma \ref{11}, Theorem \ref{3.4}, and \eqref{I}, we have the following expression:
		\begin{equation*}
			L\left(\Phi_{\mathrm{inv} ; \tilde{\epsilon}, \epsilon}^{\textbf{B}}\right)=L\left(\Phi_{\mathrm{inv} ; \frac{\epsilon}{2 \lambda}}^{d(\tilde{\epsilon})} \odot \Phi_{\tilde{\epsilon}, \epsilon^{\prime}}^{\textbf{B}, \mathbf{I d}}\right)= 2l(\epsilon /(2 \lambda), \delta / 2) + 1 + B_{L}\left(\tilde{\epsilon}, \epsilon^{\prime}\right).
		\end{equation*} 
		and
		\begin{align*}
			M\left(\Phi_{\mathrm{inv} ; \tilde{\epsilon}, \epsilon}^{\textbf{B}}\right)&=M\left(\Phi_{\mathrm{inv} ; \frac{\epsilon}{2 \lambda}}^{d(\tilde{\epsilon})} \odot \Phi_{\tilde{\epsilon}, \epsilon^{\prime}}^{\textbf{B}, \textbf{Id}}\right)\\
			&\leq 5\Phi_{\mathrm{inv} ; \frac{\epsilon}{2 \lambda}}^{d(\tilde{\epsilon})} +5M(\Phi_{\tilde{\epsilon}, \epsilon^{\prime}}^{\textbf{B}, \textbf{Id}}) + 4d^2(\tilde{\epsilon})  \\
			&\leq5C_{\mathrm{inv}} d^3(\tilde{\epsilon}) l^2(\epsilon/2\lambda,\delta/2)+5B_{M}\left(\tilde{\epsilon}, \epsilon^{\prime}\right) + 9d^2(\tilde{\epsilon}).
		\end{align*}
		Therefore, by the definition of $l(\epsilon, \delta)$, $(ii)$ and $(iii)$ hold for suitably chosen constants $C^{B}_{L}=C^{B}_{L}(\alpha,\beta)$ and $C^{B}_{M}=C^{B}_{M}(\alpha,\beta)>0$. This completes the proof. 
	\end{proof}
	
	We can now construct ReQU neural networks that approximate the coefficient maps $\tilde{\mathbf{u}}^{\mathrm{h}}_{\cdot, \tilde{\epsilon}}, \mathbf{u}_{\cdot, \tilde{\epsilon}}^{\mathrm{rb}}.$ We present our main result.
	\begin{thm}\label{4.3}
		Let $\tilde{\epsilon} \geq \hat{\epsilon}$, $\epsilon >0,$ and define $\epsilon^{\prime}:= \frac{\epsilon}{\epsilon \beta + 2C_{f }},$\  $\epsilon^{\prime \prime}:= \frac{\epsilon\beta}{2} $, $\epsilon^{\prime \prime \prime}:= \min\{\frac{3 \epsilon' \lambda \beta^{2}}{8}, \frac{\lambda\beta}{4}\}$. Under Assumption \ref{4.1} and \ref{4.2},  there exist neural networks $ \Phi_{\tilde{\epsilon}, \epsilon}^{\mathbf{u}, \mathrm{rb}}$ and $\Phi_{\tilde{\epsilon}, \epsilon}^{\mathbf{u}, \mathrm{h}}$ satisfying the following properties:
		\begin{enumerate}[(i)]
			\item $\sup _{y \in \mathcal{Y}}\left|\mathbf{u}_{y, \tilde{\epsilon}}^{\mathrm{rb}}-\mathrm{R}_{\sigma_{2}}\left(\Phi_{\tilde{\epsilon}, \epsilon}^{\mathbf{u}, \mathrm{rb}}\right)(y)\right| \leq \epsilon$ and $\sup _{y \in \mathcal{Y}}\left|\tilde{\mathbf{u}}^{\mathrm{h}}_{y, \tilde{\epsilon}}-\mathrm{R}_{\sigma_{2}}\left(\Phi_{\tilde{\epsilon}, \epsilon}^{\mathbf{u}, \mathrm{h}}\right)(y)\right|_{\mathrm{G}} \leq \epsilon,$
			\item there exists a constant $C_{L}^{\mathrm{u}}=C_{L}^{\mathrm{u}}\left(\alpha,\beta,  C_{f }\right)>0$ such that
			$$
			\begin{aligned}
				L\left(\Phi_{\tilde{\epsilon}, \epsilon}^{\mathbf{u}, \mathrm{rb}}\right) \leq L\left(\Phi_{\tilde{\epsilon}, \epsilon}^{\mathbf{u}, \mathrm{h}}\right) 
				\leq C^{\mathrm{u}}_{L}\max\{ \log _{2}(\log_{2}(1/\epsilon))+B_{L}(\tilde{\epsilon},\epsilon^{\prime\prime\prime}),F_L(\tilde{\epsilon},\epsilon^{\prime\prime})\},
			\end{aligned}
			$$
			\item there exists a constant $C_{M}^{\mathrm{u}}=C_{M}^{\mathrm{u}}\left( \alpha,\beta, C_{\text {f }}\right)>0$ such that
			\begin{align*}
				M\left(\Phi_{\tilde{\epsilon}, \epsilon}^{\mathbf{u}, \mathrm{rb}}\right)  \leq& 5C^{\mathrm{u}}_Md^2(\tilde{\epsilon})(d(\tilde{\epsilon})\log^2_{2}(\log_{2}(1/\epsilon))+\log_{2}(\log_{2}(1/\epsilon))+B_{L}(\tilde{\epsilon},\epsilon^{\prime\prime\prime})+F_L(\tilde{\epsilon},\epsilon^{\prime\prime}))\\
				&+25B_M(\tilde{\epsilon},\epsilon^{\prime\prime\prime})+25F_{M}\left(\tilde{\epsilon}, \epsilon^{\prime \prime}\right) 
			\end{align*}
			\item $M\left(\Phi_{\tilde{\epsilon}, \epsilon}^{\mathbf{u}, \mathrm{h}}\right) \leq 5 D d(\tilde{\epsilon})+5 M\left(\Phi_{\tilde{\epsilon}, \epsilon}^{\mathrm{u}, \mathrm{rb}}\right).$
			
		\end{enumerate}
		
	\end{thm}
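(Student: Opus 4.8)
Write $d:=d(\tilde\epsilon)$. The plan is to assemble $\Phi_{\tilde\epsilon,\epsilon}^{\mathbf u,\mathrm{rb}}$ from the three building blocks that realize, respectively, an approximate matrix inverse, an approximate right-hand side, and their product:
\begin{equation*}
	\Phi_{\tilde\epsilon,\epsilon}^{\mathbf u,\mathrm{rb}}
	:= \Phi_{\mathrm{mult}}^{d,d,1}\;\odot\;\mathrm{P}\!\left(\Phi_{\mathrm{inv};\tilde\epsilon,\epsilon'}^{\mathbf B},\ \Phi_{\tilde\epsilon,\epsilon''}^{\mathbf f}\right)\;\bullet\;\left(\left(\begin{pmatrix}\mathbf{Id}_{\mathbb R^{p}}\\ \mathbf{Id}_{\mathbb R^{p}}\end{pmatrix},\ \mathbf 0_{\mathbb R^{2p}}\right)\right),
\end{equation*}
where $\Phi_{\mathrm{inv};\tilde\epsilon,\epsilon'}^{\mathbf B}$ is the network of Proposition~\ref{4.7} run with target accuracy $\epsilon'$ (so it invokes Assumption~\ref{4.1} with accuracy $\epsilon'''$ internally), $\Phi_{\tilde\epsilon,\epsilon''}^{\mathbf f}$ is the network of Assumption~\ref{4.2}, and $\Phi_{\mathrm{mult}}^{d,d,1}$ is the matrix--vector multiplication network of Proposition~\ref{p} (the case $n=d$, $l=1$; its input dimension $d(d+1)=d^{2}+d$ matches the output dimension $d^{2}+d$ of the parallelization). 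The trailing affine block copies $y\in\mathbb R^{p}$ into $(y,y)$ so both branches receive the same parameter; since each of its rows has a single nonzero entry, Lemma~\ref{1} shows it does not increase the layerwise weight counts. For the high-fidelity coefficient map I would prepend $\mathbf V_{\tilde\epsilon}$ as a sparse-concatenated affine layer, $\Phi_{\tilde\epsilon,\epsilon}^{\mathbf u,\mathrm h}:=\left(\left(\mathbf V_{\tilde\epsilon},\mathbf 0_{\mathbb R^{D}}\right)\right)\odot\Phi_{\tilde\epsilon,\epsilon}^{\mathbf u,\mathrm{rb}}$.

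For (i): because $\Phi_{\mathrm{mult}}^{d,d,1}$ realizes the matrix--vector product \emph{exactly} (Proposition~\ref{p}(i)), Lemmas~\ref{11}(i) and~\ref{11b}(i) give $\mathrm R_{\sigma_{2}}(\Phi_{\tilde\epsilon,\epsilon}^{\mathbf u,\mathrm{rb}})(y)=\widehat{\mathbf A}_{y}\,\widehat{\mathbf f}_{y}$ with $\|\widehat{\mathbf A}_{y}-(\mathbf B_{y,\tilde\epsilon}^{\mathrm{rb}})^{-1}\|_{2}\le\epsilon'$ and $|\widehat{\mathbf f}_{y}-\mathbf f_{y,\tilde\epsilon}^{\mathrm{rb}}|\le\epsilon''$. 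Writing
\begin{equation*}
	\bigl|\widehat{\mathbf A}_{y}\widehat{\mathbf f}_{y}-(\mathbf B_{y,\tilde\epsilon}^{\mathrm{rb}})^{-1}\mathbf f_{y,\tilde\epsilon}^{\mathrm{rb}}\bigr|
	\le \|\widehat{\mathbf A}_{y}-(\mathbf B_{y,\tilde\epsilon}^{\mathrm{rb}})^{-1}\|_{2}\,|\widehat{\mathbf f}_{y}|
	+\|(\mathbf B_{y,\tilde\epsilon}^{\mathrm{rb}})^{-1}\|_{2}\,|\widehat{\mathbf f}_{y}-\mathbf f_{y,\tilde\epsilon}^{\mathrm{rb}}|,
\end{equation*}
and bounding $|\widehat{\mathbf f}_{y}|\le C_{f}+\epsilon''$, $\|(\mathbf B_{y,\tilde\epsilon}^{\mathrm{rb}})^{-1}\|_{2}\le\beta^{-1}$, $|\mathbf f_{y,\tilde\epsilon}^{\mathrm{rb}}|\le C_{f}$ from \eqref{B}, the right side is at most $\epsilon'(C_{f}+\epsilon'')+\beta^{-1}\epsilon''$; the choices $\epsilon'=\epsilon/(\epsilon\beta+2C_{f})$ and $\epsilon''=\epsilon\beta/2$ are arranged so each summand equals $\epsilon/2$. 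The $\mathrm h$-estimate follows since $\tilde{\mathbf u}_{y,\tilde\epsilon}^{\mathrm h}=\mathbf V_{\tilde\epsilon}\mathbf u_{y,\tilde\epsilon}^{\mathrm{rb}}$, $|\mathbf v|_{\mathbf G}=|\mathbf G^{1/2}\mathbf v|$, and $\|\mathbf G^{1/2}\mathbf V_{\tilde\epsilon}\|_{2}=1$ by \eqref{G1/2}.

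For the sizes: depth is additive under $\odot$ (Lemma~\ref{11}(ii)), the maximum under $\mathrm P$ (Lemma~\ref{11b}(ii)), and unchanged by the affine blocks, so $L(\Phi_{\tilde\epsilon,\epsilon}^{\mathbf u,\mathrm{rb}})=2+\max\{L(\Phi_{\mathrm{inv};\tilde\epsilon,\epsilon'}^{\mathbf B}),F_{L}(\tilde\epsilon,\epsilon'')\}$, and $L(\Phi_{\tilde\epsilon,\epsilon}^{\mathbf u,\mathrm h})$ is the same up to an additive constant. Inserting $L(\Phi_{\mathrm{inv};\tilde\epsilon,\epsilon'}^{\mathbf B})=2\,l(\epsilon'/2\lambda,\delta/2)+1+B_{L}(\tilde\epsilon,\epsilon''')$ from Proposition~\ref{4.7}(ii), and noting $\log_{2}(1/\epsilon')$, $\log_{2}(1/\epsilon''')=\mathcal O(\log_{2}(1/\epsilon))$ with $(\alpha,\beta,C_{f})$-dependent constants (so $l(\cdot)=\mathcal O(\log_{2}\log_{2}(1/\epsilon))$), yields (ii). For the weights I would apply Lemmas~\ref{11}(iii) and~\ref{11b}(iii) with the ingredient bounds $M(\Phi_{\mathrm{mult}}^{d,d,1})\le 12d^{2}$, $M_{1}(\Phi_{\mathrm{mult}}^{d,d,1})\le 8d^{2}$, $M_{L}(\Phi_{\mathrm{mult}}^{d,d,1})\le 4d^{2}$ (Proposition~\ref{p}); $M(\Phi_{\mathrm{inv};\tilde\epsilon,\epsilon'}^{\mathbf B})\le C_{M}^{B}d^{3}\log_{2}^{2}(\log_{2}(1/\epsilon))+5B_{M}(\tilde\epsilon,\epsilon''')$ and $M_{L}(\Phi_{\mathrm{inv};\tilde\epsilon,\epsilon'}^{\mathbf B})\le 4d^{3}$ (its last layer being that of $\Phi_{\mathrm{mult}}^{d,d,d}$, cf.\ the proof of Theorem~\ref{3.4}); $M(\Phi_{\tilde\epsilon,\epsilon''}^{\mathbf f})=F_{M}(\tilde\epsilon,\epsilon'')$; and the output dimensions $d^{2}$, $d$. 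Collecting leading terms gives (iii), and (iv) follows from Lemma~\ref{11}(iii) applied to $\left(\left(\mathbf V_{\tilde\epsilon},\mathbf 0\right)\right)\odot\Phi_{\tilde\epsilon,\epsilon}^{\mathbf u,\mathrm{rb}}$ with $\|\mathbf V_{\tilde\epsilon}\|_{0}\le Dd$.

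I expect the main obstacle to be the weight bookkeeping rather than the error analysis, which is routine once the accuracy parameters are fixed. Since the two branches $\Phi_{\mathrm{inv};\tilde\epsilon,\epsilon'}^{\mathbf B}$ and $\Phi_{\tilde\epsilon,\epsilon''}^{\mathbf f}$ generically have different depths, $\mathrm P$ pads the shorter one with identity ReQU subnetworks, contributing the $\dim_{\mathrm{out}}(\Phi^{i})\bigl(20L(\mathrm P(\cdots))+8\bigr)$ term of Lemma~\ref{11b}(iii); one must check that this, together with the last-layer weight counts $M_{L(\cdot)}$ that propagate into the subsequent $\odot$ with $\Phi_{\mathrm{mult}}^{d,d,1}$, contributes only lower-order $d^{2}\,\mathcal O\bigl(\log_{2}\log_{2}(1/\epsilon)+B_{L}+F_{L}\bigr)$ terms and does not spoil the leading $d^{3}\log_{2}^{2}(\log_{2}(1/\epsilon))$ rate. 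The remaining fiddly point is translating the nested accuracies $\epsilon',\epsilon'',\epsilon'''$ back into clean $\log_{2}\log_{2}(1/\epsilon)$ factors with constants depending only on $\alpha,\beta,C_{f}$.
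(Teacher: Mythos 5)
Your proposal is correct and follows essentially the same route as the paper: the identical composite network $\Phi_{\mathrm{mult}}^{d,d,1}\odot\mathrm{P}(\Phi_{\mathrm{inv};\tilde\epsilon,\epsilon'}^{\mathbf B},\Phi_{\tilde\epsilon,\epsilon''}^{\mathbf f})\bullet((\,(\mathbf{Id}_{\mathbb R^p};\mathbf{Id}_{\mathbb R^p}),\mathbf 0\,))$ with $\mathbf V_{\tilde\epsilon}$ sparsely concatenated on top, the same two-term triangle inequality exploiting that the multiplication block is exact (so the paper's third term vanishes) with the same tuning of $\epsilon',\epsilon''$ to make each summand $\epsilon/2$, and the same size bookkeeping via Lemmas \ref{11} and \ref{11b} together with Propositions \ref{p} and \ref{4.7}. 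No substantive differences.
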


	\begin{proof}
		We define
		\begin{equation*}
			\Phi_{\tilde{\epsilon}, \epsilon}^{\mathbf{u}, \mathrm{rb}}:=\Phi_{\mathrm{mult}}^{d(\tilde{\epsilon}), d(\tilde{\epsilon}), 1} \odot \mathrm{P}\left(\Phi_{\mathrm{inv} ; \tilde{\epsilon}, \epsilon^{\prime}}^{\mathbf{B}}, \Phi_{\tilde{\epsilon}, \epsilon^{\prime \prime}}^{\mathbf{f}}\right) \bullet\left(\left(\left(\begin{array}{c}
				\mathbf{I} \mathbf{d}_{\mathbb{R}^{p}} \\
				\mathbf{I d}_{\mathbb{R}^{p}}
			\end{array}\right), \mathbf{0}_{\mathbb{R}^{2 p}}\right)\right) 
		\end{equation*}
		and
		\begin{equation*}
			\quad \Phi_{\tilde{\epsilon}, \epsilon}^{\mathbf{u},\mathrm{h}}:=\left(\left(\mathbf{V}_{\tilde{\epsilon}}, \mathbf{0}_{\mathbb{R}^{m}}\right)\right) \odot \Phi_{\tilde{\epsilon}, \epsilon}^{\mathbf{u}, \mathrm{rb}}.
		\end{equation*}
		
		According to the definition and triangle inequality, we obtain the following expression:
		\begin{equation*}
			\begin{aligned}
				&\left|\tilde{\mathbf{u}}^{\mathrm{h}}_{y, \tilde{\epsilon}}-\mathrm{R}_{\sigma_{2}}\left(\Phi_{\tilde{\epsilon}, \epsilon}^{\mathbf{u}, \mathrm{h}}\right)(y)\right|_{\mathrm{G}} 
				=\left|\mathbf{G}^{1 / 2} \cdot\left(\mathbf{V}_{\tilde{\epsilon}}\left(\mathbf{B}_{y, \tilde{\epsilon}}^{\mathrm{rb}}\right)^{-1} \mathbf{f}_{y, \tilde{\epsilon}}^{\mathrm{rb}}-\mathrm{R}_{\sigma_{2}}\left(\Phi_{\tilde{\epsilon}, \epsilon}^{\mathbf{u}, \mathrm{h}}\right)(y)\right)\right| \\
				&\leq\left|\mathbf{G}^{1 / 2} \mathbf{V}_{\tilde{\epsilon}} \cdot\left(\left(\mathbf{B}_{y, \tilde{\epsilon}}^{\mathrm{rb}}\right)^{-1} \mathbf{f}_{y, \tilde{\epsilon}}^{\mathrm{rb}}-\left(\mathbf{B}_{y, \tilde{\epsilon}}^{\mathrm{rb}}\right)^{-1} \mathrm{R}_{\sigma_{2}}\left(\Phi_{\tilde{\epsilon}, \epsilon^{\prime \prime}}^{\textbf{f}}\right)(y)\right)\right| \\
				&\quad +\left|\mathbf{G}^{1 / 2} \mathbf{V}_{\tilde{\epsilon}} \cdot\left(\left(\mathbf{B}_{y, \tilde{\epsilon}}^{\mathrm{rb}}\right)^{-1} \mathrm{R}_{\sigma_{2}}\left(\Phi_{\tilde{\epsilon}, \epsilon^{\prime \prime}}^{\textbf{f}}\right)(y)-\operatorname{matr}\left(\mathrm{R}_{\sigma_{2}}\left(\Phi_{\mathrm{inv} ; \tilde{\epsilon}, \epsilon^{\prime}}^{\textbf{B}}\right)(y)\right) \mathrm{R}_{\sigma_{2}}\left(\Phi_{\tilde{\epsilon}, \epsilon^{\prime \prime}}^{\textbf{f}}\right)(y)\right)\right| \\
				&\quad+\left|\mathbf{G}^{1 / 2} \cdot\left(\mathbf{V}_{\tilde{\epsilon}} \operatorname{matr}\left(\mathrm{R}_{\sigma_{2}}\left(\Phi_{\mathrm{inv} ; \tilde{\epsilon}, \epsilon^{\prime}}^{\textbf{B}}\right)(y)\right) \mathrm{R}_{\sigma_{2}}\left(\Phi_{\tilde{\epsilon}, \epsilon^{\prime \prime}}^{\textbf{f}}\right)(y)-\mathrm{R}_{\sigma_{2}}\left(\Phi_{\tilde{\epsilon}, \epsilon}^{\mathbf{u}, \mathrm{h}}\right)(y)\right)\right|\\
				&=: \mathrm{I}+\mathrm{II}+\mathrm{III} .
			\end{aligned}
		\end{equation*}
		
		For term $\mathrm{III}$, by Proposition \ref{p}, 
		\begin{align*}
			\mathrm{R}_{\sigma_{2}}\left(\Phi_{\tilde{\epsilon}, \epsilon}^{\mathbf{u},\mathrm{h}}\right)(y)&=\mathbf{V}_{\tilde{\epsilon}} \mathrm{R}_{\sigma_{2}}\left(\Phi_{\mathrm{mult}}^{d(\tilde{\epsilon}), d(\tilde{\epsilon}), 1} \odot \mathrm{P}\left(\Phi_{\mathrm{inv} ; \tilde{\epsilon}, \epsilon^{\prime}}^{\textbf{B}}, \Phi_{\tilde{\epsilon}, \epsilon^{\prime \prime}}^{\textbf{f}}\right)\right)(y, y)\\
			&=\mathbf{V}_{\tilde{\epsilon}} \operatorname{matr}\left(\mathrm{R}_{\sigma_{2}}\left(\Phi_{\mathrm{inv} ; \tilde{\epsilon}, \epsilon^{\prime}}^{\textbf{B}}\right)(y)\right) \mathrm{R}_{\sigma_{2}}\left(\Phi_{\tilde{\epsilon}, \epsilon^{\prime \prime}}^{\textbf{f}}\right)(y),
		\end{align*}
		which implies $\mathrm{III} = 0$. Therefore, it suffices to estimate $\mathrm{I}$ and $\mathrm{II}$. For term $\mathrm{I}$, by \eqref{B}, \eqref{G1/2}, Assumption \ref{4.2}, and the definition of $\epsilon^{\prime \prime}$, we have the following expression:
		\begin{equation*}
			\mathrm{I} \leq\left\|\mathbf{G}^{1 / 2} \mathbf{V}_{\tilde{\epsilon}}\right\|_{2}\left\|\left(\mathbf{B}_{y, \tilde{\epsilon}}^{\mathrm{rb}}\right)^{-1}\right\|_{2}\left|\mathbf{f}_{y, \tilde{\epsilon}}^{\mathrm{rb}}-\mathrm{R}_{\sigma_{2}}\left(\Phi_{\tilde{\epsilon}, \epsilon^{\prime \prime}}^{\textbf{f}}\right)(y)\right| \leq \frac{1}{\beta} \frac{\epsilon \beta}{2}=\frac{\epsilon}{2}.
		\end{equation*}
		For term II, note that 
		\begin{equation}\label{4.18}
			\sup _{y \in \mathcal{Y}}\left|\mathrm{R}_{\sigma_{2}}\left(\Phi_{\tilde{\epsilon}, \epsilon''}^{\textbf{f}}\right)(y)\right| \leq \epsilon''+C_{\mathrm{f}}.
		\end{equation} 
		By Proposition \ref{4.7} we obtain
		\begin{equation*}
			\begin{aligned}
				\mathrm{II} & \leq \left\|\mathbf{G}^{1 / 2} \mathbf{V}_{\tilde{\epsilon}} \cdot\left(\left(\mathbf{B}_{y, \tilde{\epsilon}}^{\mathrm{rb}}\right)^{-1}-\operatorname{matr}\left(\mathrm{R}_{\sigma_{2}}\left(\Phi_{\mathrm{inv} ; \tilde{\epsilon}, \epsilon^{\prime}}^{\mathrm{B}}\right)(y)\right)\right)\right\|_{2}\left|\mathrm{R}_{\sigma_{2}}\left(\Phi_{\tilde{\epsilon}, \epsilon^{\prime \prime}}^{\textbf{f}}\right)(y)\right| \leq \epsilon^{\prime} \cdot\left(\frac{\epsilon\beta}{2} + C_{\mathrm{f}}\right)= \frac{\epsilon}{2}.
			\end{aligned}
		\end{equation*}
		Combining the estimates on I, II, and III yields (i). Next, we estimate the size of the ReQU neural networks $\Phi_{\tilde{\epsilon}, \epsilon}^{\mathbf{u}, \mathrm{rb}}$. By Lemma \ref{11}, Lemma \ref{11b}, Proposition \ref{p}, and Proposition \ref{4.7}, we have the following expression:
		\begin{equation*}
			\begin{aligned}
				L\left(\Phi_{\tilde{\epsilon}, \epsilon}^{\mathbf{u}, \mathrm{rb}}\right) &<L\left(\Phi_{\tilde{\epsilon}, \epsilon}^{\mathbf{u},\mathrm{h}}\right) \leq 1+L\left(\Phi_{\mathrm{mult}}^{d(\tilde{\epsilon}), d(\tilde{\epsilon}), 1}\right)+L\left(\mathrm{P}\left(\Phi_{\mathrm{inv} ; \tilde{\epsilon}, \epsilon^{\prime}}^{\textbf{B}}, \Phi_{\tilde{\epsilon}, \epsilon^{\prime \prime}}^{\textbf{f}}\right)\right) \\
				& \leq 1+2+\max \left\{L\left(\Phi_{\mathrm{inv} ; \tilde{\epsilon}, \epsilon^{\prime}}^{\textbf{B}}\right), F_{L}\left(\tilde{\epsilon}, \epsilon^{\prime \prime}\right)\right\} \\
				& \leq C^{\mathrm{u}}_{L}\max\{ \log _{2}(\log_{2}(1/\epsilon))+B_{L}(\tilde{\epsilon},\epsilon^{\prime\prime\prime}),F_L(\tilde{\epsilon},\epsilon^{\prime\prime})\},
			\end{aligned}
		\end{equation*}
		where $C_{L}^{\mathrm{u}}$ is a suitable constant $C_{L}^{\mathrm{u}}=C_{L}^{\mathrm{u}}\left( \alpha, \beta, C_f\right)>0.$
		Furthermore, we have the following expression:
		\begin{equation}\label{5.4}
			\begin{aligned}
				M\left(\Phi_{\tilde{\epsilon}, \epsilon}^{\mathbf{u}, \mathrm{rb}}\right) &\leq 5 M\left(\Phi_{\mathrm {mult }}^{d(\tilde{\epsilon}), d(\tilde{\epsilon}), 1}\right)+5 M\left(\mathrm{P}\left(\Phi_{\mathrm{inv} ; \tilde{\epsilon}, \epsilon^{\prime}}^{\textbf{B}}, \Phi_{\tilde{\epsilon}, \epsilon^{\prime \prime}}^{\textbf{f}}\right)\right) + 4d^2(\tilde{\epsilon})\\
				&\leq 64d^2(\tilde{\epsilon})+5 M\left(\mathrm{P}\left(\Phi_{\mathrm{inv} ; \tilde{\epsilon}, \epsilon^{\prime}}^{\textbf{B}}, \Phi_{\tilde{\epsilon}, \epsilon^{\prime \prime}}^{\textbf{f}}\right)\right).
			\end{aligned}
		\end{equation}
		The second term of the above equation can be estimated as follows:
		\begin{equation}\label{5.5}
			\begin{aligned}
				&M\left(\mathrm{P}\left(\Phi_{\mathrm{inv} ; \tilde{\epsilon}, \epsilon^{\prime}}^{\mathrm{B}}, \Phi_{\tilde{\epsilon}, \epsilon^{\prime \prime}}^{\mathrm{f}}\right)\right) \\
				&\leq 5M\left(\Phi_{\mathrm{inv} ; \tilde{\epsilon}, \epsilon^{\prime}}^{\mathrm{B}}\right)+5M\left(\Phi_{\tilde{\epsilon}, \epsilon^{\prime \prime}}^{\mathrm{f}}\right)+2d^2(\tilde{\epsilon})(20\max \left\{L\left(\Phi_{\mathrm{inv}; \tilde{\epsilon}, \epsilon^{\prime}}^{\mathrm{B}}\right), F_{L}\left(\tilde{\epsilon}, \epsilon^{\prime \prime}\right)\right\}+8)\\
				&\leq 5C^{B}_{M}d^3(\tilde{\epsilon})\log^{2}_{2}(\log_{2}(1/\epsilon^{\prime}))+5B_M(\tilde{\epsilon},\epsilon^{\prime\prime\prime})+5F_{M}\left(\tilde{\epsilon}, \epsilon^{\prime \prime}\right)\\
				&+2d^2(\tilde{\epsilon})(20C^{\mathrm{u}}_{L}\max\{ \log _{2}(\log_{2}(1/\epsilon^{\prime}))+B_{L}(\tilde{\epsilon},\epsilon^{\prime\prime\prime}),F_L(\tilde{\epsilon},\epsilon^{\prime\prime})\}+8)\\
				&\leq C^{\mathrm{u}}_Md^2(\tilde{\epsilon})(d(\tilde{\epsilon})\log^2_{2}(\log_{2}(1/\epsilon))+\log_{2}(\log_{2}(1/\epsilon))+B_{L}(\tilde{\epsilon},\epsilon^{\prime\prime\prime})+F_L(\tilde{\epsilon},\epsilon^{\prime\prime}))+5B_M(\tilde{\epsilon},\epsilon^{\prime\prime\prime})+5F_{M}\left(\tilde{\epsilon}, \epsilon^{\prime \prime}\right)
			\end{aligned}
		\end{equation}
		for a suitably chosen constant $C_{M}^{\mathrm{u}}=C_{M}^{\mathrm{u}}\left(C_{B}, C_{L}^{\mathrm{u}}\right)=C_{L}^{\mathrm{u}}\left(\alpha, \beta, C_f\right)>0 .$ Combining \eqref{5.4} and \eqref{5.5} yields (iii), then (iv) follows immediately by Lemma \ref{11} and the definition of $\Phi_{\tilde{\epsilon}, \epsilon}^{\mathbf{u}, \mathrm{h}}$. The proof is completed.
	\end{proof}
			
	\section{Numerical Experiment}
	In this section, we present numerical results to verify our theoretical analysis. The parameter-dependent PDE is considered to be parametric diffusion equations with homogeneous Dirichlet boundary conditions
	\begin{equation*}
		-\nabla \cdot\left(a_{y}(\mathbf{x}) \cdot \nabla u_{y}(\mathbf{x})\right)=f_y(\mathbf{x}), \quad \text { on } \Omega=(0,1)^{2},\left.\quad u_{y}\right|_{\partial \Omega}=0
	\end{equation*}
	where $f_y \in L^{2}(\Omega)$ and $a_y \subset L^{\infty}(\Omega)$.
	
	\subsection{Setup of Neural Networks}
	The experiment is implemented using PyTorch, \cite{paszke2019pytorch}. We use fully connected neural networks with architecture $$S=(\mathrm{dim}_{\mathrm{in}}, 300, \ldots, 300, \mathrm{dim}_{\mathrm{out}})$$ of different numbers of layers $L=3, 5, 7, 9, 11$ respectively, where the weights and biases are initialized according to Xavier initialization. We use 20000 training examples, 5000 validation examples, and 5000 test examples, drawn with respect to the uniform probability measure on $\mathcal{Y}$. The optimization is performed through batch gradient descent, and the batch size is 256. We use the ADAM optimizer with fixed hyper-parameters: $lr = 2.0\times 10^{-4}, \beta_{1} = 0.9, \beta_{2} = 0.999 $, and $\epsilon = 1.0\times 10^{-8}$. The training process is stopped after reaching 40,000 epochs. Furthermore, the loss function is the relative error on the high-fidelity discretization of $H$
	$$
	\mathcal{L}: \mathbb{R}^{m} \times\left(\mathbb{R}^{m} \backslash\{0\}\right) \rightarrow \mathbb{R}, \quad \left(\mathrm{x}_{1}, \mathrm{x}_{2}\right) \mapsto \frac{\left|\mathrm{x}_{1}-\mathrm{x}_{2}\right|_{\mathbf{G}}}{\left|\mathrm{x}_{2}\right|_{\mathbf{G}}}.
	$$
	\begin{rem}
		In practice, we use the discrete version of the mean relative error with respect to the parameter set $\mathcal{Y}$ 
		$$\int_{\mathcal{Y}} \frac{\left|\tilde{\mathbf{u}}^{\mathrm{h}}_{y, \tilde{\epsilon}}-\mathrm{R}_{\sigma_{2}}\left(\Phi_{\tilde{\epsilon}, \epsilon}^{\mathbf{u}, \mathrm{h}}\right)(y)\right|_{\mathbf{G}}}{|\tilde{\mathbf{u}}^{\mathrm{h}}_{y, \tilde{\epsilon}}|_{\mathbf{G}}} dy  $$
		in our numerical experiments, instead of the uniform approximation error in the theoretical analysis. These two errors are comparable, and thus, imply the same rates. 
	\end{rem}
	
	\subsection{Parametric  Sets}\label{section6.1}
	We consider two types of parametric diffusion PDEs as follows.
	\begin{description}
		\item[Parametric Diffusion Coefficients] To compare with the ReLU neural network in \cite{geist2021numerical}, we set 
		$$f(\mathbf{x}) = 20 + 10x_1 -5 x_2,$$
		for $\mathbf{x} = (x_1,x_2)\in \Omega= [0,1]^2$, which is independent of the parameters. Next, we parametrize the diffusion coefficient set $\{a_y: y\in \mathcal{Y}\}\subset \mathbb{R}^{p}$ for $p=s^2 \in \mathbb{N}$ as follows:
		\begin{equation*}
			\mathcal{A}(p, \mu):=\left\{\mu+\sum_{i=1}^{p} y_{i} \mathcal{X}_{\Omega_{i}}: y \in \mathcal{Y}=[0,1]^{p}\right\},
		\end{equation*}
		where $\left(\Omega_{i}\right)_{i=1}^{p}$ forms a $s \times s$ chessboard partition of $(0,1)^{2}$, and $\mu>0$ is a fixed shift. In our numerical tests, we select shifts $\mu=10^{-1}$, $p=s^{2}$, and $s=3$, which yield $p=9$.
		
		The dataset used in this case comes from \cite{geist2021numerical}; it is available at \url{www.github.com/MoGeist/diffusion_PPDE}. The dataset is produced by FEniCS \cite{alnaes2015fenics}, which is based on the finite element method. The finite element space $H^h$ is constructed by the triangulation of $\Omega=[0,1]^{2}$ with $101 \times 101=10201$ equidistant grid points and first-order Lagrange finite elements. This space shall serve as a  discretized version of the space $H^{1}(\Omega) .$ In this case, the dimension of the high-fidelity space $H^h$ is 10201, the input dimension is $\mathrm{dim}_{\mathrm{in}}=9$, and the output dimension is $\mathrm{dim}_{\mathrm{out}}=10201$. An efficient performance can be obtained if the numerical experiments are conducted in the reduced basis space, which is beyond the scope of this work.
		
		\item[Parametric Forcing Term] We consider the Laplace equation with the fixed diffusion coefficient 
		$a(\mathbf{x})= 1$
		for $\mathbf{x} = (x_1,x_2)\in \Omega=[0,\pi]^2$, which is independent of the parameters. We parametrize the forcing term  $\{f_y(\mathbf{x}): y\in \mathcal{Y}\}$ as follows:
		$$\mathcal{F} := \{\sin y_1 \sin x_1 \sin x_2 + \sin y_2 \sin x_1 \sin 2x_2 +\sin y_3\sin 2x_1 \sin x_2 +\sin y_4 \sin 2x_1 \sin 2x_2,\},  $$
		where $y=(y_1,y_2,y_3,y_4)\in \mathcal{Y}= [0,\pi]^4$. The Laplace equation could be solved explicitly for every parameter $y$, drawn with respect to the uniform probability measure on $\mathcal{Y}$, which forms the dataset in our test. In this case, the dimension of high-fidelity space $H^h$ is four. The input dimension $\mathrm{dim}_{\mathrm{in}}$ and output dimension  $\mathrm{dim}_{\mathrm{out}}$ are both four.
	\end{description}

	\subsection{Numerical Results}
	In this subsection, we present our numerical results to verify that ReQU neural networks are numerically stable and more efficient for approximating the discretized parameter to solution map, compared to  ReLU neural networks.
	
	\paragraph{Parametric Diffusion Coefficients}
	We present the following mean relative errors of approximating the discretized parameter to solution map using the ReQU and ReLU neural networks. To ensure comparability of the networks, we use the same structure (fully connected neural network with 11 layers), and train them with the ADAM optimizer with the same parameter as prescribed previously.  Hereafter, if not otherwise stated, we always perform five times random Xavier uniform initializations of the ReLU and ReQU neural networks with fixed partition of datasets and calculate the mean and standard deviation of the test errors of them.
	
\begin{table}[H]
	\centering
	\begin{tabular}{cc}
		\toprule  
		&Mean relative test error \\ 
		\midrule  
		ReQU neural network&0.003255 $\pm$ 0.000148 \\
		\midrule  
		ReLU neural network&0.007126 $\pm$ 0.000181 \\
		\bottomrule  
		\end{tabular}
		\caption{Test errors of the ReQU and ReLU neural networks with 11 layers.}
\end{table}
	With the same network structure, the error of the ReQU neural network is reduced by more than 50$\%$ compared with the ReLU neural network. Figure \ref{fig:1}  illustrates a comparison of the training loss curves of the ReLU and ReQU neural networks, where the ReLU neural network has a larger error and slower convergence speed.
	\begin{figure}[H]
		\centering
		\includegraphics[width=0.6\textwidth]{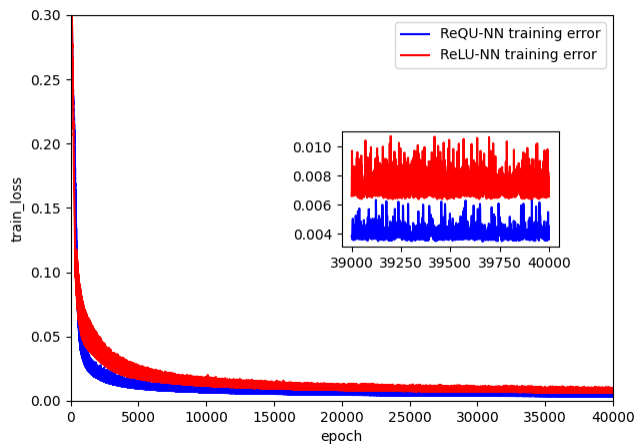}
		\caption{Training loss curve of approximating the discretized parameter to solution map using the ReLU and ReQU neural networks  with 11 layers, both trained by the ADAM optimizer.}
		\label{fig:1}
	\end{figure} 
	In Figure \ref{fig:2}, we depict the ground truth solution and an average performing solution predicted by the ReQU neural network, which displays a visualization highlighting the accuracy of the ReQU neural network.
	\begin{figure}[H]
		\centering 
		\includegraphics[width=0.7\textwidth]{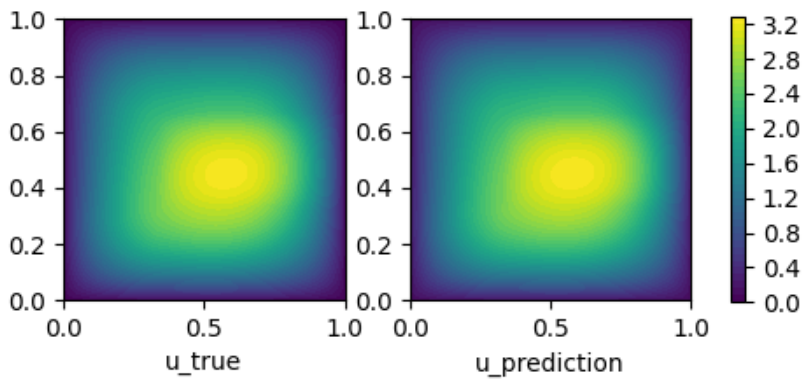}
		\caption{Ground truth solution and the prediction of  ReQU neural network with 11 layers. }
		\label{fig:2}
	\end{figure}
	
	Next, we present some results of approximating the discretized parameter to solution map with fully connected ReQU neural networks with architecture $S=(9, 300, \ldots, 300,10201)$ of different numbers of layers $L=3, 5, 7, 9$.\\
	\begin{table}[H]
		\centering
		\begin{tabular}{ccc}
			\toprule  
			Layers & ReQU neural network & ReLU neural network \\ 
			\midrule  
			$3$&$0.003860 \pm 0.000196$&$ 0.005789 \pm 0.000240$  \\
			\midrule  
			5&0.003455 $\pm$ 0.000054& 0.006308 $\pm$ 0.000229\\
			\midrule  
			7&0.003434 $\pm$ 0.000156& 0.006708 $\pm$ 0.000194\\
			\midrule  
			9&0.003414 $\pm$ 0.000318& 0.007023 $\pm$ 0.000087\\
			\bottomrule  
		\end{tabular}
		\caption{Test errors of the ReQU and ReLU neural networks with different numbers of layers.}
		\label{tab:table2}
	\end{table}
 According to Table 2, ReQU neural networks can achieve smaller errors than ReLU neural networks, at identical network architecture hyperparameters (layers L = 3, 5, 7, 9).
Moreover,  the test errors of ReQU neural networks are more stable as the numbers of layers increase, thus ReQU neural networks are more robust with respect to overfitting. In Figure \ref{Fig:3},  we plot the training loss curves of the ReLU and ReQU neural networks with different numbers of layers, which verify the faster convergence speeds of ReQU neural networks.
    \begin{figure}[H]
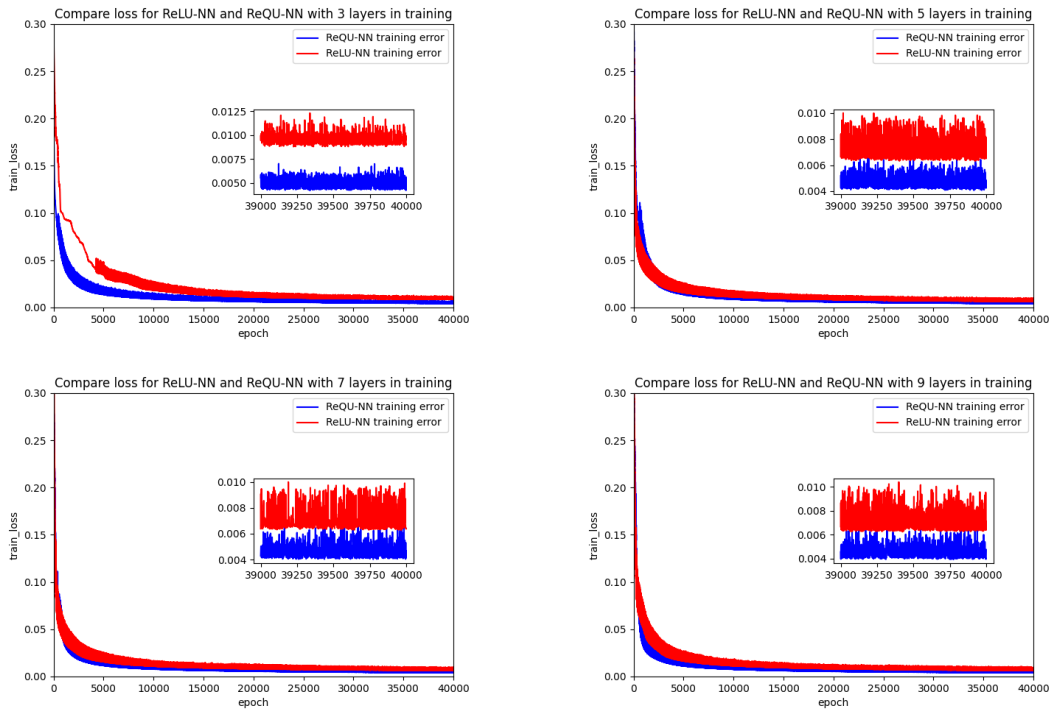

    	\centering
    	\begin{minipage}{0.49\linewidth}
    		\centering
    		\includegraphics[width=0.9\linewidth]{loss_3.png}
    	\end{minipage}
    	\begin{minipage}{0.49\linewidth}
    		\centering
    		\includegraphics[width=0.9\linewidth]{loss_5.png}
    	\end{minipage}
    	\begin{minipage}{0.49\linewidth}
    		\centering
    		\includegraphics[width=0.9\linewidth]{loss_7.png}
    	\end{minipage}
    	\begin{minipage}{0.49\linewidth}
    		\centering
    		\includegraphics[width=0.9\linewidth]{loss_9.png}
    	\end{minipage}
    \caption{Training loss curve of  ReLU and ReQU neural networks  with different numbers of layers.}
    \label{Fig:3}
    \end{figure}

	We then use the trained neural networks, for every test sample, to calculate the time required for computing the solution through a simple forward pass. And we present the average and  standard deviation of the results in Table \ref{time}. 
	\begin{table}[H]
		\centering
		\begin{tabular}{ccc}
			\toprule  
			Layers & ReQU neural network & ReLU neural network \\ 
			\midrule  
			3&0.000575 $\pm$ 0.000027& 0.000532 $ \pm$ 0.000005\\
			\midrule  
			5&0.000578 $\pm$ 0.000025& 0.000610 $\pm$ 0.000009\\
			\midrule  
			7&0.000579 $\pm$ 0.000022& 0.000611 $\pm$ 0.000023\\
			\midrule  
			9&0.000582 $\pm$ 0.000022& 0.000619 $\pm$ 0.000031\\
			\midrule  
			11&0.000596 $\pm$ 0.0000017& 0.000629 $\pm$ 0.000025\\
			\bottomrule  
		\end{tabular}
		\caption{ Inference time (in seconds) of ReQU and ReLU neural networks with different numbers of layers.}
		\label{time}
	\end{table}
    Combine the results  from Table \ref{tab:table2} and Table  \ref{time}, we draw the inference time versus error curve of ReLU and ReQU neural networks.
    
    	\begin{figure}[H]
    	\centering
    	\includegraphics[width=0.6\textwidth]{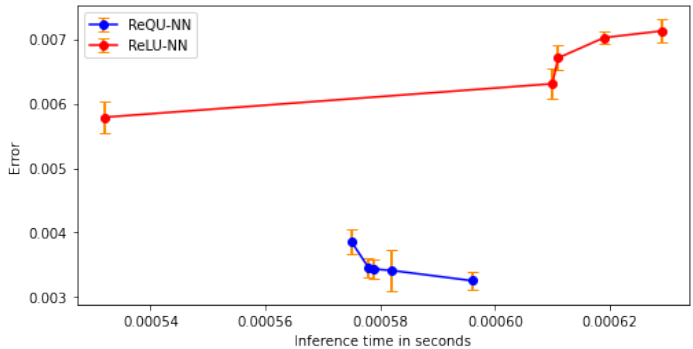}
        \caption{Inference time versus error curve of ReLU and ReQU neural network.}
    \end{figure}

   From above, we can conclude that ReQU neural networks can achieve better accuracy with fewer inference time than ReLU neural networks when the number of layers is 5, 7, 9 and 11. However, the inference time for ReQU neural networks can be larger than that of ReLU neural networks when the number of layers equals 3, while ReQU neural networks still achieve better accuracy. 
	
	\paragraph{Parametric Forcing Term}
	We present the mean relative errors of approximating the discretized parameter to solution map by the fully connected ReQU and ReLU neural networks with the same architecture $S=(4, 300, \ldots, 300,4)$ of $L=3, 5, 7, 9, 11$ layers, respectively.
	
	\begin{table}[H]
		\centering
		\begin{tabular}{ccc}
			\toprule  
			Layers & ReQU neural network& ReLU neural network \\ 
			\midrule  
			3&0.000450 $\pm$ 0.000020&0.000654 $\pm$ 0.000041 \\
			\midrule  
			5&0.000629 $\pm$ 0.000021&0.001276 $\pm$ 0.000068 \\
			\midrule  
			7&0.000682 $\pm$ 0.000123&0.001433 $\pm$ 0.000078 \\
			\midrule  
			9&0.000773 $\pm$ 0.000016& 0.001602 $\pm$ 0.000073\\
			\midrule  
			11&0.000844 $\pm$ 0.000051&0.001747 $\pm$ 0.000142\\
			\bottomrule  
		\end{tabular}
		\caption{Test errors of the ReQU and ReLU neural networks with different numbers of layers.}
		\label{tab:table4}
	\end{table}
	The results from Table \ref{tab:table4} show ReQU neural networks  achieve smaller test errors and are more robust with respect to overfitting.  In Figure \ref{fig:3}, we depict the ground truth solution and an average performing solution predicted by the ReQU neural network with 3 layers.
	\begin{figure}[H]
		\centering 
		\includegraphics[width=0.7\textwidth]{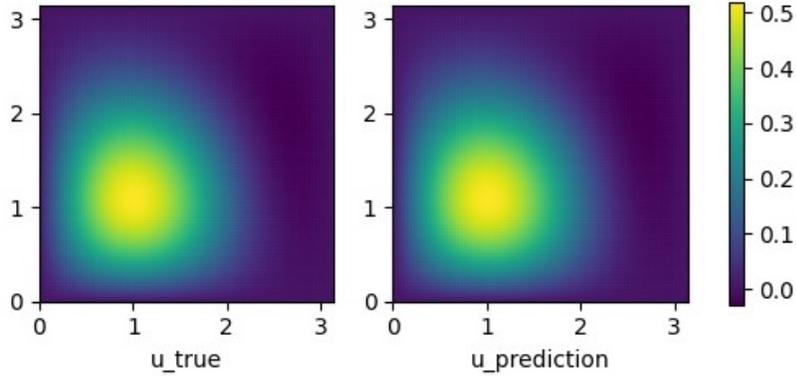}
		\caption{Ground truth solution and the prediction of  ReQU neural network with 3 layers. }
		\label{fig:3}
	\end{figure}

     \section{Discussion}
   In this section, we will discuss our results in terms of the dependence on the dimension of the reduced basis  $d(\tilde{\epsilon})$ and the suitability of the two assumptions \ref{4.1} and \ref{4.2}.  
   \subsection{Dependence on the Dimension of the Reduced Basis  $d(\tilde{\epsilon})$}
   To approximate parametric map 
   $$\mathcal{Y} \rightarrow \mathbb{R}^{d(\tilde{\epsilon})}: \quad y \mapsto \mathbf{u}_{y, \tilde{\epsilon}}^{\mathrm{rb}}$$
   with $\epsilon$ accuracy, by Theorem \ref{4.3} the number of non-zero weights of ReQU neural network required  is $\mathcal{O}(d^3(\tilde{\epsilon})\log^2_{2}(\log_{2}(1/\epsilon)),$ while the corresponding  known results is $\mathcal{O}(d(\tilde{\epsilon})^{3}  \log_{2}(1 / \epsilon))$ for $\operatorname{ReLU}$ network.  This implies a significant advantage of ReQU over ReLU in approximating the parametric map when $d(\tilde{\epsilon})\ll \log_{2}(\log_{2}(1/\epsilon)).$ We remark that low rank approximations by reduced basis is achievable under certain circumstance, especially when the parametric set $\mathcal{Y}$ is compact. For example, the authors in \cite{bachmayr2017kolmogorov} gave explicit low-rank representations when the diffusion coefficients are piecewise constant over a partition of the physical domain.
   
   We illustrate this by discussing our first numerical experiment on parametric diffusion coefficients and showing that $d(\Tilde{\epsilon})$ is relatively small in this case. Recall that $f(\mathbf{x}) = 20 + 10x_{1} -5 x_{2},$ for $\mathbf{x} = (x_1,x_2)\in \Omega= [0,1]^2$, which is independent of the parameters. And we parametrize the diffusion coefficient set $\{a_y: y\in \mathcal{Y}\}\subset \mathbb{R}^{p}$ for $p=s^2 \in \mathbb{N}$ as 
   \begin{equation*}
   	\mathcal{A}(p, \mu):=\left\{\mu+\sum_{i=1}^{p} y_{i} \mathcal{X}_{\Omega_{i}}: y \in \mathcal{Y}=[0,1]^{p}\right\},
   \end{equation*}
   where $\left(\Omega_{i}\right)_{i=1}^{p}$ forms a $s \times s$ chessboard partition of $(0,1)^{2}$, and $\mu>0$ is a fixed shift. In our numerical tests, we select shifts $\mu=10^{-1}$, $p=s^{2}$, and $s=3$, which yield $p=9$. In the interior of each $\Omega_{i}$, we have $$
   -\left(\mu+\sum_{i=1}^{p} y_{i} \mathcal{X}_{\Omega_{i}}\right) \Delta u_{y}(x)=20+10 x_{1}-5 x_{2}.
   $$
   Then, we can exactly solve $u_y$ as
   \begin{eqnarray*}
   	u_{y}=  -\frac{1}{\mu +y_{i}}&\Bigg[& 20\left(c_{1} \frac{x_{1}^{2}}{2}+d_{1} \frac{x_{2}^{2}}{2}\right)+10\left(c_{2} \frac{x_{1}^{3}}{6}+d_{2} \frac{x_{1} x_{2}^{2}}{2}\right)+5\left(c_{3} \frac{x_{1}^{2}}{2} x_{2}+d_{3} \frac{x_{2}^{3}}{6}\right)\\
   	&& +  c_4 x_1 x_2+c_5 x_1+c_6 x_2 + c_7\Bigg]
   \end{eqnarray*}
   where $c_i + d_i = 1$, for $i\in{1,2,3}.$ Thus, we obtain that
   $$
   \left.u_y\right|_{\Omega_{i}} \in \operatorname{span}\left\{x_{1}^{2}\mathcal{X}_{\Omega_{i}}, x_{2}^{2}\mathcal{X}_{\Omega_{i}}, x_{1}^{3}\mathcal{X}_{\Omega_{i}},  x_{1} x_{2}^{2} \mathcal{X}_{\Omega_{i}}, x_{1}^{2} x_{2} \mathcal{X}_{\Omega_{i}}, x_{2}^{3} \mathcal{X}_{\Omega_{i}}, x_1 x_2 \mathcal{X}_{\Omega_{i}}, x_1 \mathcal{X}_{\Omega_{i}}, x_2 \mathcal{X}_{\Omega_{i}}, \mathcal{X}_{\Omega_{i}} \right\}.
   $$
   This yields an upper bound of $10\times 9 = 90$ degrees of freedom for $u_y$. Since there are $24$ continuity conditions (independent of $y$ ) on $u$ at domain and subinterval boundaries, we have
   $$
   \operatorname{rank}(u) \leq 90 - 24 = 66
   $$
   and therefore $d(\tilde{\epsilon})\le  66$, independent of the particular choice of $\tilde{\epsilon}$. 

   \subsection{Suitability of Assumptions \ref{4.1} and \ref{4.2}}
   Indeed, the two assumptions  \ref{4.1} and \ref{4.2} hold if the maps $y \mapsto b_{y}(u, v)$ and $y \rightarrow f_{y}(v)$ are continuous for all $u, v \in \mathcal{H}$. 
   In this case, it is enough to show that ReQU neural networks can approximate continuous function. By \cite[Theorem3.1]{li2019better}, ReQU neural network can represent multivariate polynomial on $\mathbb{R}^{d}$ with no error. Thus, by the Weierstrass approximation theorem, assumption 5.1 and 5.2 can be achieved. 
   
   Next, we discuss about the required size for the ReQU-realizations of the maps $y \mapsto \mathbf{B}_{y, \tilde{\epsilon}}^{\mathrm{rb}}$ and $y \mapsto \mathbf{f}_{y, \tilde{\epsilon}}^{\mathrm{rb}}$, taking  our first numerical experiment for example again. We write the parametric diffusion equation 
   $$
   -\nabla \cdot\left(a_{y}(\mathbf{x}) \cdot \nabla u_{y}(\mathbf{x})\right)=f(\mathbf{x}), \quad \text { on } \Omega=(0,1)^{2},\left.\quad u_{y}\right|_{\partial \Omega}=0
   $$
   in its variation form
   $$
   B_{y}\left(u_{y}, v\right):=\int_{\Omega} \mu \nabla u_{y} \nabla v \mathrm{~d} \mathbf{x}+\sum_{i=1}^{p} y_{i} \int_{\Omega} \mathcal{X}_{\Omega_{i}} \nabla u_{y} \nabla v \mathrm{~d} \mathbf{x}=f(v), \quad \text { for all } v \in \mathcal{H}.
   $$
   By the definition of 	$\mathbf{B}_{y, \tilde{\epsilon}}^{\mathrm{rb}}:=\left(B_{y}\left(\psi_{j}, \psi_{i}\right)\right)_{i, j=1}^{d(\tilde{\epsilon})}$ and Lemma 2.2, we can derive
   \begin{equation*}
   	y_i =\mathbf{\beta}_{1}^{T} \sigma_{2}\left(\mathbf{\omega}_{1} y_i+\mathbf{\gamma}_{1}\right), 
   \end{equation*}
   where $\mathbf{\omega}_{1}=[1,-1,1,-1]^{T},\mathbf{\gamma}_{1}=[1,-1,-1,1]^{T}$ and $\mathbf{\beta}_{1}=\frac{1}{4}[1,1,-1,-1]^{T}$. Thus, we have for $\tilde{\epsilon}, \epsilon>0$,
   $$
   \begin{aligned}
   	&B_{L}(\tilde{\epsilon}, \epsilon)=2,\\
   	&F_{L}(\tilde{\epsilon}, \epsilon)=1,\\ 
   	& B_{M}(\tilde{\epsilon}, \epsilon) \leq \left(\left\|\omega_{1} \right\|_{0}+\left\|\gamma_{1}\right\|_{0}\right) p+(4p+1) d(\tilde{\varepsilon})^{2}=8p + (4p+1) d(\tilde{\varepsilon})^{2} , \\
   	& F_{M}(\tilde{\epsilon}, \epsilon) \leq p d(\tilde{\epsilon}).
   \end{aligned}
   $$
   Combining this observation with the
   statement of Theorem \ref{4.3}, we can conclude that the governing quantity in the obtained number of layers and complexity bound
   are given by $\mathcal{O}(\log^2_{2}(\log_{2}(1/\epsilon))$ and $\mathcal{O}(d^3(\tilde{\epsilon})\log^2_{2}(\log_{2}(1/\epsilon))$ respecitively.

	\section{Conclusion}
	
	In this paper, we give constructive proofs of complexity bounds for approximating parametric map by deep neural networks using ReQU function as the activation functions. In contrast to the previously established complexity-bound $\mathcal{O}\left(d^3\log_{2}^{q}(1/ \epsilon) \right)$ for ReLU neural networks, we derive an upper bound $\mathcal{O}\left(d^3\log_{2}^{q}\log_{2}(1/ \epsilon) \right)$ on the size of the deep ReQU neural network required to achieve accuracy $\epsilon>0$. This implies the better performance of deep ReQU neural networks in solving parametric partial differential equations.  In addition, we compare the practical performance of deep neural networks approximation based on ReLU versus ReQU on two parametric PDE models. These numerical experiments indicate that ReQU neural networks can achieve smaller errors than ReLU neural networks, at identical network architecture hyperparameters (depth and layer widths).  Furthermore, ReQU neural networks converge more quickly in the optimization and are more stable with respect to overfitting.
	
	 This work opens up many interesting directions  for future research. First of all, for both ReLU and ReQU based methods, the reduced basis approximation for model reduction on function spaces could be improved by adaptive data-driven approaches such as the kernel proper orthogonal decomposition (KPOD) and principal component analysis (PCA) techniques, see \cite{bhattacharya2021model,salvador2021non}. Furthermore, a limitation of this study is that our findings indicate ReQU neural networks  with sufficient layers and size can provide accurate approximations, but we do not know how to choose the ideal neural network architectures for the specific problem in applications. As a future work, we plan to conduct a comprehensive experiment  to investigate the theoretical setup of this paper, including using other special architectures like convolutional neural networks and generalizing to more general parametric problems. Finally, we  also want to apply our methodology  to more difficult PDE problems, such as complex PDE models in the sciences and in engineering and numerical solution of forward and inverse problems of parametric and stochastic PDEs.
    
     \paragraph{Acknowledgement}
	We would like to thank the two anonymous reviewers for their helpful and constructive comments on our work.
	\paragraph{Funding}
	The authors were in part supported by NSFC (Grant No. 11725102), National Support Program for Young Top-Notch Talents, and Shanghai Science and Technology Program (Project No. 21JC1400600, No. 19JC1420101, and No. 20JC1412700). 
	
  \paragraph{Data Availability}	
   The dataset  is available at \url{www.github.com/MoGeist/diffusion_PPDE}.
	
	\section*{Declarations}	
	 \paragraph{Competing interests} The authors have not disclosed any competing interests.

	
	\frenchspacing
	\bibliographystyle{plain}
	\bibliography{ref}
	
\end{document}